\documentclass[onefignum,onetabnum]{siamart171218}

\newcommand{\R}{\mathbb{R}} %
\newcommand{\N}{\mathbb{N}} %
\newcommand{\D}{\mathcal{D}} %
\newcommand{\U}{\mathcal{U}} %
\newcommand{\V}{\mathcal{V}} %
\newcommand{\It}{\mathcal{I}_t} %
\newcommand{\St}{\mathcal{S}_t} %
\newcommand{\Uad}{\mathcal{U}_{\mathrm{ad}}} 
\newcommand{\Vad}{\mathcal{V}_{\mathrm{ad}}} 
\newcommand{\Vref}{\bar{\mathcal{V}}} 
\newcommand{\dom}{\bar{D}} 
\newcommand{\intd}{\mathrm{d}} %
\newcommand{\dx}{\intd x}
\newcommand{\dt}{\intd t}
\newcommand{\ds}{\intd s}
\newcommand{\xs}{x_0} 
\newcommand{\el}{\ell} 
\newcommand{\x}{\bar{x}} 
\newcommand{\y}{\bar{y}}
\newcommand{\urefinit}{\bar{u}} 
\newcommand{\w}{\bar{w}}
\newcommand{\lambdamax}{\lambda_{\mathrm{max}}}
\newcommand{\Sref}{\bar{\mathcal{S}}} 
\newcommand{\pcont}{PC^0(\dom)} 
\newcommand{\pdiff}{PC^1(\dom)}



\usepackage{amsfonts}
\usepackage{graphicx}
\usepackage{epstopdf}
\usepackage{algorithmic}
\usepackage{mathtools}
\usepackage{amssymb}
\usepackage{amsmath}
\usepackage{commath}

\ifpdf
  \DeclareGraphicsExtensions{.eps,.pdf,.png,.jpg}
\else
  \DeclareGraphicsExtensions{.eps}
\fi


\newsiamremark{remark}{Remark}
\newsiamremark{hypothesis}{Hypothesis}
\newsiamremark{assumption}{Assumption}
\crefname{hypothesis}{Hypothesis}{Hypotheses}
\newsiamthm{claim}{Claim}

\headers{A Variational Calculus for the GRP}{Jannik Breitkopf, Stefan Ulbrich}

\title{A Variational Calculus for 
	Optimal Control of the Generalized Riemann Problem for 
	Hyperbolic Systems of Conservation Laws\thanks{\textbf{Funding:}
This work was supported by the German Research Foundation (DFG) within 
the collaborative research center TRR 154 ``Mathematical modeling, simulation and optimization using
the example of gas networks``, project A02.}}


\author{Jannik Breitkopf\thanks{Department of Mathematics, TU Darmstadt, Germany
		(\email{breitkopf@mathematik.tu-darmstadt.de}, \email{ulbrich@mathematik.tu-darmstadt.de}).}
\and Stefan Ulbrich\footnotemark[2]}

\usepackage{amsopn}

\makeatletter
\newcommand*{\addFileDependency}[1]{
  \typeout{(#1)}
  \@addtofilelist{#1}
  \IfFileExists{#1}{}{\typeout{No file #1.}}
}
\makeatother


\ifpdf
\hypersetup{
  pdftitle={A Variational Calculus for 
  	Optimal Control of the Generalized Riemann Problem for 
Hyperbolic Systems of Conservation Laws},
  pdfauthor={Jannik Breitkopf, and Stefan Ulbrich}
}
\fi




\begin{document}

\maketitle

\begin{abstract}
	We develop a variational calculus for 
	entropy solutions of the Generalized Riemann Problem (GRP)
	for strictly hyperbolic systems of conservation laws
	where the control is the initial state.
	The GRP has a discontinuous initial state with exactly one
	discontinuity and continuously differentiable ($C^1$) states left and right of it.
	The control consists of the $C^1$ parts of the initial state and
	the position of the discontinuity. 
	Solutions of the problem are generally discontinuous since they contain shock curves.
	We assume the time horizon $T>0$ to be sufficiently small such that no
	shocks interact and no new shocks are generated.
	Moreover, we assume that no rarefaction waves occur and that
	the jump of the initial state is sufficiently small.
	
	Since the shock positions depend on the control, a transformation
	to a reference space is used
	to fix the shock positions.
	In the reference space, we prove that the solution of the GRP
	between the shocks is continuously differentiable from the control space to $C^0$.
	In physical coordinates, this implies that the shock curves in $C^1$
	and the states between the shocks in the topology of $C^0$ 
	depend continuously differentiable on the control.
	As a consequence, we obtain the differentiability of 
	tracking type objective functionals. 
\end{abstract}

\begin{keywords}
  hyperbolic systems of conservation laws,
  shock curves,
  generalized riemann problem, optimal control, variational calculus
\end{keywords}

\begin{AMS}
  35L45, 
  35L65, 
  35L67, 
  49J20, 
  49J50  
\end{AMS}

\section{Introduction}
We consider optimal initial-value control problems for
entropy solutions of the Generalized Riemann Problem (GRP)
for strictly hyperbolic systems of balance laws of the form
\begin{equation} \label{GRP} \tag{GRP}
\begin{aligned}
y_t + f(y)_x &= g(\cdot ,y) && \text{on } \D \subset [0,T] \times \R , \\
y (0, \cdot) &= u_0 && \text{on } [-\el,\el].
\end{aligned}
\end{equation}
We assume that $g \in C^{1,1}(\D \times \R^n ; \R^n)$ and $f \in C^{2,1}(\R^n; \R^n)$.
The initial state is piecewise smooth
with one discontinuity located at $x = x_0$, i.e.,
\begin{equation} \label{initial_data_GRP}
u_0(x) = \left\{
    \begin{array}{ll}
        u_l(x) & \text{if } x \in [-\el, x_0), \\
        u_r(x) & \text{if } x \in (x_0, \el].
    \end{array} \right. 
\end{equation}
For a suitable open subset $\Vad \subset \V \coloneqq C^1 ( [-\el, \varepsilon]; \R^n ) 
\times C^1 ( [-\varepsilon, \el]; \R^n )$, the control is
\begin{equation*} 
u = (u_l, u_r, \xs) \in \Uad \coloneqq \Vad \times (-\varepsilon, \varepsilon) .
\end{equation*}
We consider for suitable $a<b$ with $\{ T \} \times [a,b] \subset \D$ the objective functional 
\begin{equation} \label{objective_fcntal}
u \in \Uad \mapsto J(u) \coloneqq \int_a^b \Phi \left( y(T, x), y_d(x) \right) \dx
\quad \text{subject to } y \text{ solves \eqref{GRP}}
\end{equation}
with $\Phi \in C^{1,1}(\R^n \times \R^n)$ and a desired state $y_d \in BV([a,b];\R^n)$.

The aim of this paper is to develop a variational calculus for the dependence of solutions
of \eqref{GRP} on the control $u_0$. In particular, this will imply
the differentiability of the objective functional \eqref{objective_fcntal}.

\textbf{Assumptions.} The problem \eqref{GRP} is assumed to be strictly hyperbolic, i.e.,
the Jacobian matrix $A(y) = f'(y)$ has real and distinct eigenvalues
$\lambda_1(y) < \ldots < \lambda_n(y)$ for all $y \in \R^n$.
All eigenvalues and eigenvectors of $A$ are assumed to be bounded in $C^{1,1}$. 
We assume that there are $\lambdamax > 0$ 
and disjoint intervals $\Lambda_i = [\lambda_i^-, \lambda_i^+]$ with
\begin{equation} \label{lambda_ass}
	 | \lambda_i(y) | \leq \lambdamax, \quad
	 \lambda_i(y) \in \Lambda_i \qquad
	 \forall y \in \R^n, \ i \in \{1, \ldots, n \} .
\end{equation}
We denote by $\eta_{\min} = \min_{i=1}^{n-1} ( \lambda_{i+1}^- - \lambda_i^+) > 0$
the minimal difference of the eigenvalues.
With $r_i(y), l_i(y) \in \R^n$ we denote the right and left 
eigenvectors of $A(y)$, precisely
\begin{equation} \label{evec_normalization}
A(y)r_i(y) = \lambda_i(y) r_i(y), \quad l_i(y)^\top A(y) = \lambda_i(y) l_i(y)^\top.
\end{equation}
Moreover, we assume $|r_i(y)| \equiv 1$ and
$l_i(y) ^\top r_j(y) = \delta_{ij}$ with the Kronecker delta $\delta_{ij}$.

The underlying domain of \eqref{GRP} is assumed to be
\begin{equation} \label{domain_physical}
\D = \left\{ (t,x) \in [0,T] \times \R \ | \
-\el + t \lambdamax \leq x \leq \el - t \lambdamax \right\}
\end{equation}
for a small time horizon $T>0$.
Note that this ensures that any solution of \eqref{GRP} on $\D$
is fully determined by the initial values in $t=0$ and no
boundary data is required.

\textbf{Related literature.} For scalar conservation laws, 
differentiability properties of entropy solutions w.r.t.\ the initial state
are investigated, e.g., in \cite{BressanGuerra1997, BJ1999}.
In \cite{Ulbrich2002}, general variations of the smooth parts
and positions of the discontinuities of the initial state are considered,
in particular implying the differentiability of the objective functional \eqref{objective_fcntal}.
An adjoint gradient representation is derived in \cite{Ulbrich2003}.
The results are extended to initial boundary value problems in \cite{PfaffUlbrich2015, SchmittUlbrich2021}.

For systems of conservation laws, 
existence results for entropy solutions with discontinuous initial states 
exist within the class of $BV$-functions \cite{Bressan2005} and
for piecewise smooth solutions \cite{LiYu1985}. 
A variational calculus for piecewise Lipschitz continuous solutions
w.r.t.\ directional variations of the initial data is given in \cite{BressanMarson1995}
and corresponding optimality conditions are derived in \cite{BressanShen2007}.
Another approach by horizontal shifts of initial controls is proposed in \cite{Bianchini2000}.
A formal sensitivity and adjoint computation for systems is carried out in \cite{BardosPironneau2005}
for the case of piecewise smooth solutions with exactly one shock curve.
Here, it is assumed that the shock and the solution on both sides of the shock
depends differentiably on a parameter.

\textbf{Notation.}
If a map $f : U \longrightarrow V$ is Fr\'echet differentiable, 
we denote the Fr\'echet derivative in $u_0 \in U$ by $d_u f(u_0) \in \mathcal{L}(U, V)$.
With $\mathbf{1}_{[a,b]}$ we denote the indicator function on the interval $[a,b]$, i.e.,
$\mathbf{1}_{[a,b]}(x) = 1$ if $x \in [a,b]$ and $\mathbf{1}_{[a,b]}(x) = 0$ otherwise.
We denote $[n] \coloneqq \{1, \ldots, n\}$ and $[n]_0 \coloneqq \{0, \ldots, n\}$.
For brevity, we drop indices for norm bounds of
quantities indexed by the characteristic fields.
For instance, $\| l \| = \max_{i} \sup_{y \in \R^n} | l_i(y) |$.

\textbf{Organization.} In \cref{sec:sol_existence}, we derive existence and stability
results for solutions of the GRP. We define the physically relevant
type of solutions in \cref{sec:entropysolution}.
Based on the Riemann Problem, we define the set of admissible controls 
in \cref{sec:riemannproblem} and construct a fixed space-time domain 
in \cref{sec:refspace} which serves as a reference space.
\Cref{sec:slproblems} is concerned with existence and stability properties
of the semilinear problem and \cref{sec:qlproblems} with the quasilinear problem.

The main results of this paper are in \cref{sec:diff}.
We prove the differentiability of the solution operator in the reference space
in \cref{sec:diffref} and deduce further differentiability properties in physical coordinates
in \cref{sec:diffphys}.

\section{Existence and Stability Properties of Discontinuous Solutions}
\label{sec:sol_existence}
In this section, we prove existence and stability properties
of discontinuous solutions of semi- and quasilinear hyperbolic
systems of conservation laws.

A crucial structural feature of hyperbolic
conservation laws is that information is transported along
characteristic curves. 
Consider a domain $\Omega \subset [0,T] \times \R$ and suppose there are
$y, z \in C^1(\Omega ; \R^n)$ such that it holds in the classical sense that
\begin{equation} \label{standard_pde}
y_t + A(z) y_x = g(\cdot, y) \quad \text{on } \Omega .
\end{equation}
Multiplying with $l_i(z)$ yields for 
the characteristic variables $y_i \coloneqq l_i(z)^\top y$ that
\begin{equation} \label{characteristic_form}
(y_i)_t + \lambda_i(z) (y_i)_x = l_i(z)^\top g(\cdot, y) 
+ \left( l_i(z)_t + \lambda_i(z) l_i(z)_x \right) ^\top y .
\end{equation}
Let $\xi_i = \xi(\cdot; t,x,z)$ be the solution of the ordinary differential equation
\begin{equation} \label{char_curve}
\xi_i'(s) = \lambda_i \left( z(s, \xi_i(s) \right), \quad
\xi_i(t) = x. 
\end{equation}
Then, \eqref{characteristic_form} shows that a solution of \eqref{standard_pde} satisfies
\begin{equation} \label{ichardeq}
\frac{\mathrm{d}}{\dt} y_i(t, \xi_i(t)) =
\left[ l_i(z)^\top g(\cdot, y) + \left( l_i(z)_t + \lambda_i(z) l_i(z)_x \right) ^\top y \right] (t, \xi_i(t)).
\end{equation}

\subsection{Entropy Solutions and Jump Condition} \label{sec:entropysolution}
In general, weak solutions of \eqref{GRP} are not unique,
see, e.g., \cite{Bressan2005, Smooller1983}. 
To select the physically relevant solution,
the \emph{Lax Entropy Condition} \cite{Lax1957} is imposed along shock curves.

\begin{definition} \label{def:entropysolution}
A function $y \in BV(\D ; \R^n)$ is an \emph{entropy solution} of \eqref{GRP} if
\begin{enumerate}
\item $y$ is a weak solution, i.e., $\mathrm{ess \ lim}_{t \searrow 0} 
\| y(t, \cdot) - u_0 \|_{L^1([-\ell, \ell]; \R^n)} = 0 $ and 
\begin{equation*}
\int_{\D} ( y^\top \phi_t + f(y)^\top \phi_x 
+ g(\cdot, y)^\top \phi ) \ \mathrm{d} (t,x)
= 0 \quad \forall \phi \in C_c^{\infty}(\D;\R^n) .
\end{equation*}    
\item If $y$ is discontinuous along a curve $t \mapsto (t, \xi(t))$, the left
and right sided limits $y^{\pm}(t) = y(t, \xi(t) \pm))$
exist, and for some $j \in [n]$ 
it holds for a.e. $t \in [0,T]$ that 
\begin{equation} \label{entropy_cond_speeds}
\lambda_j(y^- ) \geq \xi' \geq \lambda_j(y^+ ), \quad \lambda_k(y^{\pm} ) > \xi' > \lambda_i(y^{\pm} ) \quad \forall i < j < k.
\end{equation}
\end{enumerate}    
\end{definition}

A weak solution of \eqref{GRP} satisfies the Rankine--Hugoniot jump condition.
\begin{lemma}
With the notation of \cref{def:entropysolution},
let $y \in BV(\D ; \R^n)$ be a weak solution of \eqref{GRP}
and assume it is discontinuous along $t \mapsto (t, \xi(t))$.
Then it holds 
\begin{equation} \label{RH_cond}
\dot{\xi}(t) \left( y^+(t) - y^-(t) \right) 
= f \left( y^+(t) \right) - f \left( y^-(t) \right) \quad \forall t \in [0,T].
\end{equation}
\end{lemma}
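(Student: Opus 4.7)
The plan is to test the weak formulation against a sequence of smooth test functions concentrated in a tubular neighbourhood of the shock curve, apply the divergence theorem on each side, and extract the jump relation by invoking the fundamental lemma of the calculus of variations. The conceptual picture is that the distributional derivative of $(y, f(y))$ with respect to $(t,x)$ is a bounded measure whose absolutely continuous part equals $g(\cdot,y)$ on the smooth region and whose jump part is concentrated on the shock curve with density $\dot{\xi}\,[y] - [f(y)]$; the weak formulation forces this jump part to vanish.

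First, I would localize. Fix $t_0 \in (0,T)$ and a neighbourhood $U \subset \D$ of $(t_0, \xi(t_0))$ so small that the shock curve divides $U$ into two open sets $U^- = \{(t,x) \in U : x < \xi(t)\}$ and $U^+ = \{(t,x) \in U : x > \xi(t)\}$ on each of which $y$ is smooth (in the present GRP setting this is granted; for general BV-solutions one mollifies $y$ on each side and passes to the limit, using that BV-functions possess one-sided $L^1$-traces on Lipschitz curves). For any test function $\phi \in C_c^\infty(U; \R^n)$, split the weak formulation
\[
\int_{U^-} + \int_{U^+} \bigl( y^\top \phi_t + f(y)^\top \phi_x + g(\cdot, y)^\top \phi \bigr) \, \intd(t,x) = 0.
\]
On each side, integrate by parts via the divergence theorem applied to the field $(y\phi, f(y)\phi)$. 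The interior contributions cancel against $g(\cdot,y)^\top \phi$ because $y$ satisfies the PDE classically on $U^\pm$; only boundary integrals along the shock survive, with outward normal $(\dot\xi, -1)/\sqrt{1 + \dot\xi^2}$ for $U^-$ and opposite sign for $U^+$.

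Collecting boundary terms and using the traces $y^\pm(t) = y(t, \xi(t)\pm)$, the identity reduces to
\[
\int_0^T \bigl[ \dot\xi(t)\bigl(y^+(t) - y^-(t)\bigr) - \bigl(f(y^+(t)) - f(y^-(t))\bigr) \bigr]^\top \phi(t, \xi(t)) \, \dt = 0.
\]
Since $\phi \in C_c^\infty(U;\R^n)$ is arbitrary and its restriction to the shock curve ranges over an arbitrary smooth compactly supported function of $t$, the fundamental lemma of the calculus of variations yields \eqref{RH_cond} for a.e.\ $t$ in a neighbourhood of $t_0$, and hence on $[0,T]$ by a covering argument.

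The main obstacle is the regularity step justifying the use of the divergence theorem given only $y \in BV$. A clean approach is to regularize $y$ by standard mollification applied separately on $U^+$ and $U^-$, verify that the mollified fields solve a perturbed version of \eqref{standard_pde} with error vanishing in $L^1_{\mathrm{loc}}$, apply the divergence theorem to the smooth approximants, and pass to the limit using the $L^1$-convergence of traces guaranteed by BV-theory. Alternatively, one may cite the Vol'pert chain rule or the theory of divergence-measure fields, which provides the jump identity directly, after which the arbitrariness of $\phi$ gives \eqref{RH_cond}.
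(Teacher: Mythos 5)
Your argument is the standard test-function/divergence-theorem derivation of the Rankine--Hugoniot condition, which is precisely the proof of Theorem 4.2 in Bressan's book that the paper cites in lieu of giving its own proof; it is correct, including your honest treatment of the $BV$-regularity issue via one-sided traces. (Minor slip: the vector $(\dot\xi,-1)/\sqrt{1+\dot\xi^2}$ you assign to $U^-$ is its \emph{inward} normal, but since the resulting bracket is equated to zero this sign does not affect the conclusion.)
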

A proof can be found in \cite[Theorem 4.2]{Bressan2005}.
The condition \eqref{RH_cond} can be equivalently
reformulated in terms of the jump of $l_i(y)^\top y$.
Let $A(y_1, y_2) = \int_0^1 A \left( sy_1 + (1-s) y_2 \right) \ds$
with eigenvalues $\lambda_i(y_1, y_2)$
and eigenvectors $r_i(y_1, y_2)$ and $l_i(y_1, y_2)$,
normalized as in \eqref{evec_normalization}.
Note that $A(y_1, y_2)$ is strictly hyperbolic.

\begin{lemma} \label{lem:GRP_jump_equivalent}
	Assume a solution $y \in BV(\D ; \R^n)$ of \eqref{GRP}
	is discontinuous in $(t, \xi(t))$ in all $t \in [0,T]$
	for a curve $\xi \in C^1([0,T])$.
	Let $y^{\pm}(t) = y(t, \xi(t) \pm))$ and assume that $| y^+(t) - y^-(t) |$ is sufficiently small.
	Then \eqref{RH_cond} holds if and only if there is $j \in [n]$
	such that for all $i \neq j$ it holds on $[0,T]$ that
	\begin{equation} \label{GRP_jump}
	l_i(y^+)^\top y^+ = 
	l_i(y^+)^\top y^-	
	- \sum_{k \neq i} \frac{ l_i ( y^+ , y^- ) ^\top r_k(y^+)}
	{l_i ( y^+ , y^- ) ^\top r_i(y^+)} l_k(y^+)^\top (y^+ -y^-) .
	\end{equation}
\end{lemma}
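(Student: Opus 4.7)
The plan is to reduce the Rankine--Hugoniot condition to a linear algebraic statement about $y^+-y^-$ being an eigenvector of $A(y^+,y^-)$, and then translate this statement into coordinates with respect to the basis of eigenvectors at $y^+$.

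First I would rewrite the flux difference using the fundamental theorem of calculus:
\begin{equation*}
f(y^+) - f(y^-) = \int_0^1 f'(sy^+ + (1-s)y^-)\,\mathrm{d}s \cdot (y^+ - y^-) = A(y^+, y^-)(y^+ - y^-).
\end{equation*}
Hence \eqref{RH_cond} is equivalent to the eigenvalue equation
\begin{equation*}
\bigl(A(y^+, y^-) - \dot\xi\, I\bigr)(y^+ - y^-) = 0 \qquad \forall t \in [0,T].
\end{equation*}
Since $A(y^+,y^-)$ is strictly hyperbolic with eigenvalues $\lambda_i(y^+,y^-)$ and right/left eigenvectors $r_i(y^+,y^-),\, l_i(y^+,y^-)$ normalized by $l_i^\top r_j=\delta_{ij}$, this is equivalent to the existence of some $j\in[n]$ with $\dot\xi = \lambda_j(y^+,y^-)$ and
\begin{equation} \label{eq:plan_orth}
l_i(y^+, y^-)^\top (y^+ - y^-) = 0 \quad \forall i \neq j.
\end{equation}

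Next I would change basis from $\{r_k(y^+,y^-)\}$ to $\{r_k(y^+)\}$ by expanding
\begin{equation*}
y^+ - y^- = \sum_{k=1}^n \bigl[l_k(y^+)^\top(y^+ - y^-)\bigr]\, r_k(y^+),
\end{equation*}
and then applying $l_i(y^+,y^-)^\top$ to both sides. For $i\neq j$, combining with \eqref{eq:plan_orth} yields
\begin{equation*}
\sum_{k=1}^n l_i(y^+, y^-)^\top r_k(y^+)\, l_k(y^+)^\top(y^+ - y^-) = 0.
\end{equation*}
Isolating the term $k=i$ and dividing by $l_i(y^+,y^-)^\top r_i(y^+)$ gives exactly \eqref{GRP_jump} after rearranging $l_i(y^+)^\top(y^+-y^-) = l_i(y^+)^\top y^+ - l_i(y^+)^\top y^-$. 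The converse direction runs the same computation backwards: \eqref{GRP_jump} for all $i\neq j$ recovers $l_i(y^+,y^-)^\top(y^+-y^-)=0$ for $i\neq j$, which forces $y^+-y^-$ to be a scalar multiple of $r_j(y^+,y^-)$, and hence an eigenvector of $A(y^+,y^-)$; choosing $\dot\xi = \lambda_j(y^+,y^-)$ then recovers \eqref{RH_cond}.

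The only delicate point, and the main use of the small-jump hypothesis, is that the division by $l_i(y^+, y^-)^\top r_i(y^+)$ must be admissible. As $|y^+-y^-|\to 0$, continuity of the eigenvectors together with the normalization \eqref{evec_normalization} gives $l_i(y^+,y^-)^\top r_i(y^+) \to l_i(y^+)^\top r_i(y^+) = 1$, so by choosing the bound on $|y^+-y^-|$ small enough (uniformly in $y^+$ thanks to the global $C^{1,1}$ bounds on the eigenstructure stated in the assumptions), the denominator stays bounded away from zero. This is the only quantitative estimate needed; all other steps are linear algebra.
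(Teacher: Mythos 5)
Your proposal is correct and follows essentially the same route as the paper's proof: reformulating \eqref{RH_cond} as the eigenvalue equation for $A(y^+,y^-)$, passing to the orthogonality conditions $l_i(y^+,y^-)^\top(y^+-y^-)=0$ for $i\neq j$, and expanding $y^\pm$ in the eigenbasis at $y^+$ to rearrange into \eqref{GRP_jump}. Your explicit justification that the small-jump hypothesis keeps the denominator $l_i(y^+,y^-)^\top r_i(y^+)$ bounded away from $1$'s neighborhood of zero is a detail the paper leaves implicit, but it does not change the argument.
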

\begin{proof}
The matrix $A(y^-, y^+)$ is also strictly hyperbolic
and \eqref{RH_cond} is equivalent to
\begin{align}
\dot{\xi}(t) & = \lambda_j ( y^+(t) , y^-(t) ), \label{rh_speed} \\
y^+(t) - y^-(t) & = c \, r_j ( y^+(t) , y^-(t) )  \label{jump_revec}
\end{align}
for some $j \in [n]$ and $c \in \R$.
Therefore, \eqref{jump_revec} is equivalent to 
\begin{equation} \label{shock_orthogonal}
l_i ( y^+ , y^- ) ^\top ( y^+ - y^- ) = 0
\qquad \forall i \neq j.
\end{equation}
Inserting $y^{\pm} = \sum_{k=1}^n (l_k(y^+)^\top y^{\pm} ) r_k(y^+)$ 
in \eqref{shock_orthogonal} and rearranging it yields \eqref{GRP_jump}
for all $i \neq j$.
Conversely, if \eqref{GRP_jump} is satisfied for all $i \neq j$,
then also \eqref{shock_orthogonal} holds.
\end{proof}

\begin{remark} \label{rem:solution_shifted}
For our later considerations, it is favorable
if the state $y$ is shifted by a constant additive term
such that it only attains values around zero.
This is mainly due to \eqref{shock_orthogonal} having
large Lipschitz constants w.r.t.\ $y^-, y^+$ if $y$ is large.

For $c \in \R^n$, define a new flux function $\hat{f}(y) \coloneqq f(y + c)$
and source term $\hat{g}(\cdot, y) \coloneqq g(\cdot, y+c)$.
If $y \in BV(\D; \R^n)$ is the entropy solution of \eqref{GRP}
for a control $(u_l, u_r, \xs) \in \Uad$, 
then $\hat{y} \coloneqq y - c$ is the entropy solution of
\begin{equation}
\hat{y}_t + \hat{f}(\hat{y})_x = \hat{g}(\cdot ,\hat{y}), 
\quad \hat{y}(0, \cdot) = \hat{u}_0
\end{equation}
with the initial state 
$\hat{u}_0 = \mathbf{1}_{[-\el, x_0)} \hat{u}_l + \mathbf{1}_{(x_0, \el]} \hat{u}_r$
obtained by setting $\hat{u}_l = u_l - c$ and $\hat{u}_r = u_r - c$.
The new set of controls is 
$\hat{\Uad} = \{ (u_l-c, u_r-c , x_0) \ | \ (u_l, u_r , x_0) \in \Uad \}$.
Moreover, the objective functional \eqref{objective_fcntal}
can be analogously transformed by using the function 
$\hat{\Phi} ( y, y_d ) \coloneqq \Phi ( y + c, y_d +c)$ instead of $\Phi$.
This shows that we can assume without restriction that the initial
states only attain values near zero.
\end{remark}

\subsection{Admissible Controls} \label{sec:riemannproblem}
In this section, we define the set of admissible controls $\Uad$.
This depends on the structure of solutions of the Riemann Problem
\begin{equation} \label{RP} 
\begin{aligned}
y_t + f(y)_x = 0 \quad \text{on } [0,T] \times \R,  \qquad
y (0, \cdot) = u_0 = \left\{
    \begin{array}{ll}
        u_L & \text{if } x<0, \\
        u_R & \text{if } x>0,
    \end{array} \right.  
\end{aligned} 
\end{equation}
where $u_L \neq u_R \in \R^n$.
The existence of entropy solutions for \eqref{RP} can be shown
under the following standard assumptions on the flux function $f$, see, e.g., \cite{Bressan2005}. 

\begin{definition}
If $\nabla \lambda_i(y) ^\top r_i(y) \neq 0$ for all $y \in \R^n$,
the $i$-th characteristic field is \emph{genuinely nonlinear}.
If $\nabla \lambda_i(y) ^\top r_i(y) = 0$ for all $y \in \R^n$,
it is \emph{linearly degenerated}.
\end{definition}

\begin{theorem} \label{thm:thm_sol_RP}
Assume that all characteristic fields $i \in [n]$ are either
genuinely nonlinear or linearly degenerated. 
If $|u_L - u_R|$ is sufficiently small, 
there exists a unique entropy solution $\hat{y}$ of \eqref{RP}.
Moreover, there are $\hat{y}_1, \ldots, \hat{y}_{n-1} \in \R^n$, 
$\mathcal{R} \dot{\cup} \mathcal{S} = [n]$, 
speeds $s_1^- \leq s_1^+ < \ldots < s_n^- \leq s_n^+$ 
with $s_i^- = \lambda_j(\hat{y}_{i-1}) < s_i^+ = \lambda_j(\hat{y}_{i})$ 
for $i \in \mathcal{R}$ and 
$s_i \coloneqq s_i^- = s_i^+ = \lambda_j(\hat{y}_{i-1}, \hat{y}_{i})$ 
for $i \in \mathcal{S}$, and $\Phi_j \in C^1([s_j^-, s_j^+];\R^n)$
for $i \in \mathcal{R}$, such that
\begin{equation} \label{sol_RP}
\hat{y} (t, x) = \left\{
    \begin{array}{lll}
        \hat{y}_0 = u_L & \text{if } x/t < s_1^-,\\
        \hat{y}_j & \text{if } s_{i}^+ < x/t < s_{i+1}^-, & \forall i \in [n-1], \\
        \Phi_j(x/t) & \text{if } s_{i}^- < x/t < s_{i}^+, & \forall i \in \mathcal{R}, \\
        \hat{y}_n = u_R & \text{if } x/t > s_n^+ . \\
    \end{array} \right. 
\end{equation}
\end{theorem}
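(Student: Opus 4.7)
The plan is to follow the classical Lax wave-curve construction combined with the implicit function theorem. First, for each family $i \in [n]$ and base state $u$ in a neighborhood of the reference value, I construct a $C^2$ wave curve $\sigma \mapsto \Psi_i(u, \sigma)$ parametrizing the states reachable from $u$ by a single admissible $i$-wave. If the $i$-th field is genuinely nonlinear, then $\Psi_i(u, \sigma)$ is the integral curve $v'=r_i(v)$, $v(0)=u$, reparametrized so that $\lambda_i$ is increasing, for $\sigma \geq 0$, and the $i$-th Hugoniot locus selected by the Lax inequalities \eqref{entropy_cond_speeds} for $\sigma < 0$; a Taylor expansion of the Rankine--Hugoniot relation \eqref{RH_cond} shows the two branches glue to second order at $\sigma=0$. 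If the $i$-th field is linearly degenerate, integral curve and Hugoniot locus coincide and produce contact discontinuities. In all cases $\partial_\sigma \Psi_i(u, 0) = r_i(u)$.

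Second, I define the composite map
\[ \Psi(u_L; \varepsilon_1, \ldots, \varepsilon_n) \coloneqq \Psi_n(\cdot, \varepsilon_n) \circ \cdots \circ \Psi_1(u_L, \varepsilon_1). \]
Its $\varepsilon$-Jacobian at $\varepsilon=0$ is $[r_1(u_L) \mid \cdots \mid r_n(u_L)]$, which is nonsingular by strict hyperbolicity. The implicit function theorem then yields, for $|u_R - u_L|$ sufficiently small, unique parameters $\varepsilon_i$ with $\Psi(u_L; \varepsilon) = u_R$. Set $\hat{y}_0 = u_L$, $\hat{y}_i = \Psi_i(\hat{y}_{i-1}, \varepsilon_i)$, and $\hat{y}_n = u_R$; assign $i \in \mathcal{R}$ if $\varepsilon_i$ sits on the rarefaction branch of a genuinely nonlinear family and $i \in \mathcal{S}$ otherwise.

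Third, I assemble the self-similar solution $\hat{y}(t,x) = \tilde{y}(x/t)$ by concatenating the constant states $\hat{y}_i$ on sectors $s_i^+ < x/t < s_{i+1}^-$, the discontinuities at speeds $s_i = \lambda_i(\hat{y}_{i-1}, \hat{y}_i)$ for $i \in \mathcal{S}$, and the rarefaction profiles $\Phi_i$ on fans $s_i^- < x/t < s_i^+$ obtained by reparametrizing the integral curve of $r_i$ by $\lambda_i$, which is feasible precisely because $\nabla \lambda_i^\top r_i \neq 0$ in the genuinely nonlinear case. The disjoint-interval assumption \eqref{lambda_ass} together with smallness of the $\varepsilon_i$ enforces the ordering $s_1^- \leq s_1^+ < s_2^- \leq \cdots < s_n^+$. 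Verification that $\hat{y}$ is a weak solution is immediate on constant sectors, follows from the rarefaction ODE $\Phi_i'(s) = r_i(\Phi_i(s))/\bigl(\nabla \lambda_i(\Phi_i(s))^\top r_i(\Phi_i(s))\bigr)$ on the fans, and from Rankine--Hugoniot across each shock or contact; the Lax condition \eqref{entropy_cond_speeds} is built into Step 1. For uniqueness, any self-similar entropy solution decomposes into $n$ admissible $i$-waves separating constant states and thus determines a tuple $\varepsilon$ with $\Psi(u_L; \varepsilon) = u_R$; local injectivity from the implicit function theorem forces the same $\varepsilon$ and hence the same solution.

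The principal technical obstacle lies in Step 1: verifying that the shock branch of $\Psi_i$ matches the rarefaction branch to second order at $\sigma=0$, and that the Lax inequalities select precisely the compressive ($\sigma<0$) side. Both rely essentially on the genuine-nonlinearity hypothesis $\nabla \lambda_i(u)^\top r_i(u) \neq 0$, which renders $\lambda_i(\Psi_i(u, \sigma))$ strictly monotone in $\sigma$ near $0$; the remaining steps are essentially bookkeeping with the implicit function theorem and the Rankine--Hugoniot condition.
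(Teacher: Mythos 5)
Your proposal is the classical Lax wave-curve construction (local wave curves with $C^2$ gluing of shock and rarefaction branches, concatenation, implicit function theorem, assembly of the self-similar solution), which is exactly the argument the paper invokes: its proof consists of a citation to \cite[Theorem 5.3]{Bressan2005} together with a summary of the maps $\Psi_i$ and $\Lambda$ and the implicit-function-theorem step. Your write-up is simply a more detailed version of the same route, so it is correct and takes essentially the same approach.
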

If $j \in \mathcal{S}$ and the $j$-th characteristic field
is genuinely nonlinear, the solution $\hat{y}$ contains
a $j$-shock curve. If the field is linearly degenerated,
the solution contains a contact discontinuity.
If $j \in \mathcal{R}$, it contains a $j$-rarefaction wave.
\begin{proof}
The detailed proof can be found in \cite[Theorem 5.3]{Bressan2005}.
We only give a short summary since the notation and construction
of $\hat{y}$ is necessary for our further considerations.
It can be shown that for all characteristic fields $i \in [n]$
and any left state $u_0 \in \R^n$ there exists a function
$\Psi_i(\cdot)(u_0) \in C^2(\R; \R^n)$ with $\Psi_i(0)(u_0) = u_0$
and the following properties.
If the $i$-th field is genuinely
nonlinear, any states $\Psi_i(\sigma)(u_0)$ for $\sigma \geq 0$ can be
connected to $u_0$ be an $i$-rarefaction wave. 
All states $\Psi_i(\sigma)(u_0)$ for $\sigma < 0$ can be
connected to $u_0$ be an $i$-shock curve satisfying the 
Rankine-Hugoniot jump condition \eqref{RH_cond}.
If the $i$-th field is linearly degenerated,
any state $\Psi_i(\sigma)(u_0)$ for $\sigma \in \R$ can be connected
to $u_0$ by a contact discontinuity.

Concatenation of the maps $\Psi_i$ for $i \in [n]$ defines
$\Lambda(\cdot)(u_L) : \R^n \longrightarrow \R^n$ by
\begin{equation} \label{implicit_map_RP}
\omega_0 = u_L, \quad \omega_i = \Psi_i(\sigma_{i})(\omega_{i-1}) \ \forall i \in [n],
\quad \Lambda(\sigma)(u_L) = \omega_n.
\end{equation}
The unique existence of an entropy solution $\hat{y}$ of \eqref{RP}
then follows by proving the unique existence of $\sigma \in \R^n$ such that
$u_R = \Lambda(u_L)(\sigma)$, which follows from the Implicit Function Theorem.
If $\sigma \leq 0$, the solution of \eqref{sol_RP}
is fully determined by $\hat{y}_i = \omega_i$ for all $i \in [n]_0$.
If $\sigma_i > 0$ for some $i \in [n]$, the intermediate states
$\hat{y}_{i-1}, \hat{y}_i$ must be connected by an $i$-rarefaction wave.
\end{proof}

We can now define the admissible set $\Vad$ of initial states for \eqref{GRP}.

\begin{assumption} \label{ass:Uad1}
Let $u_L \neq u_R \in \R^n$ with $|u_L - u_R|$ sufficiently small
and assume that the associated Riemann Problem
\eqref{RP} has an entropy solution only consisting of shock curves or
contact discontinuities, i.e., with the notation \eqref{implicit_map_RP}, 
there is a unique $\bar{\sigma} \in \R^n$ such that
$u_R = \Lambda(u_L)(\sigma)$. Furthermore, assume that 
$\bar{\sigma}_i < 0 $ for all genuinely nonlinear fields $i \in [n]$.
For constants $M_0, M_1>0$, we define
\begin{equation} \label{def_Uad}
\Vad = \left\{ (u_l, u_r) \in \V : 
\|u_l-u_L\|, \|u_r-u_R\| < M_0, \
\| u_l'\|, \| u_r'\| < M_1 \right\},
\end{equation}
where $\| \cdot \|$ denotes the $C^0$-norm on the 
respective intervals $[-\el, \varepsilon]$ and $[-\varepsilon, \el]$.
Now assume that $\varepsilon, M_0, M_1 > 0$ are sufficiently small
such that for any left and right states
\begin{equation} \label{jump_val_neighborhood}
(v_L, v_R) \in \mathcal{J} \coloneqq \Big\{ \left(u_l(x), u_r(x) \right) \in \Vad \ | \ 
- \varepsilon \leq x \leq \varepsilon \Big\} 
\end{equation}
it also holds that the Riemann Problem \eqref{RP} with
initial state $(v_L,v_R)$ has a unique entropy solution
only consisting of shock curves, i.e., it holds that 
$v_R = \Lambda(v_L)(\sigma)$ satisfies $\sigma_i \leq 0$
for all genuinely nonlinear fields $i \in [n]$. 
\end{assumption}
With the above choice of $\varepsilon, M_0, M_1 > 0$
and $\Vad$ from \eqref{def_Uad}, we define
\begin{equation} \label{Uad_physicalcoord}
	\Uad \coloneqq \Vad \times (-\varepsilon, \varepsilon) .
\end{equation}

\begin{remark} \label{rem:RP_soloperator_cont}
The open neighborhood $\mathcal{J}$ exists by the Implicit Function Theorem.
It also implies the continuous dependence of the solution $\sigma \in \R^n$
of $v_R = \Lambda(v_L)(\sigma)$ on $v_L$ and $v_R$.
To avoid rarefaction waves, it is necessary to assume that $\bar{\sigma}_i < 0 $
for genuinely nonlinear fields.
Otherwise, any perturbation of the initial state of the Riemann Problem could produce a solution 
containing a rarefaction wave.
\end{remark}

\begin{remark} \label{rem:initstate_small_Linfty}
Noting \cref{rem:solution_shifted} and choosing $c = (u_L - u_R)/2$, 
it can w.l.o.g.\ be assumed that any initial state 
$u_0 = \mathbf{1}_{[-\el, x_0)} u_l + \mathbf{1}_{(x_0, \el]} u_r$
defined by controls $(u_l, u_r, x_0) \in \Uad$ is bounded by
$ \| u_0 \|_{L^\infty} \leq |u_L - u_R|/2 + 2M_0 + M_1 (\varepsilon + \el)$.	
\end{remark}

\subsection{The Reference Space} \label{sec:refspace}
To cope with the varying positions of shock curves, 
we define a fixed space-time domain serving as a
reference space and equivalently reformulate \eqref{GRP} as done in \cite{LiYu1985}. 
The reference space is obtained by a transformation of the space variable.
As a consequence, the positions of shock curves are fixed.

All quantities in the reference space will be denoted with a bar.
The original coordinates will also be called the physical coordinates.
The reference domain is motivated by the structure
of an entropy solution of the Riemann Problem, see \eqref{sol_RP}.
In the sequel, we denote by $s_1 < \ldots < s_n$ the shock speeds
of the solution of the Riemann Problem with initial state $(u_L, u_R)$,
see \cref{ass:Uad1}.
Define the sectors 
\begin{equation} \label{def_D_i}
\dom_i = \{ (t, \x) \in [0,T] \times \R \ | \ 
s_i t \leq \x \leq s_{i+1} t \}, \quad i \in [n-1].
\end{equation}
For the sectors left of the $1$-shock and right of the $n$-shock curves,
we define the reference speeds $s_{\ell} = \lambdamax$ and
$s_r = - \lambdamax$ and define
\begin{align*} 
\dom_0 &= \{(t, \x) \in [0,T] \times \R \ | \ 
- \el + s_{\ell} t \leq \x \leq s_1 t \}, \\ 
\dom_n &= \{ (t, \x) \in [0,T] \times \R \ | \ 
s_n t \leq \x \leq \el + s_r t \},
\end{align*}
and $\dom = \cup_{i=0}^n \dom_i$.
The interior boundaries are denoted by
$\Sigma_i = \{ (t, s_i t) \ | \ t \in [0,T] \}$ for all $i \in [n]$.
Based on the definition of the sets $\dom_0,\ldots,\dom_n$, we define
\begin{equation} \label{def_X}
PC^{k}(\dom) \coloneqq \bigtimes_{i=0}^n C^k(\dom_i), \quad k \in \{0, 1 \} .
\end{equation}
We identify $\bar{y} \in PC^k(\dom)$ by 
$\bar{y} = (\bar{y}^j)_{j \in [n]_0}$ with $\bar{y}^j \in C^k(\dom_j)$ 
and the induced norm
$\| \bar{y} \|_{PC^k(\dom)} = \max_j  \| \bar{y}^j \|_{C^k(\dom_j)}$.
Clearly, $(PC^{k}(\dom), \| \cdot \|_{PC^k(\dom)} )$ is a Banach space.

With $\bar{y} \in \pcont$ and $\xs \in \R$ we associate the curves
\begin{align} 
\xi_{\ell}^{(\bar{y}, \xs)}(t) &= \int_0^t 
\lambda_n \left( \bar{y}^{0} (\tau, -\el + s_{\ell} \tau ) \right) \ \mathrm{d} \tau -\el , \label{xil_curve} \\ 
\xi_r^{(\bar{y}, \xs)}(t) &= \int_0^t \lambda_1 \left( \bar{y}^{n} (\tau, \el + s_r \tau) \right) 
\ \mathrm{d} \tau + \el , \label{xir_curve} \\
\xi_j^{(\bar{y}, \xs)}(t) & = \int_0^t \lambda_j 
\left( \bar{y}^{j-1} (\tau, s_j \tau), \bar{y}^{j} (\tau, s_j \tau) \right)
\ \mathrm{d} \tau + \xs .  \label{xij_curve_shock}
\end{align}
We can now define the state-dependent coordinate transformation.
\begin{definition}[Space transformation on $\dom_j$] \label{def:def_spacetrafo}
Let $\bar{y} \in \pcont$ 
and $\xs \in \R$. For $j \in [n-1]$ and $(t, \x) \in \dom_j$, 
the space variable $\bar{x}$ is associated with the location
\begin{equation} \label{x_trafo_middle}
x = x^{(\bar{y}, \xs),j}(t,\x) \coloneqq
\frac{\bar{x} - s_jt}{(s_{j+1}-s_j)t} \xi_{j+1}^{(\bar{y}, \xs)}(t) 
+ \frac{s_{j+1} t - \bar{x} }{(s_{j+1}-s_j)t} \xi_{j}^{(\bar{y}, \xs)}(t) 
\end{equation}
in physical coordinates. 
For $(t, \bar{x}) \in \dom_0$, we set 
\begin{equation} \label{x_trafo_left}
x^{(\bar{y}, \xs),0}(t,\bar{x}) \coloneqq 
    \frac{\bar{x} - (s_{\ell} t - \el)}{s_1t- (s_{\ell} t - \el)} \xi_{1}^{(\bar{y}, \xs)}(t) 
    + \frac{s_1 t - \bar{x} }{s_1t- (s_{\ell} t - \el)} \xi_{\ell}^{(\bar{y}, \xs)}(t) .
\end{equation}
The definition of $x^{(\bar{y}, \xs),n}(t,\bar{x})$ for $(t, \bar{x}) \in \dom_n$ is analogous
to \eqref{x_trafo_left}.
\end{definition}
For the well-definedness of the space transformation, $T > 0$ must be sufficiently small.

\begin{lemma} \label{lem:welldefinedness_spacetrafo}
Let $\bar{y} \in \pdiff$ and $\xs \in [-\varepsilon, \varepsilon]$
satisfy the following properties.
\begin{enumerate}
\item In $t=0$, the $C^1$-bounds
$|\bar{y}_{x}^0(0, \cdot)|,  |\bar{y}_{x}^n(0, \cdot)| \leq M_1$ hold with 
$M_1$ from \eqref{def_Uad}.
\item The jump in $t=0$ satisfies $(\bar{y}^0(0,0), \bar{y}^n(0,0)) \in \mathcal{J}$
with $\mathcal{J}$ as in \eqref{jump_val_neighborhood}.
\item Let $c_{\max} = \max_{j \in [n-1]} |\bar{y}^j(0,0)-\hat{y}_j|$
with the $\hat{y}_j$ piecewise constant states of the solution of the
Riemann Problem \eqref{RP} for the initial state $(u_L, u_R)$. 
\end{enumerate}  
Now assume that 
$ c_{\max}, \varepsilon, T > 0$ are sufficiently small such that
\begin{equation} \label{smallness_ass_welldef_spacetrafo}
    \| \nabla \lambda \| ( T \| \bar{y} \|_{PC^1(\dom)} + c_{\max} ) \leq \eta_{\min} /8,
    \qquad \varepsilon  \leq \frac16 \el, 
    \qquad T \leq \frac{\el}{6 \lambdamax}.
\end{equation}
Then \eqref{xil_curve}, \eqref{xir_curve}, \eqref{xij_curve_shock}
do not intersect and satisfy for all $t \in [0,T]$ and $j \in [n-1]$
\begin{align}
& \frac{1}{2} (s_{j+1} -s_j)t 
\leq \xi_{j+1}^{(\bar{y}, \xs)}(t) - \xi_{j}^{(\bar{y}, \xs)}(t)  
\leq 2 (s_{j+1} -s_j)t , \label{nondegcondition_shockcurves} \\
& \frac{1}{2} \el \leq \xi_{1}^{(\bar{y}, \xs)}(t) - \xi_{\ell}^{(\bar{y}, \xs)}(t)  \leq 2 \el, \qquad
\frac{1}{2} \el \leq \xi_{r}^{(\bar{y}, \xs)}(t) - \xi_{n}^{(\bar{y}, \xs)}(t)  \leq 2 \el.
\label{nondegcondition_boundarycurves}
\end{align}
\end{lemma}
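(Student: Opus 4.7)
The plan is to treat each shock curve $\xi_j$ as a perturbation of the straight line $t \mapsto x_0 + s_j t$ corresponding to the Riemann speed $s_j = \lambda_j(\hat{y}_{j-1},\hat{y}_j)$, and the boundary curves $\xi_\ell, \xi_r$ as perturbations of $t \mapsto -\el + s_\ell t$ and $t \mapsto \el + s_r t$. The proof then reduces to showing that these perturbations are small enough relative to $\eta_{\min} t$ and $\el$, respectively.

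The first step is to control the state values along the curves. By the triangle inequality and the $C^1$-bound on $\bar{y}$,
\begin{equation*}
    |\bar{y}^j(\tau, s_j \tau) - \hat{y}_j|
    \leq |\bar{y}^j(\tau, s_j\tau) - \bar{y}^j(0,0)| + |\bar{y}^j(0,0) - \hat{y}_j|
    \leq (1+\lambdamax)\,\tau\, \|\bar{y}\|_{PC^1(\dom)} + c_{\max}
\end{equation*}
for $j \in [n-1]$ and $\tau \in [0,T]$. Since $\lambda_j(\cdot, \cdot)$ is Lipschitz with constant controlled by a multiple of $\|\nabla \lambda\|$, assumption \eqref{smallness_ass_welldef_spacetrafo} (after absorbing the factor $(1+\lambdamax)$ into the smallness constant) yields, uniformly in $\tau$,
\begin{equation*}
    |\lambda_j(\bar{y}^{j-1}(\tau, s_j\tau), \bar{y}^j(\tau, s_j\tau)) - s_j|
    \leq \eta_{\min}/4.
\end{equation*}
Because $\xi_j(0) = \xi_{j+1}(0) = x_0$, we have
\begin{equation*}
    \xi_{j+1}(t) - \xi_j(t) - (s_{j+1}-s_j)\,t
    = \int_0^t \bigl[(\lambda_{j+1}(\cdot) - s_{j+1}) - (\lambda_j(\cdot) - s_j)\bigr]\,\mathrm{d}\tau,
\end{equation*}
whose modulus is bounded by $(\eta_{\min}/2)\,t \leq (s_{j+1}-s_j)\,t/2$ thanks to $s_{j+1}-s_j \geq \eta_{\min}$. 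This immediately yields \eqref{nondegcondition_shockcurves} and the non-intersection of consecutive shock curves on $(0,T]$.

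For the boundary curves, the initial gap $\xi_1(0) - \xi_\ell(0) = x_0 + \el$ lies in $[5\el/6, 7\el/6]$ because $|x_0| \leq \varepsilon \leq \el/6$. Since the eigenvalues appearing in $\xi_1$ and $\xi_\ell$ are pointwise bounded by $\lambdamax$, the integral
\begin{equation*}
    \xi_1(t) - \xi_\ell(t) - (x_0 + \el)
    = \int_0^t \bigl[\lambda_1(\bar{y}^0, \bar{y}^1)(\tau, s_1\tau) - \lambda_n(\bar{y}^0)(\tau, -\el + s_\ell \tau)\bigr]\,\mathrm{d}\tau
\end{equation*}
is bounded in modulus by $2\lambdamax T \leq \el/3$, so that $\xi_1(t) - \xi_\ell(t) \in [\el/2, 3\el/2]$. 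The analogous estimate for $\xi_r - \xi_n$ finishes \eqref{nondegcondition_boundarycurves}. Combining these gives the full chain $\xi_\ell < \xi_1 < \cdots < \xi_n < \xi_r$ on $(0,T]$.

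The main obstacle is bookkeeping: one has to track the constant $(1+\lambdamax)$ produced by differentiating along the line $\tau \mapsto (\tau, s_j\tau)$ and ensure it is absorbed into the smallness in \eqref{smallness_ass_welldef_spacetrafo} so that the eigenvalue perturbation is genuinely below $\eta_{\min}/4$. Once this is done, everything else is straightforward Lipschitz perturbation around the Riemann structure provided by \cref{thm:thm_sol_RP} and \cref{ass:Uad1}.
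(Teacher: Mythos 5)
Your proposal is correct and follows essentially the same route as the paper: bound $|\dot\xi_j(\tau)-s_j|$ by splitting into the time-variation of $\bar y$ along $\tau\mapsto(\tau,s_j\tau)$ plus the initial offset $c_{\max}$, integrate to compare $\xi_{j+1}-\xi_j$ with $(s_{j+1}-s_j)t$ using $s_{j+1}-s_j\geq\eta_{\min}$, and handle the boundary curves via the initial gap in $[\el-\varepsilon,\el+\varepsilon]$ and the maximal speed difference $2\lambdamax$. Your explicit remark about absorbing the factor $(1+\lambdamax)$ from differentiating along the ray is a fair point of care that the paper glosses over with its bare factor of $2$.
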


\begin{proof}
By definition \eqref{xij_curve_shock} of $\xi_j^{(\bar{y}, \xs)}(t)$ it holds 
for $t \in [0,T]$ that
\begin{equation*}
\begin{aligned}
    | \dot{\xi} _j^{(\bar{y}, \xs)}(t) - s_j |
    & \leq | \dot{\xi} _j^{(\bar{y}, \xs)}(t) - \dot{\xi} _j^{(\bar{y}, \xs)}(0) | 
    + | \dot{\xi} _j^{(\bar{y}, \xs)}(0) - s_j | \\
    & \leq 2  \| \nabla \lambda \| \big( T \| \bar{y} \|_{PC^1(\dom)}
    + c_{\max}  \big)
    \leq \eta_{\min} /4 
    \end{aligned}
\end{equation*}
and thus $| \xi _j^{(\bar{y}, \xs)}(t) - s_j t | \leq  T\eta_{\min} /4$,
which proves \eqref{nondegcondition_shockcurves}.
Assertion \eqref{nondegcondition_boundarycurves} follows from the observation 
that $|\xi_{1}^{(\bar{y}, \xs)}(0) - \xi_{\ell}^{(\bar{y}, \xs)}(0) | \in [\el - \varepsilon, \el + \varepsilon]$
and both curves have a maximal speed difference of $2 \lambdamax$, analogously for the right part.
\end{proof}

For the rest of this section, we assume $\y \in \pdiff$ to satisfy the 
smallness assumptions \eqref{smallness_ass_welldef_spacetrafo}.
This ensures the well-definedness of the control-dependent sectors 
\begin{equation} \label{domain_Dj}
\begin{aligned}
D^{(\bar{y}, \xs)}_0 & = \lbrace (t,x) \ | \
0 \leq t \leq T, \ \xi_{\ell}^{(\bar{y}, \xs)}(t) \leq x \leq \xi_{1}^{(\bar{y}, \xs)}(t) \rbrace,  \\ 
D^{(\bar{y}, \xs)}_j & = \lbrace (t,x) \ | \ 
0 \leq t \leq T, \ \xi_{j}^{(\bar{y}, \xs)}(t) \leq x
\leq \xi_{j+1}^{(\bar{y}, \xs)}(t) \rbrace \quad \forall j \in [n-1], \\ 
D^{(\bar{y}, \xs)}_n & = \lbrace (t,x) \ | \  
0 \leq t \leq T, \ \xi_{n}^{(\bar{y}, \xs)}(t) \leq x \leq \xi_{r}^{(\bar{y}, \xs)}(t) \rbrace ,
\end{aligned}
\end{equation}
in physical coordinates and the inverse transformations
\begin{equation}
    (t,x) \in D^{(\bar{y}, \xs)}_j \mapsto (t, \bar{x}^{(\bar{y}, \xs),j}(t,x)) \in \dom_j
    \quad \forall j \in [n]_0 .
\end{equation}
It can be easily verified that for $(t,x) \in D^{(\bar{y}, \xs)}_j$ it holds that
\begin{equation} \label{inv_trafo_xbar}
\bar{x}^{(\bar{y}, \xs),j}(t,x) = 
\frac{x - \xi_{j}^{(\bar{y}, \xs)}(t)}{\xi_{j+1}^{(\bar{y}, \xs)}(t) - \xi_{j}^{(\bar{y}, \xs)}(t)} s_{j+1} t + \frac{\xi_{j+1}^{(\bar{y}, \xs)}(t) - x}{\xi_{j+1}^{(\bar{y}, \xs)}(t) - \xi_{j}^{(\bar{y}, \xs)}(t)} s_{j} t
\end{equation}
for $j \in [n-1]$. The cases $j \in \{ 0, n\}$ are analogous and skipped for brevity.

\begin{remark}
    The crucial point of the reference space above is that
    the sectors $D^{(\bar{y}, \xs)}_j$ in physical coordinates
    depend on the state $\bar{y}$, but the fixed sectors $\dom_j$ do not.
    This allows to handle the a-priori unknown location of the shock curves.
\end{remark}

\begin{lemma} \label{lema:spacetrafo_properties}
Let $\bar{y} \in \pdiff$, $\xs \in \R$
satisfy the assumptions
of \cref{lem:welldefinedness_spacetrafo}.
\begin{enumerate}
	\item \label{item:m1} The transformations $ x^{(\bar{y}, \xs),j}$
	and $\bar{x}^{(\bar{y}, \xs),j}$ are twice continuously 
	differentiable w.r.t.\ $(t, \x)$, and $(t,x)$, respectively.
	Moreover, it holds that
	\begin{equation} \label{spacetrafo_xderivative_bound}
	\partial_{\bar{x}} x^{(\bar{y}, \xs),j} , \,
	\partial_{x} \bar{x}^{(\bar{y}, \xs),j}  
	\in \left[ 1/2,2 \right] \quad \text{on } \dom_j \
	\forall j \in [n]_0. 
	\end{equation}	
	\item The transformations are continuously Fr\'echet differentiable as maps
	\begin{equation} \label{trafo_Fderivative}
	(\bar{y}, \xs) \in \pcont \times \R
	\mapsto x^{(\bar{y}, \xs),j}
	\in C^1(\dom_j) \quad \forall j \in [n]_0.
	\end{equation}	
\end{enumerate}
\end{lemma}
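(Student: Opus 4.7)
The plan is to reduce both claims to the smoothness of the characteristic-like curves $\xi_\ell, \xi_r, \xi_j$ defined in \eqref{xil_curve}--\eqref{xij_curve_shock} and then apply the chain rule. Since $\bar{y} \in \pdiff$ and $\lambda \in C^{1,1}$, each integrand in \eqref{xil_curve}--\eqref{xij_curve_shock} is $C^1$ in $\tau$, so these curves lie in $C^2([0,T])$.

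For the first part, I first rewrite \eqref{x_trafo_middle} in a form that makes the apparent $1/t$ singularity at the apex $t = 0$ of $\dom_j$ transparent. Substituting $\xi_j(t) = \xs + \int_0^t \lambda_j\,d\tau$ into the numerator and cancelling the outer factor of $t$ yields
\begin{equation*}
x^{(\bar{y}, \xs), j}(t, \bar{x}) = \xs + \frac{\bar{x}}{s_{j+1} - s_j}\,\mu_\Delta(t) + \frac{1}{s_{j+1} - s_j}\int_0^t \bigl( s_{j+1}\lambda_j - s_j\lambda_{j+1} \bigr)\,d\tau,
\end{equation*}
where $\mu_\Delta(t) \coloneqq t^{-1}\int_0^t(\lambda_{j+1}-\lambda_j)(\tau)\,d\tau$, extended continuously at $t=0$ by $\lambda_{j+1}(0)-\lambda_j(0)$. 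A short Taylor argument shows that $\mu_\Delta$ is as smooth as its integrand. This exhibits $x^{(\bar{y}, \xs), j}$ as a smooth expression in $(t,\bar{x})$ on $\dom_j$, affine in $\bar{x}$. Differentiating in $\bar{x}$ produces $\partial_{\bar{x}} x^{(\bar{y}, \xs), j}(t, \bar{x}) = (\xi_{j+1}(t) - \xi_j(t))/((s_{j+1}-s_j)t)$, and \eqref{nondegcondition_shockcurves} gives the bound $[1/2, 2]$ at once. The inverse transformation \eqref{inv_trafo_xbar} is affine in $x$ with slope $(s_{j+1} - s_j) t/(\xi_{j+1}(t)-\xi_j(t)) \in [1/2, 2]$, so an analogous argument applies, and the sectors $j \in \{0, n\}$ are treated identically with $\xi_\ell$, $\xi_r$ replacing the missing $\xi_0$, $\xi_n$.

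For the second part I decompose the $(\bar{y}, \xs)$-dependence into three continuously differentiable steps. First, because $\lambda_j \in C^{1,1}$, the superposition map $\bar{y} \in \pcont \mapsto (\tau \mapsto \lambda_j(\bar{y}^{j-1}(\tau, s_j\tau), \bar{y}^j(\tau, s_j\tau))) \in C^0([0,T])$ is continuously Fréchet differentiable, with derivative given by $\nabla\lambda_j$ acting on the traced directions. Second, $\tau \mapsto \int_0^\tau(\cdot)\,d\sigma$ is a bounded linear operator from $C^0([0,T])$ into $C^1([0,T])$, so composition with the previous step yields that $(\bar{y}, \xs) \mapsto \xi_j^{(\bar{y}, \xs)}$ is continuously Fréchet differentiable from $\pcont \times \R$ into $C^1([0,T])$, the $\xs$-contribution being affine. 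Third, substituting these $C^1([0,T])$-curves into the representation above produces an expression that is linear in $\bar{x}$ and polynomial in $t$ and the $\xi$'s, so the chain rule gives the desired continuous Fréchet differentiability of $(\bar{y}, \xs) \mapsto x^{(\bar{y}, \xs), j} \in C^1(\dom_j)$.

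The main obstacle I expect is precisely the careful treatment of the $1/t$ cancellation at the apex of each sector $\dom_j$: both $\xi_{j+1}(t) - \xi_j(t)$ and $(s_{j+1}-s_j)t$ vanish at $t=0$, and the ratio must be controlled uniformly in the direction $\bar{x}/t \in [s_j, s_{j+1}]$. Once the $\mu_\Delta$-representation and the Taylor-based regularity of $\mu_\Delta$ are in place, all remaining steps are routine applications of the chain rule together with the uniform bounds from \cref{lem:welldefinedness_spacetrafo}.
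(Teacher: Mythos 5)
Your proposal is correct and follows essentially the same route as the paper: both first establish the continuous Fr\'echet differentiability of $(\bar{y},\xs)\mapsto \xi_j^{(\bar{y},\xs)}\in C^1([0,T])$ via the superposition map composed with integration (cf.\ \eqref{xij_Fdiff}), and both resolve the apparent $1/t$ degeneracy at the apex of $\dom_j$ by expressing the relevant quotients as averages of $\dot\xi_{j+1}-\dot\xi_j$; your $\mu_\Delta$ is exactly the paper's identity $x_{\x}^{(\bar{y},\xs),j}=((s_{j+1}-s_j)t)^{-1}\int_0^t(\dot{\xi}_{j+1}-\dot{\xi}_j)\,\ds$ from \eqref{x_trafo_middle_dx}, packaged as an affine-in-$\x$ representation. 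The only cosmetic difference is that the paper writes out the remainder estimates for $x_{\x}$ and $x_t$ explicitly (using $|\x-s_jt|/((s_{j+1}-s_j)t^2)\le (|s_j|+|s_{j+1}|)/((s_{j+1}-s_j)t)$ for the time derivative), whereas you absorb this into the regularity of $\mu_\Delta$; both arguments rest on the same cancellation.
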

\begin{proof}
	The first property follows from the smoothness of $\bar{y}$ and the definitions
	\eqref{xil_curve}-\eqref{xij_curve_shock} of the curves $\xi^{(\bar{y}, \xs)}$.
	Moreover, \eqref{spacetrafo_xderivative_bound} 
	follows from \eqref{nondegcondition_shockcurves} and \eqref{nondegcondition_boundarycurves}.
	
	To prove the second property, it follows from
	\eqref{xil_curve}, \eqref{xir_curve}, \eqref{xij_curve_shock}
	the continuous Fr\'echet differentiability of the maps
	\begin{equation} \label{xij_Fdiff}
		(\bar{y}, \xs) \in \pcont \times \R \mapsto  
		\xi_{\ell}^{(\bar{y}, \xs)}, 
		\xi_r^{(\bar{y}, \xs)}, 
		\xi_j^{(\bar{y}, \xs)} \in C^1([0,T])
	\end{equation}
	with the Fr\'echet derivative for all 
	$(\delta \bar{y}, \delta \xs) \in \pcont \times \R$ 
	and $j \in [n-1]$ given by
	\begin{equation} \label{Fdiff_xij}
		\big( d_{(\bar{y}, \xs)} \xi_j^{(\bar{y}, \xs)} \cdot 
		(\delta \bar{y}, \delta \xs) \big) (t) =
		\int_0^{t} 
		\nabla \lambda_j ( \bar{y}^{j-1} , \bar{y}^{j}  )
		\cdot ( \delta \bar{y}^{j-1} , \delta \bar{y}^{j} ) \ \intd \tau
		+ \delta \xs, 
	\end{equation}
	where $\bar{y}^{j-1}, \, \bar{y}^{j}, \, \delta \bar{y}^{j-1}, \, \delta \bar{y}^{j}$
	are all evaluated in $(\tau, s_j \tau)$.
	For $\xi_{\ell}$, we have that 
	\begin{equation} \label{Fdiff_xil}
		\big( d_{(\bar{y}, \xs)} \xi_{\ell}^{(\bar{y}, \xs)} \cdot 
		(\delta \bar{y}, \delta \xs) \big) (t) =
		\int_0^{t} 
		\nabla \lambda_n ( \bar{y}^{0} (\tau, -\el + s_{\ell} \tau )  )
		\cdot ( \delta \bar{y}^{0} (\tau, -\el + s_{\ell} \tau )\ \intd \tau.
	\end{equation}
	The results for $\xi_{r}^{(\bar{y}, \xs)}$ are analogous to $\xi_{\ell}^{(\bar{y}, \xs)}$.
	Therefore, it suffices to prove the assertions for $\xi_{\ell}^{(\bar{y}, \xs)}$.
	We abbreviate $\delta \xi_j = d_{(\bar{y}, \xs)} \xi_j^{(\bar{y}, \xs)} \cdot 
	(\delta \bar{y}, \delta \xs)$ for $j \in [n-1]$ 
	and $\delta \xi_{\ell} =  d_{(\bar{y}, \xs)} \xi_{\ell}^{(\bar{y}, \xs)} \cdot 
		(\delta \bar{y}, \delta \xs)$ in the sequel.	
	It holds by \eqref{Fdiff_xij} and \eqref{Fdiff_xil} that
	\begin{align}
		\| \delta \xi_j \|_{C^0([0,T]}, \, \| \delta \xi_{\ell} \|_{C^0([0,T]}
		& \leq T \| \nabla \lambda \|  \| \delta \y \|_{\pcont} + |\delta \xs| 
		\label{dxij_c0}, \\
		\| \dot{ \delta \xi_j } \|_{C^0([0,T]}, \, 
		\| \dot{ \delta \xi_{\ell} } \|_{C^0([0,T]}
		& \leq \| \nabla \lambda \| \| \delta \y \|_{\pcont} .
		\label{dxij_c1}
	\end{align}	
	It follows for the space transformation \eqref{x_trafo_middle}
	for arbitrary $j \in [n-1]$ that
	\begin{equation} \label{x_trafo_middle_Fdiff}
		\big( d_{(\bar{y}, \xs)} x^{(\bar{y}, \xs),j} \cdot (\delta \bar{y}, \delta \xs) \big) (t,\x) 
		= \frac{\bar{x} - s_jt}{(s_{j+1}-s_j)t} \delta \xi_{j+1}(t)
		+ \frac{s_{j+1} t - \bar{x} }{(s_{j+1}-s_j)t} \delta \xi_{j}(t)
	\end{equation}
	for all $(\delta \bar{y}, \delta \xs) \in \pcont \times \R$.
	Using the bounds $  \frac{\bar{x} - s_jt}{(s_{j+1}-s_j)t}, \,
	\frac{s_{j+1} t - \bar{x} }{(s_{j+1}-s_j)t} \in [0,1]$
	and $\frac{\bar{x} - s_jt}{(s_{j+1}-s_j)t} + \frac{s_{j+1} t - \bar{x} }{(s_{j+1}-s_j)t} = 1$
	for arbitrary $(t, \x) \in \dom_j$, this shows with \eqref{dxij_c0} that
	\begin{equation} \label{x_trafo_middle_Fdiff_normbound}
		\| d_{(\bar{y}, \xs)} x^{(\bar{y}, \xs),j} \cdot (\delta \bar{y}, \delta \xs) \|_{C^0(\dom_j)}
		\leq (T \| \nabla \lambda \| + 1)  \| \y \|_{\pcont} + |\delta \xs| . 
	\end{equation}
	It follows with the same argument and \eqref{x_trafo_left} that 
	\eqref{x_trafo_middle_Fdiff_normbound} also holds for $j \in \{0, n \}$.
	
	We now prove the asserted differentiability properties for the
	derivatives of the space transformations and start with $j \in [n-1]$.
	To do that, we write \eqref{x_trafo_middle_Fdiff} with
	the abbreviation
	$\delta x^{(\bar{y}, \xs),j} 
= \big( d_{(\bar{y}, \xs)} x^{(\bar{y}, \xs),j} \cdot 
(\delta \bar{y}, \delta \xs) \big)$ as 
\begin{equation} \label{x_trafo_middle_Fdiff_v2}
		\delta x^{(\bar{y}, \xs),j} (t,\x) 
		= \frac{\bar{x} - s_jt}{(s_{j+1}-s_j)t} 
		\int_0^t \dot{\delta \xi} _{j+1}(s) \ds 
		+ \frac{s_{j+1} t - \bar{x} }{(s_{j+1}-s_j)t} 
		\int_0^t \dot{\delta \xi} _{j}(s) \ds  
		+ \delta \xs
\end{equation}
for all $(t, \x) \in \dom_j$.
This proves that $\delta x^{(\bar{y}, \xs),j}$ is continuously
differentiable w.r.t.\ both $t$ and $\x$.
We rewrite \eqref{x_trafo_middle} as 
\begin{equation} \label{x_trafo_rewritten}
\begin{aligned}
	x^{(\bar{y}, \xs),j}(t,\x) 
	& = \frac{\bar{x} - s_jt}{(s_{j+1}-s_j)t} 
		\int_0^t \dot{\xi}_{j+1}^{(\bar{y}, \xs)}(s) \ds
	+ \frac{s_{j+1} t - \bar{x} }{(s_{j+1}-s_j)t}
		\int_0^t \dot{\xi}_{j}^{(\bar{y}, \xs)}(s) \ds 
\end{aligned}
\end{equation}
for all $(t, \x) \in \dom_j$.
Differentiating this w.r.t.\ $\x$ yields
\begin{equation} \label{x_trafo_middle_dx}
		x_{\x}^{(\bar{y}, \xs),j}(t,\x)
		= ((s_{j+1}-s_j)t)^{-1}
		\int_0^t \left(
			\dot{\xi}_{j+1}^{(\bar{y}, \xs)}(s) 
			- \dot{\xi}_{j}^{(\bar{y}, \xs)}(s)
		\right)
		\ds.
\end{equation}
Moreover, differentiating \eqref{x_trafo_middle_Fdiff_v2} w.r.t.\ $\x$
yields for all $(t, \x) \in \dom_j$ that
\begin{equation} \label{x_trafo_middle_dx_Fdiff}
		\delta x_{\x}^{(\bar{y}, \xs),j} (t,\x) 
		= ((s_{j+1}-s_j)t)^{-1}
		\int_0^t \left(
			\dot{ \delta \xi}_{j+1}(s) - \dot{ \delta \xi}_{j}(s)
		\right)
		\ds.
\end{equation}
Combining \eqref{x_trafo_middle_dx} and \eqref{x_trafo_middle_dx_Fdiff}
shows for all $(\delta \bar{y}, \delta \xs) \in \pcont \times \R$
that
\begin{equation}
\begin{aligned}
& | x_{\x}^{(\bar{y} + \delta \y, \xs + \delta \xs),j}
- x_{\x}^{(\bar{y}, \xs),j}
- \delta x_{\x}^{(\bar{y}, \xs),j} | (t, \x) \\
& \leq ((s_{j+1}-s_j)t)^{-1} 
\int_0^t \left(
	\dot{\xi}_{j+1}^{(\bar{y} + \delta \y, \xs + \delta \xs)}(s)
	- \dot{\xi}_{j+1}^{(\bar{y}, \xs)}(s)
	- \dot{ \delta \xi}_{j+1}(s)
\right) \ds \\
& + ((s_{j+1}-s_j)t)^{-1} 
\int_0^t \left(
	\dot{\xi}_{j}^{(\bar{y} + \delta \y, \xs + \delta \xs)}(s)
	- \dot{\xi}_{j}^{(\bar{y}, \xs)}(s)
	- \dot{ \delta \xi}_{j}(s)
\right) \ds \\
& + ((s_{j+1}-s_j)^{-1}  
\| \dot{\xi}_{j+1}^{(\bar{y} + \delta \y, \xs + \delta \xs)} 
- \dot{\xi}_{j+1}^{(\bar{y}, \xs)}
- \dot{ \delta \xi } _{j+1} \|_{C^0([0,T])} \\
& \leq ((s_{j+1}-s_j)^{-1} 
\| \dot{\xi}_{j}^{(\bar{y} + \delta \y, \xs + \delta \xs)} 
- \dot{\xi}_{j}^{(\bar{y}, \xs)}
- \dot{ \delta \xi } _j \|_{C^0([0,T])} 
\end{aligned}
\end{equation} 
for all $(t, \x) \in \dom_j$.
This proves that
$(\bar{y}, \xs) \in \pcont \mapsto x_{\x}^{(\bar{y}, \xs),j} \in C^0(\dom_j)$
is continuously Fr\'echet differentiability for all $j \in [n-1]$
by using \eqref{xij_Fdiff}.
To prove the assertion for $x_{t}^{(\bar{y}, \xs),j} \in C^0(\dom_j)$,
we differentiate \eqref{x_trafo_rewritten} 
w.r.t.\ time and obtain
\begin{equation} \label{x_trafo_middle_dt}
\begin{aligned}
	x_t^{(\bar{y}, \xs),j}(t,\x) 
& = - \frac{s_j t +  (\bar{x} - s_jt) }
	{(s_{j+1}-s_j) t^2} 
		\int_0^t \dot{\xi}_{j+1}^{(\bar{y}, \xs)}(s) \ds
	+ \frac{\bar{x} - s_jt}{(s_{j+1}-s_j)t} 
	\dot{\xi}_{j+1}^{(\bar{y}, \xs)}(t) \\
& + \frac{s_{j+1} t - (s_{j+1} t - \bar{x}) }
	{(s_{j+1}-s_j) t^2}
	\int_0^t \dot{\xi}_{j}^{(\bar{y}, \xs )}(s) \ds 
	+ \frac{s_{j+1} t - \bar{x} }{(s_{j+1}-s_j)t}
		\dot{\xi}_{j}^{(\bar{y}, \xs)}(t).
\end{aligned}
\end{equation}
Moreover, differentiating \eqref{x_trafo_middle_Fdiff_v2} w.r.t.\ $t$ implies
for all $(t, \x) \in \dom_j$ that
\begin{equation} \label{x_trafo_middle_dt_Fdiff}
\begin{aligned}
	\delta x_{t}^{(\bar{y}, \xs),j}(t,\x) 
& = - \frac{s_j t +  (\bar{x} - s_jt) }
	{(s_{j+1}-s_j) t^2} 
		\int_0^t \dot{ \delta \xi } _{j+1}(s) \ds
	+ \frac{\bar{x} - s_jt}{(s_{j+1}-s_j)t} 
	\dot{ \delta \xi } _{j+1}(t) \\
& + \frac{s_{j+1} t - (s_{j+1} t - \bar{x}) }
	{(s_{j+1}-s_j) t^2}
	\int_0^t \dot{ \delta \xi } _{j}(s) \ds 
	+ \frac{s_{j+1} t - \bar{x} }{(s_{j+1}-s_j)t}
		\dot{ \delta \xi } _{j} (t) .
\end{aligned}
\end{equation}
Clearly, it holds that $ s_j t \leq \x \leq s_{j+1} t$. This implies
$ \left| \frac{s_j t +  (\bar{x} - s_jt) } {(s_{j+1}-s_j) t^2} \right| 
\leq \frac{|s_j| + |s_{j+1}|} {(s_{j+1}-s_j) t}$ and 
$\left| \frac{s_{j+1} t - (s_{j+1} t - \bar{x}) }{(s_{j+1}-s_j) t^2} \right|
\leq \frac{|s_j| + |s_{j+1}|} {(s_{j+1}-s_j) t}$.
Using this in \eqref{x_trafo_middle_dt} and \eqref{x_trafo_middle_dt_Fdiff},
we obtain
\begin{equation*}
\begin{aligned}
	& | x_t^{(\bar{y} + \delta \y, \xs + \delta \xs), j} -
	 x_t^{(\bar{y}, \xs),j}- \delta x_{t}^{(\bar{y}, \xs),j}| (t,\x) \\
	& \leq \Big( \frac{|s_j| + |s_{j+1}|} {s_{j+1}-s_j} + 1 \Big) 
		\| \dot{\xi}_{j+1}^{(\bar{y} + \delta \y, \xs + \delta \xs)} 
		- \dot{\xi}_{j+1}^{(\bar{y}, \xs)}
		- \dot{ \delta \xi } _{j+1} \|_{C^0([0,T])} \\
	& + \Big( \frac{|s_j| + |s_{j+1}|} {s_{j+1}-s_j} + 1 \Big) 
		\| \dot{\xi}_{j}^{(\bar{y} + \delta \y, \xs + \delta \xs)} 
		- \dot{\xi}_{j}^{(\bar{y}, \xs)}
		- \dot{ \delta \xi } _{j} \|_{C^0([0,T])}, 
\end{aligned}
\end{equation*}
which holds independently of $(t, \x) \in \dom_j$.
This proves the continuous Fr\'echet differentiability of
$(\bar{y}, \xs) \in \pcont \mapsto x_{t}^{(\bar{y}, \xs),j} \in C^0(\dom_j)$
for all $j \in [n-1]$.

The case $j \in \{0, n \}$ is straightforward since 
\eqref{smallness_ass_welldef_spacetrafo}
implies that the denominators in \eqref{x_trafo_left} are
bounded away from zero, independently of $t$.
\end{proof}

An analogous result to \cref{lema:spacetrafo_properties} also holds
for the inverse transformations. 
\begin{lemma} \label{lema:spacetrafo_inverse_properties}
	Let $\bar{y} \in \pdiff$ and $\xs \in (- \varepsilon, \varepsilon)$
	satisfy the assumptions	of \cref{lem:welldefinedness_spacetrafo}.	
	With the domain $\D$ from \eqref{domain_physical},
	the inverse transformations $\x^{(\bar{y}, \xs),j}$ from \eqref{inv_trafo_xbar}	
	are continuously Fr\'echet differentiable as maps
	\begin{equation} \label{trafo_inverse_Fderivative}
		(\bar{y}, \xs) \in \pcont \times \R
		\mapsto \x^{(\bar{y}, \xs),j}
		\in C^0\big( \D )
		\quad \forall j \in [n]_0.
	\end{equation}		
\end{lemma}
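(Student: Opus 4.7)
The strategy is to mirror the proof of \cref{lema:spacetrafo_properties}: leverage the continuous Fr\'echet differentiability \eqref{xij_Fdiff} of the curves $\xi_j^{(\bar{y},\xs)}$ and apply standard chain and quotient rules to the explicit formula \eqref{inv_trafo_xbar}. The principal obstacle is a $0/0$ indeterminacy at $t = 0$: for $j \in [n-1]$, both the numerator $x - \xi_j^{(\bar{y},\xs)}(t)$ in \eqref{inv_trafo_xbar} and the denominator $\xi_{j+1}^{(\bar{y},\xs)}(t) - \xi_j^{(\bar{y},\xs)}(t)$ vanish linearly in $t$, so the chain rule cannot be applied to the formula in its given form.

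To cancel the singularity, I write for $j \in [n-1]$ the shock curves as $\xi_j^{(\bar{y},\xs)}(t) = \xs + t\,\mu_j^{(\bar{y},\xs)}(t)$, where
\[
\mu_j^{(\bar{y},\xs)}(t) = \frac{1}{t}\int_0^t \lambda_j\bigl(\bar{y}^{j-1}(\tau, s_j \tau),\, \bar{y}^j(\tau, s_j \tau)\bigr)\, \intd \tau
\]
is extended continuously by $\mu_j^{(\bar{y},\xs)}(0) = \lambda_j(\bar{y}^{j-1}(0,0), \bar{y}^j(0,0))$. Substituting this representation into \eqref{inv_trafo_xbar} and cancelling a factor $t$ from numerator and denominator yields
\[
\bar{x}^{(\bar{y},\xs),j}(t,x) = \frac{(s_{j+1}-s_j)(x-\xs) + t\bigl(s_j\, \mu_{j+1}^{(\bar{y},\xs)}(t) - s_{j+1}\,\mu_j^{(\bar{y},\xs)}(t)\bigr)}{\mu_{j+1}^{(\bar{y},\xs)}(t) - \mu_j^{(\bar{y},\xs)}(t)},
\]
which is regular down to $t=0$ and makes sense on all of $\D$. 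By \eqref{nondegcondition_shockcurves}, the denominator satisfies $\mu_{j+1}^{(\bar{y},\xs)} - \mu_j^{(\bar{y},\xs)} \geq (s_{j+1}-s_j)/2$ uniformly in $t \in [0,T]$, and by continuity of the derivatives this bound persists over a small neighborhood of the current control.

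Next, I would show that $(\bar{y},\xs) \in \pcont \times \R \mapsto \mu_j^{(\bar{y},\xs)} \in C^0([0,T])$ is continuously Fr\'echet differentiable by the same argument used for \eqref{xij_Fdiff}, with the averaged integrand in place of the plain integral; the $C^{1,1}$-regularity of $\lambda$ together with dominated convergence ensures the correct behavior at $t = 0$. With this in hand, standard chain and quotient rules in Banach algebras (noting the denominator is bounded away from zero) yield continuous Fr\'echet differentiability of $(\bar{y},\xs) \mapsto \bar{x}^{(\bar{y},\xs),j} \in C^0(\D)$ for all $j \in [n-1]$. The boundary cases $j \in \{0,n\}$ are simpler: by \eqref{nondegcondition_boundarycurves}, the denominators in the analog of \eqref{x_trafo_left} are bounded below by $\ell/2$ uniformly in $t$, so no reformulation is needed and the chain rule applies directly to \eqref{inv_trafo_xbar}.

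The decisive step is the first one — identifying the change of variables $\mu_j^{(\bar{y},\xs)}$ that cancels the linear factor of $t$ in both numerator and denominator and reveals the continuity of $\bar{x}^{(\bar{y},\xs),j}$ down to $t = 0$. Once this reformulation is in place, the proof becomes a routine application of the chain and quotient rules combined with \eqref{xij_Fdiff}.
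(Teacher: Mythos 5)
Your proposal is correct, and it handles the one genuinely delicate point of this lemma --- the $0/0$ degeneracy of \eqref{inv_trafo_xbar} at $t=0$ for $j \in [n-1]$ --- by a different device than the paper. The paper differentiates the singular quotient directly: it writes down the candidate derivative $\delta\x^{(\bar{y},\xs),j}$ as a quotient with denominator $(\xi_{j+1}-\xi_j)^2 \sim t^2$ and observes that the numerator carries enough powers of $t$ to compensate, the crucial cancellation being $|\delta\xi_{j+1}(t)-\delta\xi_j(t)| \leq t\,\|\nabla\lambda\|\,\|\delta\y\|_{\pcont}$ from \eqref{Fdiff_xij} (the $\delta\xs$ contributions cancel in the difference); this yields the bound \eqref{inversetrafo_lipschitz}, and the remainder estimate is left implicit as ``follows from \eqref{xij_Fdiff}.'' You instead desingularize first, writing $\xi_j^{(\bar{y},\xs)}(t) = \xs + t\,\mu_j^{(\bar{y},\xs)}(t)$ with the averaged speed $\mu_j$ and cancelling the common factor $t$, after which the denominator $\mu_{j+1}-\mu_j \geq (s_{j+1}-s_j)/2$ is uniformly nondegenerate by \eqref{nondegcondition_shockcurves} and everything reduces to routine chain and quotient rules for compositions of $C^1$ maps; your algebra checks out, the differentiability of $(\bar{y},\xs)\mapsto\mu_j$ into $C^0([0,T])$ follows since averaging $f\mapsto \frac1t\int_0^t f$ is a bounded linear operator on $C^0([0,T])$ and $\lambda\in C^{1,1}$ gives a uniform second-order remainder (dominated convergence is not actually needed), and the boundary sectors $j\in\{0,n\}$ are correctly dispatched via \eqref{nondegcondition_boundarycurves}. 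The trade-off: the paper's route is shorter and produces the explicit Lipschitz-type bound \eqref{inversetrafo_lipschitz} that is reused later (e.g.\ in the proof of \cref{thm:phys_smooth_region}), whereas your reformulation makes the remainder estimate at $t=0$ fully transparent rather than implicit; if you adopt your route you should still extract the quantitative bound on $d_{(\bar{y},\xs)}\x^{(\bar{y},\xs),j}$, which follows from your formula with the same constants.
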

\begin{proof}
	We only prove the case $j \in [n-1]$ since $j \in \{ 0, n \}$ is similar.
	First note that \eqref{inv_trafo_xbar} is in fact
	well-defined for all $(t,x) \in D$ since we have with 
	$| x - \xi_{j}^{(\bar{y}, \xs)}(t) | \leq 2 \ell$ and
	\eqref{nondegcondition_shockcurves} that 
	$| \x^{(\bar{y}, \xs),j}(t,x) | \leq  2 \ell C$
	for some constant $C = C(s_j, s_{j+1}, \lambdamax) > 0 $ depending
	on the constants $s_j, s_{j+1}, \lambdamax$.	
	
	The continuous Fr\'echet differentiability of \eqref{trafo_inverse_Fderivative}
	follows from \eqref{xij_Fdiff}.
	For all 
	$(\delta \y, \delta \xs) \in \pcont \times \R$ it holds
	with the abbreviations $\xi_j = \xi_j^{(\bar{y}, \xs)}$,
	$\xi_{j+1} = \xi_{j+1}^{(\bar{y}, \xs)}$, 
	$\delta \xi_j = d_{(\bar{y}, \xs)} \xi_j^{(\bar{y}, \xs)} \cdot 
	(\delta \bar{y}, \delta \xs)$, and
	$\delta \xi_{j+1} = d_{(\bar{y}, \xs)} \xi_{j+1}^{(\bar{y}, \xs)} \cdot 
	(\delta \bar{y}, \delta \xs)$ that the Fr\'echet derivative
	$ \delta \x^{(\bar{y}, \xs),j}
	= d_{(\bar{y}, \xs)} \x^{(\bar{y}, \xs),j} \cdot (\delta \bar{y}, \delta \xs)$
	is given for all $(t,x) \in \D$ by	
	\begin{equation*}
		\begin{aligned}
		& \delta \x^{(\bar{y}, \xs),j} (t,x) 
		= \Big[ 
		(- \delta \xi_j(t) s_{j+1} t + \delta \xi_{j+1}(t) s_j t) 
		\big( \xi_{j+1}(t) - \xi_{j}(t) \big) \\
		& \quad  
		- \big( (x - \xi_j(t)) s_{j+1} t + ( \xi_{j+1}(t) - x ) s_j t \big) 
		\big( \delta \xi_{j+1}(t) - \delta \xi_{j}(t) \big) \Big]
		\big( \xi_{j+1}(t) - \xi_{j}(t) \big)^{-2}.
		\end{aligned}
	\end{equation*}
	It holds for all $t \in [0,T]$ by \eqref{Fdiff_xij} that
	$| \delta \xi_{j+1}(t) - \delta \xi_{j}(t) | \leq 
	t \| \nabla \lambda \|  \| \delta \y \|_{\pcont}$
	and thus we obtain
	for all $(\delta \y, \delta \xs) \in \pcont \times \R$ 
	with a possibly different constant $C = C(s_j, s_{j+1}, \lambdamax) > 0 $ that
	\begin{equation} \label{inversetrafo_lipschitz}
		\| d_{(\bar{y}, \xs)} \x^{(\bar{y}, \xs),j} \cdot (\delta \bar{y}, \delta \xs) \|_{C^0(\D)}
		\leq 2C \big( T \| \nabla \lambda \|  \| \delta \y \|_{\pcont} + |\delta \xs| \big) .
	\end{equation}
\end{proof}

The semilinear counterpart of \eqref{GRP} can be equivalently formulated in the reference space
by using the above space transformations.

\begin{lemma} \label{lem:lemma_rescaledsolution_Dj}
Let $\bar{z} \in \pdiff, \xs \in \R$. On $\dom_j$ 
for $j \in [n]_0$ define 
\begin{align}
\bar{A}^{(\bar{z}, \xs),j}(t,\bar{x}) 
&= \big( A(\bar{z}(t, \bar{x})) - x_t^{(\bar{z}, \xs),j}(t,\bar{x}) I \big) 
\big( x_{\x}^{(\bar{z}, \xs),j}(t,\bar{x}) \big)^{-1}, \label{A_bar_z} \\ 
\bar{g}^{(\bar{z}, \xs),j}(t,\bar{x}, \bar{y}) 
&= g(t, x^{(\bar{z}, \xs),j}(t,\bar{x}), \bar{y}) \label{h_bar_z}.
\end{align}
If $\bar{y} \in C^1(\bar{D}_j)$ is a classical solution of the semilinear problem
\begin{equation} \label{problem_ybar_sl_component} 
\bar{y}_t + \bar{A}^{(\bar{z}, \xs),j} \bar{y}_x 
= \bar{g}^{(\bar{z}, \xs),j}(\cdot, \bar{y}) 
\quad \text{on } \dom_j,
\end{equation}
then the transformation into physical coordinates
$y(t, x) = \bar{y} (t, \bar{x}^{(\bar{z}, \xs),j}(t,x))$
is a classical solution $y \in C^1(D^{(\bar{y})}_j; \R^n)$ of the semilinear problem 
\begin{equation} \label{problem_y_sl} 
y_t + A(z) y_x = g(\cdot,y) \quad \text{on } D^{(\bar{y}, \xs)}_j,
\end{equation}
where $z(t, x) = \bar{z} (t, \bar{x}^{(\bar{z}, \xs),j}(t,x))$.
The converse is also true.
If the smallness assumptions \eqref{smallness_ass_welldef_spacetrafo}
are satisfied, the matrices $\bar{A}^{(\bar{z}, \xs),j}$ are also strictly hyperbolic.

\end{lemma}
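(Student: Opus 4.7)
The plan is to verify that \eqref{problem_ybar_sl_component} and \eqref{problem_y_sl} are related by a direct change of variables via the coordinate map $x^{(\bar{z},\xs),j}$ from \cref{def:def_spacetrafo} and its inverse $\bar{x}^{(\bar{z},\xs),j}$. The substance is a chain-rule computation that identifies the transformed operator with $\bar{A}^{(\bar{z},\xs),j}$ defined in \eqref{A_bar_z}; strict hyperbolicity then falls out from eigenvector preservation and positivity of the spatial Jacobian. The argument is not conceptually deep, and the main obstacle I anticipate is bookkeeping: keeping track of whether each quantity is evaluated at $(t,x)$ in physical coordinates or at $(t,\bar{x})$ in reference coordinates, particularly in the coefficient matrix which couples $A(\bar{z})$ with the derivatives of $x^{(\bar{z},\xs),j}$.

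First I would differentiate the identity $\bar{x}^{(\bar{z},\xs),j}(t, x^{(\bar{z},\xs),j}(t,\bar{x})) = \bar{x}$ in $t$ and $\bar{x}$ to obtain $\bar{x}_x^{(\bar{z},\xs),j} = 1/x_{\bar{x}}^{(\bar{z},\xs),j}$ and $\bar{x}_t^{(\bar{z},\xs),j} = -x_t^{(\bar{z},\xs),j}/x_{\bar{x}}^{(\bar{z},\xs),j}$; both denominators are bounded below by $1/2$ thanks to \eqref{spacetrafo_xderivative_bound}. Applying the chain rule to $y(t,x) = \bar{y}(t,\bar{x}^{(\bar{z},\xs),j}(t,x))$, I would express $y_t$ and $y_x$ in terms of $\bar{y}_t$ and $\bar{y}_{\bar{x}}$ and substitute these into $y_t + A(z)y_x$. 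Collecting the coefficient of $\bar{y}_{\bar{x}}$ yields exactly the factor $(A(\bar{z}) - x_t^{(\bar{z},\xs),j} I)(x_{\bar{x}}^{(\bar{z},\xs),j})^{-1} = \bar{A}^{(\bar{z},\xs),j}$, and by \eqref{h_bar_z} the source term $g(t,x,y(t,x))$ agrees with $\bar{g}^{(\bar{z},\xs),j}(t,\bar{x},\bar{y}(t,\bar{x}))$. This shows $y$ solves \eqref{problem_y_sl} if and only if $\bar{y}$ solves \eqref{problem_ybar_sl_component}, with the converse direction reading the same chain rule backwards. The asserted $C^1$-regularity of $y$ on $D^{(\bar{y},\xs)}_j$ is inherited from that of $\bar{y}$ and the $C^2$-smoothness of $\bar{x}^{(\bar{z},\xs),j}$ from \cref{lema:spacetrafo_properties}.

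For the strict hyperbolicity claim, I would observe that $\bar{A}^{(\bar{z},\xs),j}$ is obtained from $A(\bar{z})$ by subtracting a scalar multiple of the identity and rescaling by the positive factor $(x_{\bar{x}}^{(\bar{z},\xs),j})^{-1}$. Hence the right and left eigenvectors are unchanged, and the eigenvalues become $(\lambda_i(\bar{z}) - x_t^{(\bar{z},\xs),j})/x_{\bar{x}}^{(\bar{z},\xs),j}$. Since $x_{\bar{x}}^{(\bar{z},\xs),j} \geq 1/2$ under the smallness assumptions \eqref{smallness_ass_welldef_spacetrafo}, this affine map preserves the strict ordering of the real, distinct $\lambda_i(\bar{z})$, and thus $\bar{A}^{(\bar{z},\xs),j}$ is strictly hyperbolic.
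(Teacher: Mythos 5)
Your proposal is correct and follows essentially the same route as the paper: differentiate the composition of the transformation with its inverse to get $\bar{x}_x = (x_{\x})^{-1}$ and $\bar{x}_t = -x_t (x_{\x})^{-1}$, apply the chain rule to $y(t,x)=\bar{y}(t,\bar{x}^{(\bar{z},\xs),j}(t,x))$, and read off $\bar{A}^{(\bar{z},\xs),j}$ as the coefficient of $\bar{y}_{\x}$, with strict hyperbolicity coming from the bound $x_{\x}^{(\bar{z},\xs),j}\geq 1/2$. Your eigenvalue computation for the hyperbolicity claim is slightly more explicit than the paper's one-line remark, but the argument is the same.
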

\begin{proof}
Differentiating $x^{(\bar{z}, \xs),j} \big(t, \bar{x}^{(\bar{z}, \xs),j}(t,x) \big) = x$ 
w.r.t.\ $t$ and $x$ implies
\begin{equation} \label{spacetrafo_identities}
	\bar{x}_x^{(\bar{z}, \xs),j}(t,x) 
	= \big( x^{(\bar{z}, \xs),j}_{\x} (t, \x ) \big)^{-1}, \quad
	\bar{x}_t^{(\bar{z}, \xs),j}(t,x) = - x^{(\bar{z}, \xs),j}_{t} (t, \x )
	\big( x^{(\bar{z}, \xs),j}_{\x} (t, \x ) \big)^{-1}
\end{equation} 
evaluated in $\x = \bar{x}^{(\bar{z}, \xs),j}(t,x)$. 
Furthermore, $y(t, x) = \bar{y} (t, \bar{x}^{(\bar{z}, \xs),j}(t,x))$
satisfies $y_t = \bar{y}_t + \bar{y}_{\bar{x}} \bar{x}^{(\bar{z}, \xs),j}_t$
and $y_x = \bar{y}_{\bar{x}} \bar{x}^{(\bar{z}, \xs),j}_x$.
Inserting \eqref{spacetrafo_identities} proves \eqref{A_bar_z}. 
The strict hyperbolicity of $\bar{A}^{(\bar{z}, \xs),j}$ 
follows from the lower bound $\partial_{\bar{x}}^{(\bar{z}, \xs),j} x \geq 1/2$, 
see \cref{lema:spacetrafo_properties}.
\end{proof}

\begin{remark} \label{rem:notation_dropj}
The above lemma justifies to equivalently solve the transformed problems
\eqref{problem_ybar_sl} in the reference space. 
For convenience, we use the notation 	
\begin{equation} \label{problem_ybar_sl} 
\bar{y}_t + \bar{A}^{(\bar{z}, \xs)} \bar{y}_x 
= \bar{g}^{(\bar{z}, \xs)}(\cdot, \bar{y}) 
\quad \text{on } \dom
\end{equation}
if all piecewise smooth parts $\bar{y}^j \in C^1(\bar{D}_j)$
of $\bar{y} \in \pdiff$ satisfy \eqref{problem_ybar_sl_component} for all $j \in [n]_0$.
	
Also, we drop indices $j$ from quantities related to the different sectors $\dom_j$
as abuse of notation since it will implicitly be clear which $\dom_j$ is referred to.
\end{remark}

\begin{remark}
	\Cref{lem:lemma_rescaledsolution_Dj} shows an (unsurprising) 
	drawback of the reference space. Scaling out the 
	unknown shock locations comes at the cost of the 
	underlying problem becoming non-local
	since $\bar{A}^{(\bar{z}, \xs)}(t,\bar{x})$ and $\bar{g}^{(\bar{z}, \xs)}(t,\bar{x}, \bar{y}) $
	do not only depend on $\bar{z}(t,\bar{x})$, but on 
	values of $\bar{z}$ up until time $t$.    
\end{remark}

\begin{remark}
    Using \cref{lema:spacetrafo_properties}, it is easy to see that the 
    minimal-angle property also holds in the reference space.
    In fact, the eigenvalues of $\bar{A}^{(\bar{z}, \xs)}$ are given by
    \begin{equation} \label{lambda_bar}
    \bar{\lambda}^{(\bar{z}, \xs)}_i(t,\bar{x}) = \bar{x}_t^{(\bar{z}, \xs)}(t,x) 
    + \bar{x}_x^{(\bar{z}, \xs)}(t,x) \lambda_i(\bar{z}(t,\bar{x}))
    \quad \text{with } x = x^{(\bar{z}, \xs)}(t,\x),
    \end{equation}
    and thus it holds for any characteristic fields $i \neq j$ 
    and independent of $\bar{z}$, $(t, \x)$ that
    \begin{equation} \label{minimal_angle_refspace}
    | \bar{\lambda}^{(\bar{z}, \xs)}_i(t,\bar{x}) 
    - \bar{\lambda}^{(\bar{z}, \xs)}_j(t,\bar{x}) |
    \geq \bar{ \eta }_{\min} \coloneqq \eta_{\min}/2 > 0 .
    \end{equation}
    The entropy condition \eqref{entropy_cond_speeds}
    requires that on all shocks $\Sigma_j$ it holds
    for all $i < j < k$
    \begin{equation} \label{entropy_cond_Abar}
    \bar{\lambda}^{(\bar{z}^{j-1}, \xs)}_j \geq s_j \geq 
    \bar{\lambda}^{(\bar{z}^{j}, \xs)}_j, 
    \quad \bar{\lambda}^{(\bar{z}^{j-1}, \xs)}_k, 
    \bar{\lambda}^{(\bar{z}^{j}, \xs)}_k > s_j >
    \bar{\lambda}^{(\bar{z}^{j-1}, \xs)}_i, 
    \bar{\lambda}^{(\bar{z}^{j}, \xs)}_i.
    \end{equation}         
\end{remark}

Analogously to \eqref{char_curve} of 
characteristic curves in physical coordinates,
we define characteristic curves for the transformed
problem in the reference space.
\begin{definition} \label{def:characteristiccurve_refspace}
Let $\bar{z} \in \pdiff$,
$j \in [n]_0$, and $(\tau, \x) \in \dom_j$. 
The solution of
\begin{equation}
\bar{x_i}'(s) = \bar{\lambda}^{(\bar{z}, \xs)}_i(t,\bar{x_i}(s)), 
\quad \bar{x_i}(t) = \bar{x}
\end{equation}
is the $i$-\emph{characteristic curve}
w.r.t.\ $A^{(\bar{z}, \xs)}$
and denoted by $\bar{x_i}(\cdot ; t, \bar{x},\bar{z})$.
\end{definition}

The characteristic curves satisfy the following stability properties.
\begin{lemma} \label{lem:char_stability}
Let $\bar{z}\in \pdiff$ with $\| \bar{z} \|_{PC^1(\dom)} \leq C$ for some constant $C>0$.
Moreover, let $i \in [n], \ j \in [n]_0$, and $(t_1, x_1),(t_2,x_2) \in \bar{D_j}$ be arbitrary.
\begin{enumerate}
\item[(a)] There is $L_{\bar{x}}>0$ only depending on $C$
such that for 
$\bar{x}^{1}(t) = \bar{x_i}(t; t_1,x_1,\bar{z})$
and $\bar{x}^{2}(t) = \bar{x_i}(t; t_2,x_2,\bar{z})$
it holds for all $t \in [0,\max(t_1, t_2)]$ 
for which both $\bar{x}^1$ and $\bar{x}^2$ stay inside of $\bar{D}_j$ that
\begin{equation} \label{xbar_lipschitz}
| \bar{x}^1(t) - \bar{x}^2(t) | 
\leq L_{\bar{x}} ( |x_1 - x_2| + |t_1 - t_2| ).
\end{equation}
\item[(b)] Assume that \eqref{minimal_angle_refspace}, \eqref{entropy_cond_Abar} hold for $\bar{z}$.
Let $j \in [n-1]$, $i>j$. The case $i < j$ is analogous. 
Then $\bar{x}^1, \bar{x}^2$ have unique
intersection times $t^j_i(t_1, x_1) , t^j_i(t_2, x_2)$
with $\Sigma_j$ satisfying for a constant $L_{\bar{t}} > 0$ only depending on $C$ that
\begin{equation} \label{tbar_lipschitz}
| t^j_i(t_1, x_1) - t^j_i(t_2, x_2) | 
\leq L_{\bar{t}} ( |x_1 - x_2| + |t_1 - t_2| ).
\end{equation}
\item[(c)] Suppose that $(t_1,x_1) \in \mathrm{int}(\dom_j)$ and
assume $t \in [0,t_1]$ is such that $(t, \bar{x}(t)) \in \mathrm{int}(\dom_j)$.
Then $\bar{x}(t)$ and the intersection time $t^{i,j}(t_1,x_1)$ from part (b)
are continuously differentiable w.r.t.\ $(t_1,x_1)$.
\end{enumerate}
\end{lemma}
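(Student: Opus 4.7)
The plan is to reduce all three parts to ODE-stability arguments for the characteristic equation in \cref{def:characteristiccurve_refspace}. The key preliminary observation is that, under the hypotheses $\bar{z} \in \pdiff$ with $\|\bar{z}\|_{PC^1(\dom)} \leq C$ and the smallness conditions of \cref{lem:welldefinedness_spacetrafo}, the eigenvalue field $\bar{\lambda}_i^{(\bar{z},\xs)}$ from \eqref{lambda_bar} is uniformly bounded on $\dom$ and Lipschitz in $\bar{x}$ on each sector $\dom_k$, with constants depending only on $C$, $\|\nabla \lambda\|$, $\lambdamax$ and the reference speeds $s_k$. This follows by combining \eqref{spacetrafo_xderivative_bound} with the $C^1$-regularity of $\bar{z}$ inside each sector.

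For part (a), I would assume without loss of generality $t_1 \leq t_2$ and first translate the initial data of $\bar{x}^2$ to time $t_1$ via $|\bar{x}^2(t_1) - x_2| \leq \|\bar{\lambda}_i^{(\bar{z},\xs)}\|_\infty |t_1 - t_2|$, which yields $|\bar{x}^1(t_1) - \bar{x}^2(t_1)| \leq |x_1 - x_2| + \|\bar{\lambda}_i^{(\bar{z},\xs)}\|_\infty |t_1 - t_2|$. Gronwall's lemma applied to $\bar{x}^1 - \bar{x}^2$ on the common interval using the spatial Lipschitz constant of $\bar{\lambda}_i^{(\bar{z},\xs)}$ then yields \eqref{xbar_lipschitz} with $L_{\bar{x}}$ of the form $\max(1, \|\bar{\lambda}_i^{(\bar{z},\xs)}\|_\infty) \exp(LT)$.

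For part (b), I set $G^k(s) := \bar{x}^k(s) - s_j s$, so that $(G^k)'(s) = \bar{\lambda}_i^{(\bar{z},\xs)}(s, \bar{x}^k(s)) - s_j$. Combining the entropy inequalities \eqref{entropy_cond_Abar}, the minimal-angle bound \eqref{minimal_angle_refspace} and the smallness conditions \eqref{smallness_ass_welldef_spacetrafo} (which keep $\bar{\lambda}_i^{(\bar{z},\xs)}$ close to $\lambda_i(\bar{z})$ in the sense of \eqref{lambda_bar}), I would establish a uniform lower bound $(G^k)' \geq \delta > 0$ for $i > j$, where $\delta$ depends only on the problem parameters. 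Strict monotonicity then gives uniqueness of $t^j_i(t_k, x_k)$; existence follows from the sector geometry, because $G^k(t_k) = x_k - s_j t_k \geq 0$ at the starting point, and going backward the curve cannot exit $\dom_j$ through $\Sigma_{j+1}$ (the entropy condition for $i \geq j+1$ forces $\bar{\lambda}_i^{(\bar{z},\xs)} \geq s_{j+1}$ on that boundary), so it must meet $\Sigma_j$ at some $\tau \in [0, t_k]$ (noting that $\dom_j$ collapses to $(0,0) \in \Sigma_j$ at $t=0$). For the Lipschitz estimate, writing $\tau_k := t^j_i(t_k, x_k)$, using $\bar{x}^2(\tau_2) = s_j \tau_2$ and part (a) gives
\begin{equation*}
|G^1(\tau_2)| = |\bar{x}^1(\tau_2) - \bar{x}^2(\tau_2)| \leq L_{\bar{x}}(|x_1 - x_2| + |t_1 - t_2|),
\end{equation*}
while the mean value theorem together with $G^1(\tau_1) = 0$ yields $|G^1(\tau_2)| \geq \delta |\tau_1 - \tau_2|$. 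Combining these proves \eqref{tbar_lipschitz} with $L_{\bar{t}} = L_{\bar{x}}/\delta$.

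For part (c), inside $\dom_j$ the state $\bar{z}$ is $C^1$, so by \eqref{lambda_bar} and \cref{lema:spacetrafo_properties} the vector field $\bar{\lambda}_i^{(\bar{z},\xs)}$ is $C^1$ there. The classical theorem on $C^1$ dependence of ODE solutions on initial data then yields that $(t_1, x_1) \mapsto \bar{x}(t; t_1, x_1, \bar{z})$ is $C^1$ as long as the trajectory remains in the interior of $\dom_j$. For the intersection time, I apply the Implicit Function Theorem to $\Psi(\tau; t_1, x_1) := \bar{x}_i(\tau; t_1, x_1, \bar{z}) - s_j \tau$ at $\tau = t^j_i(t_1, x_1)$: the lower bound $\partial_\tau \Psi = \bar{\lambda}_i^{(\bar{z},\xs)} - s_j \geq \delta > 0$ from part (b) ensures invertibility, yielding $C^1$-dependence of $t^j_i$ on $(t_1, x_1)$. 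The main obstacle I expect is the quantitative step in part (b), namely securing a uniform positive lower bound $\bar{\lambda}_i^{(\bar{z},\xs)} - s_j \geq \delta$ throughout the relevant portion of $\dom_j$, since the entropy condition only directly provides this on $\Sigma_j$ and one must exploit \eqref{smallness_ass_welldef_spacetrafo} to control the perturbation of the eigenvalues under the reference-space transformation.
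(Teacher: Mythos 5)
Your proposal is correct and follows essentially the same route as the paper: a preliminary spatial Lipschitz bound on $\bar{\lambda}_i^{(\bar{z},\xs)}$, a time-shift plus Gronwall argument for (a), a transversality estimate for (b) in which the uniform speed gap $\bar{\lambda}_i^{(\bar{z},\xs)}-s_j\geq\delta$ (the paper takes $\delta=\bar{\eta}_{\min}$ from \eqref{minimal_angle_refspace} combined with \eqref{entropy_cond_Abar}) converts the spatial bound from (a) into the time bound, and standard $C^1$-dependence of ODE flows plus the Implicit Function Theorem for (c). Your phrasing of (b) via $G^k(s)=\bar{x}^k(s)-s_js$ and the mean value theorem is a cosmetic variant of the paper's direct inequality, and your added sketch of existence of the intersection time is a harmless bonus the paper leaves implicit.
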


\begin{proof}
First, we show that the eigenvalues $\bar{\lambda}^{(\bar{z}, \xs),j}$
of $\bar{A}^{(\bar{z}, \xs),j}$ from \eqref{lambda_bar}
are Lipschitz continuous w.r.t.\ $\x$.
We prove the case $j \in [n-1]$ since $j \in \{ 0, n \}$ is analogous.
From \eqref{x_trafo_middle_dx} we see that
$x_{\x}^{(\bar{z}, \xs),j}(t, \x)$
is independent of $\x$. Using that the space transformations are 
twice continuously differentiable by \cref{lema:spacetrafo_properties}, it holds
\begin{equation} \label{trafo_mixed_derivative}
	x_{t, \x}^{(\bar{z}, \xs),j}(t,\x)
	= x_{\x, t}^{(\bar{z}, \xs),j}(t,\x) 
	= \frac{ \big( \dot{\xi}_{j+1}^{(\bar{y}, \xs)}(t) - \dot{\xi}_{j}^{(\bar{y}, \xs)}(t) \big) t	
		- \big( \xi_{j+1}^{(\bar{y}, \xs)}(t) - \xi_{j}^{(\bar{y}, \xs)}(t) \big)}
	{ (s_{j+1}-s_j) t^2 } .
\end{equation}
Since $\xi_{j}^{(\bar{y}, \xs)} \in C^2([0,T])$ with
$ \| \ddot{\xi}_{j}^{(\bar{y}, \xs)} \| _{ C^0([0,T])} \| \leq \| \nabla \lambda_j \| C (1 + \lambdamax)$, this yields
\begin{equation*}
	\big| \xi_{j}^{(\bar{y}, \xs)}(t) - \dot{\xi}_{j}^{(\bar{y}, \xs)}(t) t \big|
	\leq t^2 \| \ddot{\xi}_{j}^{(\bar{y}, \xs)} \| _{ C^0([0,T])} \| /2
	\leq t^2 \| \nabla \lambda_j \| C (1 + \lambdamax) /2 
\end{equation*}
and similarly for $\dot{\xi}_{j+1}^{(\bar{y}, \xs)}$.
With \eqref{trafo_mixed_derivative}, this implies
$ \| \frac{\intd}{\intd \x} x_{t}^{(\bar{z}, \xs),j} \| 
\leq \frac{\| \nabla \lambda \| C (1 + \lambdamax)}{s_{j+1}-s_j}$.
This proves the Lipschitz continuity of $\bar{\lambda}^{(\bar{z}, \xs)}$
w.r.t.\ $\x$ with a Lipschitz constant $L_{\bar{\lambda}}(C)$.	

For (a), assume w.l.o.g. that $t_1 \leq t_2$ and let $t \leq t_2$ with
$(t, \bar{x}^1(t)), (t, \bar{x}^2(t)) \in \bar{D}_j$ be arbitrary.
If $t \in [t_1, t_2]$, it follows with the maximal characteristic speed $\lambdamax$
that $| \bar{x}^1(t) - \bar{x}^2(t) | \leq |x_1 - x_2| + \lambdamax |t_2 - t_1|$.
If $t < t_1$, it holds that
\begin{align*}
| \bar{x}^1(t) - \bar{x}^2(t) | 
& \leq |x_1 - x_2|  +  \lambdamax |t_1 - t_2| 
+  L_{\bar{\lambda}} \int_t^{t_1} |\bar{x}^1(t) - \bar{x}^2(t)| \dt.
\end{align*}
Assertion (a) then follows by Gronwall's inequality.

To prove (b), assume that $t^j_i(t_1, x_1) \leq t^j_i(t_2, x_2)$.
The minimal angle condition \eqref{minimal_angle_refspace} implies that 
$ | \bar{x}^1( t^j_i(t_2, x_2) ) - \bar{x}^2( t^j_i(t_2, x_2) ) |
\geq \bar{\eta}_{\min} | t^j_i(t_1, x_1) - t^j_i(t_2, x_2) | $. 
Using part (a) yields
$ | t^j_i(t_1, x_1) - t^j_i(t_2, x_2) |
\leq \bar{\eta}_{\min}^{-1} L_{\bar{x}} ( |x_1 - x_2|  +  |t_1 - t_2| )$. 

The differentiable dependence of solutions of ODEs w.r.t.\ the initial condition as in (c)
is a standard result, see, e.g., \cite{Hartmann2002}. 
Using \eqref{minimal_angle_refspace} and the Implicit Function Theorem also
proves the asserted differentiability of the intersection times.	
\end{proof}

\subsection{The Semilinear Problem} \label{sec:slproblems}
Throughout this section, we fix $\bar{z} \in \pdiff$ and
$\xs \in (-\varepsilon, \varepsilon)$ which
satisfy \eqref{minimal_angle_refspace} and the entropy condition \eqref{entropy_cond_Abar}.
We prove the unique existence of a function $\bar {y} \in \pcont$ which satisfies the
integral characteristic equations associated to the semilinear problem
\begin{equation} \label{sl_problem_compwise}
\bar{y}^j_t + \bar{A}^{(\bar{z}, \xs),j} \bar{y}^j_x = \bar{h}^j(\bar{y}) 
\quad \text{on } \dom_j \quad \forall j \in [n]_0
\end{equation}
with sector-dependent source terms 
$\bar{h}^j : C^0(\dom_j; \R^n) \longrightarrow C^0(\dom_j; \R^n)$
given in functional form. 
The initial state is 
$(u_l,u_r) \in \mathcal{W} \coloneqq C^0([-\el,0]; \R^n) \times C^0([0, \el]; \R^n)$.
The piecewise continuous parts of $\bar {y}$ are coupled by interior 
boundary conditions imposed on the transformed shocks with speeds $s_j$.
For this, let 
\begin{equation} \label{F_ij_def} 
	F_i^j : [0,T] \times \R \times \R^n \times \R^n \longrightarrow \R 
	\quad \forall i, j \in [n], \ i \neq j
\end{equation}
be such that it holds for
$\bar{y}^{j-1}_i = l_i(\bar{z}^{j-1})^\top \bar{y}^{j-1}$
and $\bar{y}^{j}_i = l_i(\bar{z}^{j})^\top \bar{y}^{j}$ that
\begin{equation} \label{F_ij_coupling}
	\begin{aligned}
		\bar{y}^{j}_i (t, s_j t) &= F_i^j \big( t, \bar{y}^{j-1}_i (t, s_j t),
		\bar{y}^{j-1} (t, s_j t), \bar{y}^{j} (t, s_j t)  \big) && i>j, \\
		\bar{y}^{j-1}_i (t, s_j t) &= F_i^j \big( t, \bar{y}^{j}_i (t, s_j t),
		\bar{y}^{j-1} (t, s_j t), \bar{y}^{j} (t, s_j t) \big) && i<j
	\end{aligned}
\end{equation}
for all $t \in [0,T]$. Note that \eqref{F_ij_coupling} prescribes
the initial value transported along the $i$-characteristic curve 
starting from the $j$-shock curve. This explains the case distinction 
between faster characteristics $i>j$ and slower ones for $i<j$.

\begin{remark}
	It is necessary to consider source terms in functional form
	in order to handle the transformation \eqref{h_bar_z} of the space variable.
	Note that \eqref{sl_problem_compwise} also covers the local case
	$\bar{h}^j(\bar{y})(t,\x) = \bar{g}^j(t, \x, \y(t,\x))$.
	Also note that the interior 
	boundary conditions \eqref{F_ij_coupling} are not explicit,
	but depend on the full left and right states $\bar{y}^{j-1}, \bar{y}^{j}$.
	This is necessary to account for the change of basis.
\end{remark}

Instead of \eqref{sl_problem_compwise}, we simply use the notation
\begin{equation} \label{problem_ref_sl} 
\bar{y}_t + \bar{A}^{(\bar{z}, \xs)} \bar{y}_x = \bar{h}(\bar{y}) 
\quad \text{on } \dom, \quad \bar{y}^0(0, \cdot) = u_l, 
\quad \bar{y}^n(0, \cdot) = u_r.
\end{equation}
Clearly, $\y$ cannot be expected to be a piecewise classical solution.
Instead, we only demand the associated characteristic equations to be satisfied.
This is known as \emph{broad solutions} \cite{Bressan2005}.
Before formally defining broad solutions, we introduce the notation 
\begin{equation} \label{hi_bar_broadsol}
\bar{h}_i(\bar{y}) = l_i(\bar{z})^\top \bar{h}(\bar{y}) 
+ \big( \partial_t l_i(\bar{z}) 
+ \bar{\lambda}^{( \bar{z}, \xs)} _i \partial_{\bar{x}} l_i(\bar{z}) \big)^\top \bar{y} .
\end{equation}

\begin{definition} \label{def:broadsol_sl}
A function $\bar{y} \in \pcont$ is called a broad solution of the semilinear
system \eqref{problem_ref_sl} equipped with the
interior boundary conditions \eqref{F_ij_coupling} if 
the following integral conditions are satisfied.
We use the abbreviation $\bar{x_i}(s) = \bar{x_i}(s ; t, \bar{x},\bar{z})$
for the characteristic curves. For any $(t,\x) \in \dom_0$, 
it should hold for all $i \in [n]$ that 
\begin{equation} \label{semilinear_characteristiceq_0n}
\bar{y}_i^0 (t, \bar{x}) = u_{l,i}(\bar{x_i}(0) ) 
+ \int_0^t \bar{h}_i(\bar{y}) \left(s,\bar{x_i}(s) \right) \ds,
\end{equation}
with $u_{l,i}(x) = l_i(\bar{z}^0(0,x)) ^\top u_l(x)$.
This is analogous on $\dom_n$.

If $j \in [n-1]$ and $i > j$, let $t^{j}_i = t^{j}_i(t, \bar{x}, \bar{z})$ denote the time 
of intersection of the $i$-characteristic $\bar{x_i}(\cdot ; t, \bar{x},\bar{z})$ 
with $\Sigma_j$. Then it should hold for any $(t, \x) \in \dom_j$ with the abbreviation
$F_i^j(\bar{y}) (t) \coloneqq F_i^j \big( t, \bar{y}^{j-1}_i (t, s_j t),
\bar{y}^{j-1} (t, s_j t), \bar{y}^{j} (t, s_j t) \big)$ that 
\begin{equation} \label{semilinear_characteristiceq_j}
\bar{y}_i^j (t, \bar{x}) = F_i^j(\bar{y}) ( t^{j}_i ) 
+ \int_{t_i^{j}}^t \bar{h}_i(\bar{y}) \left(s,\bar{x_i}(s) \right) \ds.
\end{equation}	
If $i \leq j$, replace $F_i^j(\bar{y}) ( t^{j}_i )$ by $F_i^{j+1}(\bar{y}) ( t^{j+1}_i )$
with the time of intersection $t^{j+1}_i$ of $\bar{x_i}(\cdot ; t, \bar{x},\bar{z})$
with $\Sigma_{j+1}$.
\end{definition}

Next, we prove the unique existence of a broad solution of 
\eqref{problem_ref_sl} under suitable Lipschitz assumptions.
For this, let $\dom^t = \dom \cap ([0,t] \times \R)$
and $\dom_j^t = \dom_j \cap ([0,t] \times \R)$.

\begin{theorem} \label{thm:existence_broadsol_sl}
Assume that the source term $\bar{h} : \pcont \longrightarrow \pcont$
admits a Lipschitz constant $L_{\bar{h}} > 0$ such that
for all $\bar{v}, \bar{w} \in \pcont$ it holds that
\begin{equation} \label{lipschitz_sl_sourceterm}
\|  \bar{h}(\bar{v}) - \bar{h}(\bar{w}) \| _{PC^0(\dom^t)}
\leq L_{\bar{h}} \| \bar{v} - \bar{w} \| _{PC^0(\dom^t)} \quad \forall t \in [0,T].
\end{equation}
Suppose that the interior boundary conditions \eqref{F_ij_coupling}
are continuous w.r.t.\ time and
Lipschitz continuous w.r.t.\ the state, i.e., there are $L_{F_1}, L_{F_2} > 0$
such that for all $v, w \in \R$, $v^1, v^2, w^1, w^2 \in \R^n$,
$t \in [0,T]$, and all $i \neq j \in [n]$ it holds that
\begin{equation} \label{lipschitz_Fij}
| F_i^j (t, v, v^1, v^2) -  F_i^j (t, w, w^1, w^2) |
\leq L_{F_1} | v - w | + L_{F_2} ( |v^1 - w^1| + |v^2 - w^2| )
\end{equation}
with $L_{F_1} > 1$ and $L_{F_2} \leq \frac{1}{4} \left( \sum_{k=0}^{n} (L_{F_1}) ^k \right)^{-1}$. 
Then for all $(u_l,u_r) \in \mathcal{W}$
there exists a unique broad solution $\bar{y} \in \pcont$ 
of \eqref{problem_ref_sl} satisfying \eqref{F_ij_coupling}.
\end{theorem}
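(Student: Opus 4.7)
The proof proceeds by a Banach fixed-point argument for the integral characteristic equations \eqref{semilinear_characteristiceq_0n}-\eqref{semilinear_characteristiceq_j}. I would parametrize a candidate solution by its characteristic components $\bar y_i^j \coloneqq l_i(\bar z^j)^\top \bar y^j$ on each sector $\dom_j$, and reconstruct the vector via $\bar y^j = \sum_i \bar y_i^j r_i(\bar z^j)$. This is allowed because $\{l_i(\bar z^j), r_i(\bar z^j)\}$ is a biorthonormal basis. Then I would define an operator $\Phi : \pcont \to \pcont$ component-wise by the right-hand sides of \eqref{semilinear_characteristiceq_0n} on $\dom_0, \dom_n$ (using the fixed initial data $u_l, u_r$) and of \eqref{semilinear_characteristiceq_j} on $\dom_j$, $j \in [n-1]$, with the boundary term $F_i^j$ if $i>j$ and $F_i^{j+1}$ if $i\leq j$. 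Well-definedness of $\Phi$ and the fact that $\Phi(\bar y) \in \pcont$ whenever $\bar y \in \pcont$ follow from \cref{lem:char_stability}: the characteristic curves $\bar x_i(\cdot;t,\bar x,\bar z)$ and the intersection times $t_i^j$ are Lipschitz in $(t,\bar x)$, so the right-hand sides depend continuously on $(t,\bar x) \in \dom_j$.

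\textbf{Contraction via a weighted norm.} To make $\Phi$ contractive I would introduce the weighted norm
\begin{equation*}
\|\bar y\|_* \coloneqq \max_{j \in [n]_0, \, i \in [n]} \, \sup_{(t, \bar x) \in \dom_j} e^{-\theta t} \frac{|\bar y_i^j(t, \bar x)|}{\rho_i^j},
\end{equation*}
with weights $\rho_i^j = (2 L_{F_1})^j$ for fast indices $i > j$ and $\rho_i^j = (2 L_{F_1})^{n-j}$ for slow indices $i \leq j$, and a time weight $\theta > 0$ to be chosen large. This norm is equivalent to $\|\cdot\|_{PC^0(\dom)}$. For two states $\bar v, \bar w \in \pcont$ and a fast index $i > j$, the boundary contribution to $(\Phi \bar v - \Phi \bar w)_i^j$ at the crossing time $t_i^j$ is bounded using \eqref{lipschitz_Fij} by
$L_{F_1}|\bar v_i^{j-1} - \bar w_i^{j-1}|(t_i^j, s_j t_i^j) + L_{F_2}(|\bar v^{j-1} - \bar w^{j-1}| + |\bar v^j - \bar w^j|)(t_i^j, s_j t_i^j)$. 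Dividing by $\rho_i^j = (2L_{F_1})^j$ and using $t_i^j \leq t$, the first term is absorbed exactly by the weight ratio $\rho_i^{j-1}/\rho_i^j = 1/(2L_{F_1})$, yielding a factor $1/2$. Expanding the vectors into characteristic components produces at most $\sum_{k=0}^n (2L_{F_1})^k$ type terms; thanks to the hypothesis $L_{F_2}\bigl(\sum_{k=0}^n L_{F_1}^k\bigr)^{-1} \leq 1/4$ (and a cosmetic constant absorbed in the $2$), the total cross-coupling contribution stays bounded by $1/4$. The source-term integral contributes, via \eqref{lipschitz_sl_sourceterm}, a factor of order $L_{\bar h}/\theta$; choosing $\theta$ sufficiently large, this is $\leq 1/4$. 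The slow case $i \leq j$ is symmetric, with the weights now decreasing as $j$ increases. Combining, $\Phi$ is a contraction with constant $\leq 3/4$ on $(\pcont, \|\cdot\|_*)$, so the Banach fixed-point theorem yields a unique fixed point $\bar y \in \pcont$, which is by construction the unique broad solution.

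\textbf{Main obstacle.} The central difficulty is that information travels through shocks in two opposing directions simultaneously: along fast characteristics errors in $\bar y_i^{j-1}$ feed into $\bar y_i^j$ via $L_{F_1}$, while along slow characteristics errors in $\bar y_i^{j+1}$ feed into $\bar y_i^j$ via the same constant, and both are entangled by the $L_{F_2}$ cross-coupling. A single scalar weight per sector cannot absorb both directions. The fix is the bidirectional weighting above, which exploits the explicit structural hypothesis $L_{F_1} > 1$ and $L_{F_2} \leq \tfrac14(\sum_{k=0}^n L_{F_1}^k)^{-1}$; verifying that the resulting contraction constant is strictly less than one uniformly in the number of sectors hinges precisely on this geometric-series bound. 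All other ingredients (measurability/continuity of the right-hand side, stability of the characteristic curves) are supplied by \cref{lema:spacetrafo_properties} and \cref{lem:char_stability}.
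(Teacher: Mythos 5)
Your overall strategy (Banach fixed point for the characteristic integral equations, weighted norm, contraction) is the right one and matches the paper's in spirit, and your identification of the main obstacle -- bidirectional propagation through the shocks with $L_{F_1}>1$ -- is exactly right. However, your contraction estimate has a genuine gap in the cross-coupling term. You define a fully explicit Picard operator (the first argument of $F_i^j$ is evaluated at the \emph{old} iterate $\bar v_i^{j-1}$) and compensate for $L_{F_1}>1$ with sector-dependent weights $\rho_i^j=(2L_{F_1})^{j}$ (fast) and $(2L_{F_1})^{n-j}$ (slow). The $L_{F_1}$-term is then indeed absorbed with a factor $1/2$, but the $L_{F_2}$-term couples \emph{all} characteristic components of $\bar v^{j-1}-\bar w^{j-1}$ and $\bar v^{j}-\bar w^{j}$ at the shock, including the slow components of sector $j-1$ carrying the weight $(2L_{F_1})^{n-j+1}$. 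After dividing by the target weight $\rho_i^j=(2L_{F_1})^{j}$, the cross-coupling contribution is of order $L_{F_2}\,(2L_{F_1})^{n+1-2j}$, which at $j=1$ is $L_{F_2}(2L_{F_1})^{n-1}$. The hypothesis only gives $L_{F_2}\sum_{k=0}^{n}L_{F_1}^{k}\le\frac14$, so an uncancelled factor of order $2^{n-1}$ remains (e.g.\ for $L_{F_1}\searrow 1$ one has $L_{F_2}\le\frac{1}{4(n+1)}$ while the weight ratio is $\approx 2^{n-1}$). The factor $2$ in the weight base is not a cosmetic constant: it compounds geometrically across sectors, and no choice of base $\kappa>L_{F_1}$ closes the estimate uniformly under the stated hypothesis.

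The paper avoids this by defining the fixed-point operator \emph{sequentially}: in \eqref{def_operator_case3} the first argument of $F_i^j$ is the trace $\bar y_i^{j-1}$ of the \emph{new} iterate, computed sector by sector outward from $\dom_0$, while only the $L_{F_2}$-arguments and the source term see the old iterate. The $L_{F_1}$-chain is then telescoped inside a single application of the operator (estimate \eqref{x12} applied recursively down to \eqref{x11}), which produces the geometric sum $\sum_{k=0}^{n}L_{F_1}^{k}$ as a multiplier of the genuinely small quantity $\frac{C_l}{C}+2L_{F_2}$ -- exactly what the hypothesis on $L_{F_2}$ and the choice of the time weight $C$ in \eqref{rescalednorm1} are calibrated for. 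No sector-dependent weights are needed, so no amplification of the cross-coupling occurs. If you restructure your operator this way (and note that well-definedness requires the order of evaluation you already use implicitly for the $F_i^{j}$ versus $F_i^{j+1}$ case split), the rest of your argument -- continuity of $\Phi(\bar y)$ via \cref{lem:char_stability} and the time weight handling the source term -- goes through as you describe.
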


\begin{proof}
We define an operator $\mathcal{T} : \pcont \longrightarrow \pcont$
whose fixed points are broad solutions.
Define $\bar{y} = \mathcal{T} (\bar{v})$ component-wise by
$ \bar{y}^j  = \sum_{i=1}^n \bar{y}^j_i  r_i(\bar{z}^j)$.
W.l.o.g. we only define $\bar{y}^j_i$ for $j \in [n-1]_0$ and $i > j$.
For $(t, \bar{x}) \in \dom_j$, we set
\begin{align} 
\bar{y}_i^0 (t, \bar{x}) & = u_{l,i}(\bar{x_i}(0) ) 
+ \int_0^t \bar{h}_i(\bar{v}) \left(s,\bar{x_i}(s) \right) \ds, \label{def_operator_case1} \\
\bar{y}_i^j (t, \bar{x}) & = F_i^j \left( t, \bar{y}^{j-1}_i (t, s_j t),
\bar{v}^{j-1} (t, s_j t), \bar{v}^{j} (t, s_j t)  \right)  
+ \int_{t_i^{j}}^t \bar{h}_i(\bar{v}) \left(s,\bar{x_i}(s) \right) \ds , \label{def_operator_case3}
\end{align}
for all $i \geq j+1$.
Note that \eqref{def_operator_case3} is in fact well-defined since it can be evaluated
in an order for which the input $\bar{y}^{j-1}_i$ into $F_i^j$ was already computed before.

Next, observe that the Lipschitz continuity \eqref{lipschitz_sl_sourceterm} of
the source term also transfers to $\bar{h}_i$ from \eqref{hi_bar_broadsol}
since it holds with
$C_l = L_{\bar{h}} + \| \partial_t l_i(\bar{z}) + \bar{\lambda}^{( \bar{z})} _i \partial_{\bar{x}} l_i(\bar{z}) \|_{\pcont}$ that 
\begin{equation} \label{hi_bar_lipschitz}
\|  \bar{h}_i(\bar{v}) - \bar{h}_i(\bar{w}) \| _{PC^0(\dom^t)}
\leq C_l \| \bar{v} - \bar{w} \| _{PC^0(\dom^t)}
\end{equation}
for all $\bar{v}, \bar{w} \in \pcont$ and all $t \in [0,T]$.
Note that $C_l$ depends on $\| \bar{z} \| _{PC^1(\dom)}$.	
We will now show that $\mathcal{T}$ has a unique fixed point in $\pcont$
using Banach's fixed-point theorem. 
To globalize the existence result in time, we use the norm
\begin{equation}
\| \bar{y} \|_* = \max_{j \in [n]_0} \max_{(t,\bar{x}) \in \dom_j} 
e^{-Ct} \sum_{i=1}^n | l_i(\bar{z}^j(t, \x))^ \top \bar{y}^j(t,\x) |  ,
\quad C = 4 C_l \sum_{k=0}^{n} (L_{F_1}) ^k ,
\label{rescalednorm1}
\end{equation}
on $\pcont$ with the given constant $C$.
Clearly, $(\pcont, \| \cdot \|_*)$ is a Banach space. 

Let $\bar{v}^{[1]},\bar{v}^{[2]} \in \pcont$ be arbitrary and let 
$\bar{y}^{[k]} = \mathcal{T}(\bar{v}^{[k]})$ for $k \in \{1, 2 \}$. 
For $j=0$, arbitrary $i \in [n]$ and $(t,\x) \in \dom_0$,
it follows by \eqref{def_operator_case1} that
\begin{equation} \label{x11}
\begin{aligned}
e^{-Ct} & \left| \bar{y}^{[2],0}_i (t,\x) - \bar{y}^{[1],0}_i (t,\x) \right| 
\leq e^{-Ct} \int_0^t C_l \| \bar{v}^{[2]} - \bar{v}^{[1]} \| _{PC^0(\dom_s)} \ds  \\ 
& \leq C_l \|\bar{v}^{[2]} - \bar{v}^{[1]} \|_* \int_0^t e^{-C(t-s)} \ds 
\leq \frac{C_l}{C} \|\bar{v}^{[2]} - \bar{v}^{[1]} \|_*. 
\end{aligned}
\end{equation}
If $j \in [n-1]$, $i>j$ and $(t, \x) \in \dom_j$, 
it holds (with the argument $t_i^{j}$ neglected) that
\begin{equation*}
\begin{aligned}
& e^{-Ct} \Big| F_i^j \big( \bar{y}^{[2],j-1}_i, \bar{v}^{[2], j-1}, \bar{v}^{[2], j} \big) 
- F_i^j \big( \bar{y}^{[1],j-1}_i, \bar{v}^{[1], j-1}, \bar{v}^{[1], j} \big) \Big| \\
\leq & e^{-Ct} \Big( L_{F_1} | \bar{y}^{[2],j-1}_i - \bar{y}^{[1],j-1}_i |  
+ L_{F_2} \big( |  \bar{v}^{[2], j-1} -  \bar{v}^{[1], j-1} | + |  \bar{v}^{[2], j} -  \bar{v}^{[1], j} | \big)
\Big) \Big| _{(t_i^{j}, s_j t_i^{j})} \\
\leq & e^{-C t_i^{j}} L_{F_1} \Big| \bar{y}^{[2],j-1}_i (t_i^{j}, s_j t_i^{j}) 
- \bar{y}^{[1],j-1}_i (t_i^{j}, s_j t_i^{j}) \Big| + 2 L_{F_2} \|\bar{v}^{[2]} - \bar{v}^{[1]} \|_* ,
\end{aligned}
\end{equation*}
since $ e^{-C \tau} | \bar{f}_i^j (\tau, x) | \leq \| \bar{f} \|_*$
for $\bar{f} \in \pcont$. With \eqref{def_operator_case3} it then follows that
\begin{equation} \label{x12}
\begin{aligned}
& e^{-Ct} \Big| \bar{y}^{[2],j}_i(t,\x) - \bar{y}^{[1],j}_i(t,\x) \Big| \\
\leq & e^{-C t_i^{j}} L_{F_1} \left| \bar{y}^{[2],j-1}_i  
- \bar{y}^{[1],j-1}_i  \right| (t_i^{j}, s_j t_i^{j})
+ \Big( \frac{C_l}{C} +  2 L_{F_2} \Big) \|\bar{v}^{[2]} - \bar{v}^{[1]} \|_* .
\end{aligned}
\end{equation}
Repeatedly applying \eqref{x12} and then using \eqref{x11} yields for 
any $j \in [n-1]$, $i>j$ that
\begin{equation*}
	e^{-Ct} \Big|  \bar{y}^{[2],j}_i(t,\x) - \bar{y}^{[1],j}_i(t,\x) \Big| 
	\leq \Big( \Big( \frac{C_l}{C} + 2 L_{F_2} \Big) \sum_{k=0}^{n} (L_{F_1}) ^k \Big)
	 \|\bar{v}^{[2]} - \bar{v}^{[1]} \|_* .
\end{equation*}
This holds analogously for the case $j \in [n]$ and $i \leq j$.
Inserting the assumed upper bound for $L_{F_2}$ and the 
constant $C$ from \eqref{rescalednorm1} finally results in 
\begin{equation*}
\|\bar{y}^{[2]} - \bar{y}^{[1]} \|_* \leq  \frac34 \|\bar{v}^{[2]} - \bar{v}^{[1]} \|_*  .
\end{equation*}
To apply Banach's fixed-point theorem, it remains to prove that
$\bar{y} = \mathcal{T}(\bar{v}) \in \pcont$ for $\bar{v} \in \pcont$, i.e., is piecewise continuous.
This is the case if $\bar{y}_i^j \in C^0(\dom_j)$ for all $i,j$. 
To verify this, first observe that $\bar{h}_i$ from \eqref{hi_bar_broadsol} 
is continuous on all sectors $\dom_j$ since $\bar{h}$ is assumed to be piecewise continuous. 
Using the stability properties of the characteristic curves from \cref{lem:char_stability}
implies the continuity of all $\bar{y}_i^0$ by \eqref{def_operator_case1}.
To show the continuity of the middle states $\bar{y}_i^j$ for $j \in [n-1]$ and $i>j$,
observe that the operators $F_i^j$ are continuous w.r.t.\ all arguments by assumption,
the boundary traces $ \bar{y}_i^{j-1} | _{\Sigma_j} $ are continuous,
and the intersection times $t_i^j$ in \eqref{def_operator_case3} also depend continuously on $(t, \x)$
by \cref{lem:char_stability}. This implies $\bar{y}_i^j \in C^0(\dom_j)$ and thus $\bar{y} \in \pcont$.
	
The unique existence of a fixed point of $\mathcal{T}$ follows by Banach's fixed-point theorem.
Clearly, it is a broad solution to the semilinear system by construction.
\end{proof}

We now prove a $PC^0$-bound for broad solutions of the semilinear system.
\begin{theorem} \label{thm:sl_pc0_bound}
Let the assumptions of \cref{thm:existence_broadsol_sl} hold and
$\bar{y} \in \pcont$ be the broad solution of \eqref{problem_ref_sl}.
There is $C = C(T, A, \| \bar{z} \|_{PC^1(\dom)}, L_{F_1}, L_{\bar{h}} ) > 0$ such that
\begin{equation} \label{explicit_sl_pc0_bound}
\| \bar{y} \|_{PC^0(\dom)} \leq C ( \| \bar{h}(0) \|_{PC^0(\dom)}
+ \max_{i,j} \| F_i^j (\cdot, 0, 0, 0) \|_{C^0([0,T])} +  \| u \|_\infty ) 
\end{equation}
with $\| u \|_\infty \coloneqq \max \{ \|u_l \|_{C^0([-\el,0]; \R^n)}, \|u_r \|_{C^0([0,\el]; \R^n)} \}$.
\end{theorem}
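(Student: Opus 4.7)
The strategy is to recycle the contraction estimate from the proof of \cref{thm:existence_broadsol_sl}. Since $\bar{y}$ is a fixed point of the operator $\mathcal{T}$ constructed there, applying the $\frac{3}{4}$-contraction inequality in the rescaled norm $\|\cdot\|_*$ of \eqref{rescalednorm1} to the pair $(\bar{v}^{[1]}, \bar{v}^{[2]}) = (0, \bar{y})$ yields
\[
\|\bar{y}\|_* = \|\mathcal{T}(\bar{y})\|_* \leq \|\mathcal{T}(\bar{y}) - \mathcal{T}(0)\|_* + \|\mathcal{T}(0)\|_* \leq \tfrac{3}{4}\|\bar{y}\|_* + \|\mathcal{T}(0)\|_*,
\]
hence $\|\bar{y}\|_* \leq 4\|\mathcal{T}(0)\|_*$. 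It thus suffices to estimate the a priori computable quantity $\|\mathcal{T}(0)\|_*$ in terms of the data and to translate back to the $PC^0$ norm.

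For the bound on $\mathcal{T}(0)$ I would proceed sector by sector. On $\dom_0$ (and analogously $\dom_n$), the explicit formula \eqref{def_operator_case1} with $\bar{v}\equiv 0$, together with $\bar{h}_i(0) = l_i(\bar{z})^\top \bar{h}(0)$, gives $\|\mathcal{T}(0)_i^0\|_{C^0(\dom_0)} \leq \|l\|(\|u\|_\infty + T\|\bar{h}(0)\|_{PC^0(\dom)})$. On an interior sector $\dom_j$ with $i > j$, the Lipschitz bound \eqref{lipschitz_Fij} about the base point $(t,0,0,0)$ applied to \eqref{def_operator_case3} yields the recursion
\[
\|\mathcal{T}(0)_i^j\|_{C^0(\dom_j)} \leq \|F_i^j(\cdot,0,0,0)\|_{C^0([0,T])} + L_{F_1}\|\mathcal{T}(0)_i^{j-1}\|_{C^0(\dom_{j-1})} + T\|l\|\|\bar{h}(0)\|_{PC^0(\dom)},
\]
and symmetrically $\dom_{j+1}$ controls $\dom_j$ in the case $i \leq j$. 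Iterating through the at most $n$ sectors accumulates a prefactor $\sum_{k=0}^n L_{F_1}^k$ in front of the data, producing the desired bound on $\|\mathcal{T}(0)\|_*$.

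Finally I would pass from $\|\cdot\|_*$ to $\|\cdot\|_{PC^0(\dom)}$ via the elementary estimate $\|\bar{y}\|_{PC^0(\dom)} \leq \|r\|\, e^{CT}\|\bar{y}\|_*$, where the rescaling constant $C$ in \eqref{rescalednorm1} is determined by $L_{F_1}$ and $C_l$, with $C_l$ in turn depending on $L_{\bar{h}}$ and $\|\bar{z}\|_{PC^1(\dom)}$. Composing the three bounds yields \eqref{explicit_sl_pc0_bound} with a constant of the stated dependence. I expect the only real delicacy to be the sector-to-sector bookkeeping: the dependency direction differs for fast characteristics ($i>j$, drawing on $\dom_{j-1}$) and slow ones ($i<j$, drawing on $\dom_{j+1}$), but since for each fixed $i$ the resulting dependency graph on sectors is acyclic of depth at most $n$, this is exactly the combinatorial structure that already made the original fixed-point argument succeed, and the same iteration converts the local Lipschitz bound \eqref{lipschitz_Fij} into a global one with an absolute constant factor.
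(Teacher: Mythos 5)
Your argument is correct, and it reaches the stated bound by a genuinely different route than the paper. The paper does not revisit the contraction at all: it works directly with the integral characteristic equations, introduces $Y(\tau) = \| \bar{y} \|_{PC^0(\dom^\tau)}$, splits $\bar{h}_i(\bar{y})$ into $\bar{h}_i(0)$ plus a Lipschitz remainder controlled by $Y$, iterates the interior boundary conditions through the sectors exactly as you do (accumulating the same $\sum_{k=0}^{n} L_{F_1}^k$ prefactor), and then closes the estimate with Gronwall's inequality, arriving at $\| \bar{y} \|_{PC^0(\dom)} \leq c^0_{\mathrm{data}} \exp(T c^1_{\mathrm{data}})$. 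Your route instead invokes the standard a~posteriori estimate for Banach fixed points, $\| \bar{y} \|_* \leq (1-q)^{-1} \| \mathcal{T}(0) \|_*$ with $q = \tfrac34$, and only has to bound $\mathcal{T}(0)$, for which the observation that $\bar{h}_i(0) = l_i(\bar{z})^\top \bar{h}(0)$ (the $\partial l$ term vanishes at $\bar{y}=0$) is the right simplification; the sector recursion for $\mathcal{T}(0)_i^j$ correctly feeds the already-computed trace $\mathcal{T}(0)_i^{j-1}$ into the first argument of $F_i^j$ while the $L_{F_2}$-arguments are zero. What your approach buys is economy: no second Gronwall argument, and the exponential factor $e^{CT}$ is inherited from the weight already built into $\| \cdot \|_*$, whose constant $C = 4 C_l \sum_k L_{F_1}^k$ depends on precisely the quantities listed in the theorem, so the stated dependence of the constant is preserved. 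What the paper's version buys is independence from the specific contraction factor: the Gronwall argument only uses the Lipschitz structure of $\bar{h}$ and $F_i^j$, not that the composite map contracts with a particular rate, which makes it marginally more robust to the constant bookkeeping in the existence proof. One cosmetic point worth making explicit if you write this up: your sector-by-sector bound on $\mathcal{T}(0)$ is in $PC^0$, and passing to $\| \cdot \|_*$ costs a harmless factor $n \| l \|$ from the sum over $i$ in the definition \eqref{rescalednorm1}; this does not affect the conclusion.
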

\begin{proof}	
Let $\bar{y} \in \pcont$ be the broad solution of \eqref{problem_ref_sl} and define
\begin{equation*} 
Y(\tau) = \| \bar{y} \|_{PC^0(\dom^t)} = \max \left\{ |\bar{y}^j(t,\x)| \ | \ 
(t, \x) \in \dom_j, \ t \leq \tau, \ j \in [n]_0 \right\}
\end{equation*}
for $\tau \in [0,T]$. Due to the piecewise continuity of $\y$, $Y$ is also continuous.

First, we bound $\bar{h}_i$ from \eqref{hi_bar_broadsol}.
Let $C_{\partial l} = C_{\partial l} ( \| \bar{z} \|_{PC^1(\dom)} )$ be a bound for
\begin{equation*}
| \partial_t l_i(\bar{z}) + \bar{\lambda}^{( \bar{z}, \xs)} _i \partial_{\bar{x}} l_i(\bar{z}) | 
\leq C_{\partial l}
\quad \forall i \in [n]
\end{equation*}
on all $\dom_j$. 
With the Lipschitz continuity property \eqref{lipschitz_sl_sourceterm}
of $\bar{h}$, this yields for suitable $t' \leq t$ 
and any curve $\gamma$ (e.g., a characteristic curve) that
\begin{equation}
\int_{t'}^t | \bar{h}_i(s, \gamma(s)) | \ds 
\leq T \| l \| \| \bar{h}(0) \|_{PC^0(\dom)}
+ \left( \| l \| L_{\bar{h}} + C_{\partial l} \right) \int_{t'}^t Y(s) \ds .
\end{equation}
By \eqref{semilinear_characteristiceq_0n} this yields 
for $(t,\x) \in \dom_j$ with $j \in \{ 0, n\}$ and $i \in [n]$ that
\begin{equation} \label{bound_sl_y0n_i}
| \bar{y}^{j}_i(t,\x) | \leq \| l \| \| u \|_\infty 
+  T \| l \| \| \bar{h}(0) \|_{PC^0(\dom)}
+ \left( \| l \| L_{\bar{h}} + C_{\partial l} \right) \int_{0}^t Y(s) \ds .
\end{equation}
For the middle states $j \in [n-1]$ and $i>j$
(the other case $i \leq j$ is again analogous),
first observe that with the Lipschitz constants
$L_{F_1}, L_{F_2}$ from \eqref{lipschitz_Fij} it holds that
\begin{equation*}
\begin{aligned}
& \left| F_i^j \left( t, \bar{y}^{j-1}_i (t, s_j t),
\bar{y}^{j-1} (t, s_j t), \bar{y}^{j} (t, s_j t)  \right) \right| \\
\leq & \max_{i,j} \| F_i^j (\cdot, 0, 0, 0) \|_{C^0([0,T])} 
+ L_{F_1} \|  \bar{y}^{j-1}_i  \|_{C^0(\dom_j^t)} + 2 L_{F_2} \| \bar{y} \|_{PC^0(\dom^t)} 
\end{aligned}
\end{equation*}
and thus we obtain by \eqref{semilinear_characteristiceq_j}
for $j \in [n-1]$, $i>j$ and $(t,\x) \in \dom_j$ that
\begin{equation*}
\begin{aligned}
| \bar{y}^{j}_i(t,\x) | & \leq T \| l \| \| \bar{h}(0) \|_{PC^0(\dom)}
+ \left( \| l \| L_{\bar{h}} + C_{\partial l} \right) \int_{t_i^{j}}^t Y(s) \ds \\ 
& + \| F (\cdot, 0) \| 
+ L_{F_1} \|  \bar{y}^{j-1}_i  \|_{C^0(\dom_j^t)} + 2 L_{F_2} \| \bar{y} \|_{PC^0(\dom^t)}
\end{aligned}
\end{equation*}
with $ \| F (\cdot, 0) \| = \max_{i,j} \| F_i^j (\cdot, 0, 0, 0) \|_{C^0([0,T])}$ as abuse of notation.
Repeatedly applying the above estimate and then using \eqref{bound_sl_y0n_i} results in
\begin{equation} \label{sl_pc0_x14}
\begin{aligned}
| \bar{y}^{j}_i(t,\x) | & \leq \big( 
T \| l \| \| \bar{h}(0) \|_{PC^0(\dom)}
+ \| F (\cdot, 0) \| + \| l \| \| u \|_\infty \big) \sum_{k=0}^{n} (L_{F_1}) ^k \\
& + \Big( \left( \| l \| L_{\bar{h}} + C_{\partial l} \right) \int_{0}^t Y(s) \ds
+ 2 L_{F_2} \| \bar{y} \|_{PC^0(\dom^t)} \Big) \sum_{k=0}^{n} (L_{F_1}) ^k .
\end{aligned}
\end{equation} 
Recall that the assumptions for $L_{F_2}$ in \cref{thm:existence_broadsol_sl}
imply $ 2 n L_{F_2} \sum_{k=0}^{n} (L_{F_1})^k \leq \frac12 $.
With this and $ \bar{y}^j  = \sum_{i=1}^n \bar{y}^j_i  r_i(\bar{z}^j)$,
we obtain after rearranging the inequality that
\begin{equation*}
Y(t) \leq  c^0_{\mathrm{data}} 
+ c^1_{\mathrm{data}} \int_{0}^t Y(s) \ds \quad \forall t \in [0,T]
\end{equation*}
with the constants depending on the data
\begin{equation} \label{pc0_bound_constants}
\begin{aligned}
c^0_{\mathrm{data}} &= 2 \left( 
T \| l \| \| \bar{h}(0) \|_{PC^0(\dom)}
+ \| F (\cdot, 0) \| + \| l \| \| u \|_\infty \right) \sum_{k=0}^{n} (L_{F_1}) ^k, \\
c^1_{\mathrm{data}} &= 2 \left( \| l \| L_{\bar{h}} + C_{\partial l} \right) \sum_{k=0}^{n} (L_{F_1}) ^k .
\end{aligned}
\end{equation}
Applying Gronwall's inequality yields
$\| \bar{y} \|_{PC^0(\dom)} = Y(T) \leq c^0_{\mathrm{data}} \mathrm{exp}(T c^1_{\mathrm{data}})$.
\end{proof}

The next result shows that the broad solution depends continuously on the data of the problem.
For a single initial value problem, this result is proved in \cite{Bressan2005}.

\begin{theorem} \label{thm:sl_continuous_dep_data}
Let $(\bar{z}^{[\nu]})_{\nu \in \N}$ be a bounded sequence in $\pdiff$
and $\bar{z} \in \pdiff$. 
Moreover, let $\xs, \, \xs^{\nu} \in (-\varepsilon, \varepsilon)$ for all $\nu \in \N$.
Assume all $\bar{A}^{(\bar{z}^{[\nu]}, \xs^{\nu})}$
and $\bar{A}^{(\bar{z}, \xs)}$ satisfy \eqref{minimal_angle_refspace} and \eqref{entropy_cond_Abar}.
Let $\bar{y} \in \pcont$ be the unique broad solution of \eqref{problem_ref_sl}, 
and $\bar{y}^{[\nu]} \in \pcont$ the unique broad solution of
\begin{equation} \label{problem_ref_sl_nu} 
\bar{y}_t + \bar{A}^{(\bar{z}^{[\nu]}, \xs^{\nu})} \bar{y}_x = \bar{h}^\nu(\bar{y}) 
\quad \text{on } \dom, \quad \bar{y}^0(0, \cdot) = u_l^\nu, 
\quad \bar{y}^n(0, \cdot) = u_r ^\nu , 
\end{equation}
with the interior boundary conditions 
$F_i^{j,\nu} : [0,T] \times \R \times \R^n \times \R^n \longrightarrow \R $ for all $\nu \in \N$. 
Assume that the source terms $\bar{h}^\nu, \bar{h}$ and the interior
boundary conditions $F_i^{j,\nu}, F_i^{j}$ have the regularity 
assumed in \cref{thm:existence_broadsol_sl} and 
the Lipschitz assumptions \eqref{lipschitz_sl_sourceterm} and \eqref{lipschitz_Fij}
hold uniformly for all $\nu \in \N$. 
Further, assume that
\begin{align*} 
\xs^{\nu} & \longrightarrow \xs , \\
\bar{z}^{[\nu]} & \longrightarrow \bar{z} && \text{in } PC^0(\dom) , \\ 
\bar{h}^{\nu}(\y) & \longrightarrow \bar{h}(\y) && \text{in } PC^0(\dom) \ \forall \y \in \pcont , \\
F_i^{j,\nu} & \longrightarrow F_i^{j} && \text{locally uniformly on } 
[0,T] \times \R \times \R^n \times \R^n \ \forall i \neq j \in [n] , \\ 
u_l^\nu & \longrightarrow u_l && \text{uniformly on } [-\el,0] , \\ 
u_r^\nu & \longrightarrow u_r && \text{uniformly on } [0,\el] ,
\end{align*}
for $\nu \rightarrow \infty$.
Then $\lim_{\nu \rightarrow \infty} \| \bar{y}^{[\nu]} - \bar{y} \|_{PC^0(\dom)} = 0$.
\end{theorem}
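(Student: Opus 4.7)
The strategy is to derive an integral inequality of Gronwall type for $Y^{[\nu]}(\tau) \coloneqq \| \bar{y}^{[\nu]} - \bar{y} \|_{PC^0(\dom^\tau)}$ by subtracting the characteristic integral equations \eqref{semilinear_characteristiceq_0n}--\eqref{semilinear_characteristiceq_j} satisfied by $\bar{y}^{[\nu]}$ and $\bar{y}$, and then conclude $Y^{[\nu]}(T) \to 0$. The uniform Lipschitz assumptions together with \cref{thm:sl_pc0_bound} imply that $(\bar{y}^{[\nu]})_\nu$ is uniformly bounded in $PC^0(\dom)$, so all estimates may be carried out on a fixed bounded set of values.

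Before attacking the difference equation, I would first establish the following auxiliary convergences. Since $\bar{z}^{[\nu]} \to \bar{z}$ in $PC^0(\dom)$ and $(\bar{z}^{[\nu]})_\nu$ is bounded in $PC^1(\dom)$, the defining integrals \eqref{xil_curve}--\eqref{xij_curve_shock} yield $\xi_j^{(\bar{z}^{[\nu]},\xs^\nu)} \to \xi_j^{(\bar{z},\xs)}$ in $C^1([0,T])$, whence the space transformations converge in $C^1(\dom_j)$ through \eqref{x_trafo_middle}--\eqref{x_trafo_left}. Consequently $\bar{\lambda}^{(\bar{z}^{[\nu]},\xs^\nu)}_i \to \bar{\lambda}^{(\bar{z},\xs)}_i$ uniformly on each $\dom_j$, and standard continuous-dependence for ODEs gives uniform convergence of the characteristic curves $\bar{x}_i^{\nu}(\cdot;t,\bar{x}) \to \bar{x}_i(\cdot;t,\bar{x})$, uniformly in $(t,\bar{x}) \in \dom_j$. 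The minimal-angle property \eqref{minimal_angle_refspace}, uniformly valid in $\nu$, combined with the implicit-function argument of \cref{lem:char_stability}(b), yields uniform convergence of the intersection times $t_i^{j,\nu} \to t_i^j$ with the shock lines $\Sigma_j$.

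With these tools in hand, I subtract \eqref{semilinear_characteristiceq_0n}--\eqref{semilinear_characteristiceq_j} for the two problems and split each contribution into three pieces. First, a pure \emph{data} term: $u_{l,i}^\nu(\bar{x}_i^\nu(0)) - u_{l,i}(\bar{x}_i(0))$ via $u_l^\nu \to u_l$ and continuity of $l_i(\bar{z}^{\nu,0}(0,\cdot))$; the integrand $[\bar{h}_i^\nu(\bar{y}) - \bar{h}_i(\bar{y})](s,\bar{x}_i^\nu(s))$, vanishing by the $PC^0$-convergence of $\bar{h}^\nu(\bar{y})$ and of $l_i(\bar{z}^\nu), \bar{\lambda}^{(\bar{z}^\nu,\xs^\nu)}$; the coupling $[F_i^{j,\nu} - F_i^j]$ evaluated at the $\nu$-arguments, vanishing by local-uniform convergence on a set made bounded by the uniform $PC^0$-bound. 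Second, a \emph{curve-mismatch} term that compares the limit objects $u_{l,i}$, $\bar{h}_i(\bar{y})$, $F_i^j$ at the $\nu$-arguments versus the limit ones; these vanish by uniform continuity on each closed sector (splitting the integral at $\Sigma_j$ if a characteristic crosses it) together with the convergence of $\bar{x}_i^\nu$ and $t_i^{j,\nu}$ established above. Third, a \emph{propagation} term $\bar{h}_i^\nu(\bar{y}^{[\nu]}) - \bar{h}_i^\nu(\bar{y})$ and the differences inside $F_i^{j,\nu}$, bounded via the uniform Lipschitz constants \eqref{lipschitz_sl_sourceterm}, \eqref{lipschitz_Fij} by $K \int_0^t Y^{[\nu]}(s)\,ds + 2 L_{F_2} Y^{[\nu]}(t) + L_{F_1} |\bar{y}_i^{[\nu],j-1} - \bar{y}_i^{j-1}|$ at the crossing point.

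Iterating over $(i,j)$ in the same order as in the proof of \cref{thm:existence_broadsol_sl}, and using the smallness assumption on $L_{F_2}$ to absorb the $2 L_{F_2} Y^{[\nu]}(t)$ cross-terms, I expect to obtain
\begin{equation*}
    Y^{[\nu]}(t) \leq \varepsilon_\nu + K \int_0^t Y^{[\nu]}(s) \, ds, \qquad t \in [0,T],
\end{equation*}
where $\varepsilon_\nu \to 0$ collects all data-convergence contributions and $K$ is independent of $\nu$; Gronwall's inequality then gives $Y^{[\nu]}(T) \leq C \varepsilon_\nu \to 0$. The hard part will be the careful bookkeeping of the curve-mismatch and propagation pieces together with the ordering required to resolve the implicit dependency in the coupling \eqref{F_ij_coupling}, ensuring that the accumulated constants stay independent of $\nu$; this is precisely what the uniform Lipschitz hypothesis in the statement is designed to provide.
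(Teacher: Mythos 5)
Your proposal is correct and takes essentially the same route as the paper: the paper's proof of \cref{thm:sl_continuous_dep_data} consists of a single remark that the argument is ``analogous to the proof of [Bressan, Theorem 3.5], adapted to handle interior boundary conditions and a source term in functional form,'' and your sketch is precisely that adaptation --- a Gronwall-type estimate on $\| \bar{y}^{[\nu]} - \bar{y} \|_{PC^0(\dom^\tau)}$ obtained by differencing the characteristic integral equations \eqref{semilinear_characteristiceq_0n}--\eqref{semilinear_characteristiceq_j}, after first establishing convergence of the transformed eigenvalues, characteristic curves, and intersection times, and iterating across the sectors using the smallness of $L_{F_2}$. You in fact supply considerably more detail than the paper does; the only point to watch (which the paper's one-line proof also glosses over) is the contribution of $\big( \partial_t l_i(\bar{z}^{[\nu]}) + \bar{\lambda}^{(\bar{z}^{[\nu]}, \xs^\nu)}_i \partial_{\bar{x}} l_i(\bar{z}^{[\nu]}) \big)^\top \bar{y}$ in \eqref{hi_bar_broadsol}, whose convergence does not follow from $PC^0$-convergence of $\bar{z}^{[\nu]}$ alone and must be handled as in Bressan's argument (e.g.\ by reducing to the convergence of the fixed-point maps applied to the fixed limit $\bar{y}$).
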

\begin{proof}
The proof is analogous to the proof of \cite[Theorem 3.5]{Bressan2005}.
It can be analogously adapted to handle interior boundary conditions
and a source term given in functional form.
Note that \eqref{A_bar_z} implies
$\lim_{\nu \rightarrow \infty} \| \bar{A}^{(\bar{z}^{[\nu]}, \xs)} 
- \bar{A}^{(\bar{z}, \xs)} \|_{PC^0(\dom; \R^{n \times n})} = 0$.
\end{proof}

Next, we prove that the broad solution from \cref{thm:existence_broadsol_sl}
is actually a classic solution if the data is suitably smooth.
We only state the result for the space derivative.
The corresponding result for the time derivative is analogous.
The result is restricted for the case of a local source term 
instead of the functional form \eqref{problem_ref_sl}
since this generality is not needed here.
First, we need the following auxiliary results.
\begin{lemma} \label{lem:differentiability_Itauxi}
Let $f \in C^1(\R^2)$ and $l, u \in C^1(\R^3; \R^n)$
with continuous derivatives.	
Then $I(t, \x) = \int_{f(t, \x)}^{t}  l_s(s, t, \x)^ \top u(s, t, \x) \intd s$
is continuously differentiable, and
\begin{equation*}
\begin{aligned}
I_{\x} (t, \x) = &  \int_{f(t, \x)}^{t} 
( l_s ^\top u_{\x} - l_{\x} ^\top u_s ) (s, t, \x) \intd s 
- l_{s} (f(t,\x) , t, \x)^\top \ u(f(t,\x) , t, \x) \ f_{\x}(t, \x) \\ 
& + l_{\x} (t, t, \x)^\top \ u(t, t, \x) 
- l_{\x} (f(t,\x) , t, \x)^\top \ u(f(t, \x) , t, \x) . \\ 
\end{aligned} 
\end{equation*}
\end{lemma}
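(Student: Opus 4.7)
The plan is to compute $I_{\x}(t, \x)$ as the limit of the difference quotients $(I(t, \x+h) - I(t, \x))/h$ as $h \to 0$. The naive approach of differentiating under the integral sign would produce the mixed partial $l_{s\x}$, which is not guaranteed to exist under the stated $C^1$ regularity. The main idea to circumvent this obstacle is to apply integration by parts in the variable $s$ in order to transfer the $s$-derivative off of the difference quotient of $l$ in $\x$ and onto the function $u$, which is differentiable in $s$ by assumption.

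First I would split the difference quotient into three contributions: $A_h$ from the varying lower limit $f(t, \x)$, $B_h$ from the $\x$-increment of $u(\cdot, t, \x)$, and $C_h$ from the $\x$-increment of $l_s(\cdot, t, \x)$. For $A_h$, a mean-value argument together with the differentiability of $f$ and continuity of the integrand yields
\begin{equation*}
A_h \ \longrightarrow \ - l_s(f(t,\x), t, \x)^\top u(f(t,\x), t, \x) \, f_{\x}(t, \x) .
\end{equation*}
For $B_h$, the difference quotients of $u$ in $\x$ converge uniformly in $s$ on the compact integration interval to $u_{\x}(\cdot, t, \x)$ by the mean value theorem and the uniform continuity of $u_{\x}$, and $l_s(\cdot, t, \x+h) \to l_s(\cdot, t, \x)$ uniformly, so $B_h \to \int_{f(t,\x)}^t l_s^\top u_{\x} \, \intd s$.

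The main obstacle is $C_h = \int_{f(t,\x)}^t \frac{l_s(s, t, \x+h) - l_s(s, t, \x)}{h}^\top u(s, t, \x) \, \intd s$, whose integrand contains a difference quotient of $l_s$ in $\x$ that need not have a pointwise limit. To handle this I would rewrite the numerator as $\partial_s [l(s, t, \x+h) - l(s, t, \x)]$ and integrate by parts in $s$. The resulting boundary terms at $s = t$ and $s = f(t,\x)$ and the remaining integral involve only the difference quotient $(l(s, t, \x+h) - l(s, t, \x))/h$, which converges uniformly in $s$ on the compact interval to $l_{\x}(s, t, \x)$ by the mean value theorem and uniform continuity of $l_{\x}$. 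Passing to the limit yields
\begin{equation*}
C_h \ \longrightarrow \ l_{\x}(t, t, \x)^\top u(t, t, \x) - l_{\x}(f(t,\x), t, \x)^\top u(f(t,\x), t, \x) - \int_{f(t,\x)}^t l_{\x}^\top u_s \, \intd s .
\end{equation*}

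Summing the three limits gives precisely the asserted formula. Continuity of $I_{\x}$ in $(t, \x)$ then follows since each summand on the right-hand side is continuous either as a composition of continuous functions or as a parameter integral of a jointly continuous integrand over an interval with continuously varying endpoints. An analogous computation, with an additional routine boundary term $l_s(t,t,\x)^\top u(t,t,\x)$ from the upper limit, handles $I_t$ and establishes $I \in C^1(\R^2)$.
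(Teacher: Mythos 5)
Your proof is correct and follows essentially the same route as the paper, which simply cites \cite[Lemma 3.2]{Bressan2005} for the case $f \equiv 0$ and remarks that the moving lower limit is handled by differentiating with respect to it. The core device in your argument --- integrating by parts in $s$ so that the difference quotient of $l_s$ in $\x$ is traded for the difference quotient of $l$ (thereby avoiding the nonexistent mixed partial $l_{s\x}$) --- is exactly the content of the cited lemma, and your treatment of the lower-limit term $A_h$ is the routine extension the paper alludes to; you have merely written out in full what the paper delegates to the reference.
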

\begin{proof}
The proof for $f \equiv 0$ can be found in \cite[Lemma 3.2]{Bressan2005}.
It can be easily extended to any $f \in C^1(\R^2)$
by differentiating w.r.t.\ the lower integration limit.
\end{proof}

\begin{lemma} \label{lem:YnuZnu}
Let $\alpha, \beta > 0$, $m \in (\frac12, 1]$,
and $Y_{\nu} \in C^0([0,T])$ for $\nu \in \N_0$. 
If 
\begin{equation*}
Y_0  \leq \alpha, \qquad
Y_{\nu + 1}(t) \leq \alpha m^{\nu} + \beta \int_0^t Y_{\nu }(s) \intd s 
+ \frac12 Y_{\nu }(t)
\quad \forall t \in [0,T], \ \forall \nu \in \N_0,
\end{equation*}
then $Y_{\nu}(t) \leq m^{\nu} K e^{\gamma t}$
with $K = \frac{2 \alpha}{2m - 1}$ and $\gamma = \frac{2 \beta}{2m - 1}$
for all $t \in [0,T]$ and $\nu \in \N$.
\end{lemma}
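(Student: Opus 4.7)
The plan is to prove the bound $Y_\nu(t) \leq m^\nu K e^{\gamma t}$ by straightforward induction on $\nu \in \N_0$, with the base case using only that $K \geq \alpha$ and the inductive step exploiting an exact algebraic cancellation built into the definitions of $K$ and $\gamma$.

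\textbf{Base case.} For $\nu = 0$, since $m \in (\frac12, 1]$ we have $2m - 1 \leq 1$, so $K = \frac{2\alpha}{2m-1} \geq 2\alpha \geq \alpha$, and therefore $Y_0(t) \leq \alpha \leq K \leq K e^{\gamma t}$ for all $t \in [0,T]$.

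\textbf{Inductive step.} Assume $Y_\nu(s) \leq m^\nu K e^{\gamma s}$ for all $s \in [0,T]$. Substituting into the assumed recurrence yields
\begin{equation*}
Y_{\nu+1}(t) \leq \alpha m^\nu + \beta m^\nu K \int_0^t e^{\gamma s}\,\intd s + \tfrac12 m^\nu K e^{\gamma t}
= m^\nu \left[ \alpha + \tfrac{\beta K}{\gamma}\bigl(e^{\gamma t} - 1\bigr) + \tfrac12 K e^{\gamma t}\right].
\end{equation*}
The crucial observation is that one must keep the $-1$ from the integration of $e^{\gamma s}$ rather than discarding it: with the specific choices $K = \tfrac{2\alpha}{2m-1}$ and $\gamma = \tfrac{2\beta}{2m-1}$, one has $\tfrac{\beta}{\gamma} = m - \tfrac12$, and hence $\tfrac{\beta K}{\gamma} = (m - \tfrac12)K = \alpha$. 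Therefore the constant term $\alpha - \tfrac{\beta K}{\gamma} = 0$ vanishes, and the remaining coefficient of $Ke^{\gamma t}$ is $\tfrac{\beta}{\gamma} + \tfrac12 = m$. This gives $Y_{\nu+1}(t) \leq m^\nu \cdot mKe^{\gamma t} = m^{\nu+1} Ke^{\gamma t}$, completing the induction.

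\textbf{Main obstacle.} The only subtlety is recognizing that the constants $K$ and $\gamma$ are engineered so that the constant term $\alpha m^\nu$ is exactly absorbed by the $-\tfrac{\beta K}{\gamma} m^\nu$ coming from the lower endpoint of $\int_0^t e^{\gamma s}\,\intd s$. A naive bound $e^{\gamma t} - 1 \leq e^{\gamma t}$ would leave an unwanted $\alpha m^\nu$ term and destroy the geometric decay rate $m$; keeping the exact expression is what makes the induction close.
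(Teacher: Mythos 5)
Your proof is correct. The induction closes exactly as you claim: the identities $\beta/\gamma = m - \tfrac12$ and $\beta K/\gamma = \alpha$ make the constant term $\alpha m^\nu$ cancel against the $-\tfrac{\beta K}{\gamma} m^\nu$ from the lower limit of the integral, and the remaining coefficient $\tfrac{\beta}{\gamma} + \tfrac12 = m$ yields precisely the factor $m^{\nu+1}$; the base case $Y_0 \leq \alpha \leq K$ holds since $2m-1 \leq 1$. The paper itself gives no argument here beyond asserting that the result extends a lemma of Bressan and that "the proof works similarly," so your self-contained induction supplies exactly the verification the paper omits, and your remark about why one must retain the $-1$ from $\int_0^t e^{\gamma s}\,\intd s$ correctly identifies the one place a careless estimate would break the geometric decay.
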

\begin{proof}
The result extends \cite[Lemma 3.2]{Bressan2005} and the proof works similarly.
\end{proof}

\begin{theorem} \label{thm:c1_sol_sl}
Let $\bar{f} \in PC^1(\dom \times \R^n; \R^n)$ such that there is $L_{\bar{f}} > 0$ with
\begin{equation*}
\| \bar{f}^j(\cdot, v^1) - \bar{f}^j(\cdot, v^2) \|_{C^0(\dom_j)} 
\leq L_{\bar{f}} | v^1 - v^2 | \qquad \forall v^1, v^2 \in \R^n, \ \forall j \in [n]_0.
\end{equation*} 
Suppose that the interior boundary conditions $F_i^j$ satisfy the assumptions of
\cref{thm:existence_broadsol_sl}, and $L_{F_2}$ from \eqref{lipschitz_Fij} is sufficiently small. 
If all $F_i^j$ and the initial state
$(u_l, u_r)$ are continuously differentiable on their respective domains,
the solution of 
\begin{equation} \label{problem_ref_sl_local} 
\bar{y}_t + \bar{A}^{(\bar{z}, \xs)} \bar{y}_x = \bar{f}(\cdot, \bar{y}) 
\quad \text{on } \dom, \quad \bar{y}^0(0, \cdot) = u_l, 
\quad \bar{y}^n(0, \cdot) = u_r,
\end{equation}
with interior boundary conditions $F_i^j$
is actually a classical solution $\bar{y} \in \pdiff$ and
$\bar{v} \coloneqq \bar{y}_{\x} \in \pcont$ is a broad solution of the semilinear system
\begin{equation} \label{pde_sl_yx}
\begin{aligned}
\bar{v}_t + \bar{A}^{(\bar{z}, \xs)} \bar{v}_{\x} & 
= \bar{f}_{\x}(\cdot, \bar{y}) + \bar{f}_{\y} (\cdot, \bar{y}) \bar{v}
- ( \partial_{\x} \bar{A}^{(\bar{z}, \xs)} ) \bar{v}  \\ 
\bar{v}^0(0, \cdot) &= u_l', \quad \bar{v}^n(0, \cdot) = u_r' ,
\end{aligned}
\end{equation}
and interior boundary conditions for all $i, j \in [n]$ with $i \neq j$
and all $t \in [0,T]$ given by
\begin{equation} \label{K_ij_coupling}
	\begin{aligned}
		\bar{v}^{j}_i (t, s_j t) &= K_i^j \big( t, \bar{v}^{j-1}_i (t, s_j t),
		\bar{v}^{j-1} (t, s_j t), \bar{v}^{j} (t, s_j t)  \big) && i>j, \\
		\bar{v}^{j-1}_i (t, s_j t) &= K_i^j \big( t, \bar{v}^{j}_i (t, s_j t),
		\bar{v}^{j-1} (t, s_j t), \bar{v}^{j} (t, s_j t) \big) && i<j,
	\end{aligned}
\end{equation}
with $\bar{v}^{j-1}_i = l_i(\bar{z}^{j-1})^\top \bar{v}^{j-1}$,
$\bar{v}^{j}_i = l_i(\bar{z}^{j})^\top \bar{v}^{j}$,
and $K_i^j$ given by
\begin{equation} \label{sl_yx_nodecond}
\begin{aligned}
K_i^j(t, v, w_1, w_2) \coloneqq & \frac{1}{s_j - \bar{\lambda}^{( \bar{z_j}, \xs)} _i} \Big[
- \big( \nabla l_i(\bar{z}^j) (\bar{z}^j_t + s_j \bar{z}^j_{\x}) \big) ^\top \bar{y}^j
- l_i(\bar{z}^j)^\top \bar{f}^j(\cdot, \bar{y}^j) \\
& + (F_i^j)_t + \partial_2(F_i^j) 
\Big( \big( \nabla l_i(\bar{z}^{j-1}) (\bar{z}^{j-1}_t + s_j \bar{z}^{j-1}_{\x}) \big) 
^\top \bar{y}^{j-1} \Big) \\
& + \partial_2(F_i^j) \big( l_i(\bar{z}^{j-1})^\top \bar{f}^{j-1}(\cdot, \bar{y}^{j-1}) 
+ (s_j-\bar{\lambda}^{( \bar{z}^{j-1}, \xs)}_i) v \big) \\
& + \partial_3(F_i^j) \big( \bar{f}^{j-1}(\cdot, \bar{y}^{j-1}) 
+ (s_j - \bar{A}^{(\bar{z}^{j-1}, \xs)}) w_1 \big) \\
& + \partial_4(F_i^j) \big( \bar{f}^{j}(\cdot, \bar{y}^{j}) 
+ (s_j - \bar{A}^{(\bar{z}^j, \xs)}) w_2 \big) \Big] ,
\end{aligned}
\end{equation}
where $\partial_k(F_i^j)$ denotes the 
partial derivative of $F_i^j$ w.r.t.\ the $k$-th argument.
\end{theorem}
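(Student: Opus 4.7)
The plan is to derive formally the semilinear system \eqref{pde_sl_yx}--\eqref{K_ij_coupling} that the space derivative $\bar v=\bar y_{\x}$ should satisfy, obtain a broad solution $\bar v\in\pcont$ of that system via \cref{thm:existence_broadsol_sl}, and then identify $\bar y_{\x}=\bar v$ by running a Picard iteration simultaneously for $\bar y$ and $\bar v$.

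\textbf{Derivation of the system for $\bar v$.}
Formal differentiation of \eqref{problem_ref_sl_local} in $\x$ produces \eqref{pde_sl_yx}. For the nodal conditions on $\Sigma_j$, apply the tangential derivative $\partial_t+s_j\partial_{\x}$ to \eqref{F_ij_coupling} and use \eqref{problem_ref_sl_local} to eliminate every occurrence of $\partial_t \bar y$ in favour of $\bar f(\cdot,\bar y)-\bar A^{(\bar z,\xs)}\bar y_{\x}$. The resulting equation is algebraic in the characteristic component $\bar v^j_i$ (or $\bar v^{j-1}_i$) and invertible because the factors $s_j-\bar\lambda^{(\bar z^j,\xs)}_i$ and $s_j-\bar\lambda^{(\bar z^{j-1},\xs)}_i$ are uniformly bounded away from zero by the minimal-angle property \eqref{minimal_angle_refspace} together with the entropy condition \eqref{entropy_cond_Abar}. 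Solving yields exactly \eqref{sl_yx_nodecond}. I then verify the hypotheses of \cref{thm:existence_broadsol_sl} for \eqref{pde_sl_yx} with nodal conditions \eqref{K_ij_coupling}: the right-hand side is linear in $\bar v$ with bounded Lipschitz coefficients (since $\bar f\in PC^1$ and $\bar y$ is already bounded in $\pcont$ by \cref{thm:sl_pc0_bound}); the scalar Lipschitz constant of $K_i^j$ in its second argument takes the form $|\partial_2 F_i^j|\cdot|s_j-\bar\lambda^{(\bar z^{j-1},\xs)}_i|/|s_j-\bar\lambda^{(\bar z^j,\xs)}_i|$, while the Lipschitz constant with respect to the full states $w_1,w_2$ is a controlled multiple of $L_{F_2}$ and can therefore be kept small under the smallness assumption. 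This yields a unique broad solution $\bar v\in\pcont$ of \eqref{pde_sl_yx}, \eqref{K_ij_coupling}.

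\textbf{Coupled Picard iteration.}
I run the iteration $\bar y_{\nu+1}=\mathcal T(\bar y_\nu)$ of \cref{thm:existence_broadsol_sl} starting from some $\bar y_0\in\pdiff$ that is compatible with $u_l,u_r$ and the nodal relations \eqref{F_ij_coupling}, and in parallel the iteration $\bar v_{\nu+1}=\widetilde{\mathcal T}(\bar v_\nu)$ associated to \eqref{pde_sl_yx}, \eqref{K_ij_coupling} with initial guess $\bar v_0=(\bar y_0)_{\x}$, where inside $\widetilde{\mathcal T}$ the $\bar y$-dependent coefficients are evaluated at $\bar y_\nu$. The central inductive claim is $\bar y_\nu\in\pdiff$ with $(\bar y_\nu)_{\x}=\bar v_\nu$ on each $\dom_j$. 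The step from $\nu$ to $\nu+1$ proceeds by differentiating the integral representations \eqref{semilinear_characteristiceq_0n}, \eqref{semilinear_characteristiceq_j} w.r.t.\ $\x$: the characteristic curves $\bar x_i(\cdot;t,\x,\bar z)$ and the intersection times $t^j_i(t,\x)$ are $C^1$ in $(t,\x)$ by \cref{lem:char_stability}(c), so that \cref{lem:differentiability_Itauxi} applies to the integrals with moving lower limit $t^j_i(t,\x)$. Substituting the induction hypothesis $(\bar y_\nu)_{\x}=\bar v_\nu$ and using the derivation of \eqref{sl_yx_nodecond} in reverse, one recognises the resulting expression as exactly the integral defining $\bar v_{\nu+1}$.

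\textbf{Passage to the limit and main obstacle.}
The contractive construction in \cref{thm:existence_broadsol_sl} yields $\bar y_\nu\to\bar y$ and $\bar v_\nu\to\bar v$ in $\pcont$, where $\bar y$ is the broad solution of \eqref{problem_ref_sl_local}. Combined with $(\bar y_\nu)_{\x}=\bar v_\nu$ and the standard fact that uniform convergence of a sequence together with uniform convergence of its derivatives implies $C^1$-convergence of the limit, this gives $\bar y\in\pdiff$ with $\bar y_{\x}=\bar v$ on every $\dom_j$. The analogous statement for $\bar y_t$ follows by an identical argument or, more directly, by solving the PDE \eqref{problem_ref_sl_local} for $\bar y_t$ in terms of $\bar y_{\x}$. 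The principal obstacle is the inductive step above: uniform control of $\|\bar v_\nu\|_{\pcont}=\|(\bar y_\nu)_{\x}\|_{\pcont}$ along the iteration is needed so that the boundary terms produced by \cref{lem:differentiability_Itauxi} remain bounded and match the structure of $K_i^j$. This is where \cref{lem:YnuZnu} is applied to the differences of successive iterates, and where the smallness of $L_{F_2}$ is crucial: it controls the new coupling constant entering \eqref{K_ij_coupling} and thereby keeps the parallel iteration for $\bar v$ contractive on the same time horizon $T$.
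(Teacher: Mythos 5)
Your proposal is correct and follows essentially the same route as the paper: differentiate the Picard iterates of $\mathcal{T}$ using \cref{lem:differentiability_Itauxi} and the $C^1$-dependence of characteristics and intersection times from \cref{lem:char_stability}, control the derivative sequence via \cref{lem:YnuZnu} with the smallness of $L_{F_2}$ absorbing the boundary coupling, and obtain \eqref{sl_yx_nodecond} by applying $\partial_t + s_j\partial_{\x}$ to \eqref{F_ij_coupling}, substituting $\bar y_t = \bar f - \bar A^{(\bar z,\xs)}\bar y_{\x}$, and inverting the factor $s_j - \bar\lambda_i$. The only cosmetic difference is that you first construct $\bar v$ as the broad solution of the derived system and then identify it with the limit of $(\bar y_\nu)_{\x}$ via a parallel iteration, whereas the paper proves directly that the derivative sequence is Cauchy in $\pcont$; the estimates involved are the same.
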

\begin{proof}
The proof in a smooth regime without shock curves can be found in \cite[Theorem 3.6]{Bressan2005}.
We extend the proof to the setting of this paper.

By $\mathcal{T}$ we again denote the operator from the proof of \cref{thm:existence_broadsol_sl},
see \eqref{def_operator_case1} and \eqref{def_operator_case3}.
Recall that the proof of \cref{thm:existence_broadsol_sl} 
implies $\bar{y} = \lim_{\nu \rightarrow \infty} \bar{y}^{[\nu]}$
in $\pcont$ for a sequence generated by $\bar{y}^{[0]} \in \pcont$ and 
$\bar{y}^{[\nu+1]} = \mathcal{T}(\bar{y}^{[\nu]})$ for $\nu \in \N_0$.
Now suppose $\bar{y}^{[0]} \in \pdiff$. 
Due to \cref{lem:differentiability_Itauxi},
the auxiliary source term $\bar{h}_i$ from \eqref{hi_bar_broadsol}
is continuously differentiable w.r.t.\ $(t, \x)$.
Together with the differentiability properties of the characteristic curves
and the intersection times $t_i^j$ (see \cref{lem:char_stability}),
this implies that if $\bar{y}^{[\nu]} \in \pdiff$ then also $\bar{y}^{[\nu+1]} \in \pdiff$
and thus $( \bar{y}^{[\nu]} )_{\nu \in \N} \subset \pdiff$.

We now prove that also $\y \in \pdiff$. 
For this, fix $i \in [n]$ and $(t, \x) \in \dom_j$ for some $j \in [n]_0$. 
By definition of $\mathcal{T}$ it holds with the initial value denoted by $\varphi$ that
\begin{equation} \label{iter_generalform}
\y_i^{[\nu + 1]}(t, \x) = \varphi(t, \x) 
+ \int_{t_i(t,\x)}^t \big[ l_i(\bar{z})^\top \bar{f}(\y^{[\nu]}) 
+ ( \frac{d}{ds} l_i(\bar{z}) )^ \top \y^{[\nu]} \big] \intd s ,
\end{equation}
where the integrand is evaluated in $(s, \x_i(s;t,\x))$.
Since this representation implies
$ (\y^{[\nu + 1]}_i)_t + \bar{\lambda}^{(\bar{z}, \xs)}_i (\y^{[\nu + 1]}_i)_{\x}
= l_i^\top \bar{f}(\y^{[\nu]}) + (l_{i,t} + \bar{\lambda}^{(\bar{z}, \xs)}_i l_{i,\x})
	^ \top \y^{[\nu ]}$,
it can be checked that
\begin{equation} \label{pde_during_iter}
\y^{[\nu + 1]}_t + \bar{A}^{(\bar{z}, \xs)} \y^{[\nu + 1]}_{\x}
= \bar{f}(\y^{[\nu]}) + \sum_{i=1}^n 
\left( (l_{i,t} + \bar{\lambda}^{(\bar{z}, \xs)}_i l_{i,\x}) ^\top (\y^{[\nu]} - \y^{[\nu + 1]}) \right) r_i
\end{equation}
holds in the classical sense on all $\dom_j$
with the abbreviations $l_i = l_i(\bar{z})$ and $r_i = r_i(\bar{z})$.

Recall that the proof of \cref{thm:existence_broadsol_sl} implies that
$\| \y^{[\nu+1]} - \y^{[\nu]}\|_{\pcont} \leq C q^{\nu}$ for some fixed $q \in (\frac34, 1 )$
and a generic constant $C>0$ if $T>0$ is sufficiently small.
Note that the exponential scaling \eqref{rescalednorm1} can also be used here to
allow for larger $T > 0$. For convenience, however, this is not done in this proof.
In particular, this shows that $\| \y^{[\nu]}\|_{\pcont}$ is uniformly bounded for all $\nu \in \N$. 
Rearranging \eqref{pde_during_iter} shows that
$\| \y^{[\nu]}_t \|_{\pcont} \leq C (1 + \| \y^{[\nu]}_{\x} \|_{\pcont} )$. 
Define for $t \in [0,T]$ and all $\nu \in \N$
\begin{equation} \label{def_ZnuYnu}
Y_{\nu}(t) = \max_{i \in [n]} \| (\y^{[\nu]}_{i})_{\x} \|_{PC^0(\dom^t)}, \quad
Z_{\nu}(t) = \max_{i \in [n]} 
\| (\y^{[\nu + 1]}_{i})_{\x} - (\y^{[\nu]}_{i})_{\x} \|_{PC^0(\dom^t)} .
\end{equation}
Note that $\| \y^{[\nu]}_{\x} \|_{PC^0(\dom^t)} \leq C(1 + Y_{\nu}(t) )$.
Choosing $l(s,t, \x) = l_i \left( s, \x_i(s; t, \x) \right)$ and
$u(s,t, \x) = \y^{[\nu]} \left( s, \x_i(s; t, \x) \right)$
in \cref{lem:differentiability_Itauxi} and using that
\begin{align*}
	l_s ^\top u_{\x} - l_{\x} ^\top u_s 
	& = \Big( \frac{\partial l_i}{\partial t} 
		+ \frac{\partial l_i}{\partial x} \bar{\lambda}^{(\bar{z}, \xs)}_i \Big)
	^\top \y^{[\nu]}_{\x} \frac{\partial \x_i}{\partial \x}
	- \frac{\partial l_i ^\top}{\partial \x} \frac{\partial \x_i}{\partial \x} 
	\Big( \y^{[\nu]}_{t} + \y^{[\nu]}_{\x} \bar{\lambda}^{(\bar{z}, \xs)}_i \Big) \\
	& = \frac{\partial l_i ^\top}{\partial t}  \y^{[\nu]}_{\x} \frac{\partial \x_i}{\partial \x}
	- \frac{\partial l_i ^\top}{\partial \x} \y^{[\nu]}_{t} \frac{\partial \x_i}{\partial \x} ,
\end{align*}
it follows from differentiating \eqref{iter_generalform} w.r.t.\ $\x$ that
\begin{equation} \label{Iter_xderivative}
\begin{aligned}
\y & ^{[\nu + 1]}_{i, \x} 
= \varphi_{\x}  
+ \int_{t_i}^t \Big[ \frac{\partial l_i}{\partial \x} ^\top \frac{\partial \x_i}{\partial \x} \bar{f}(\y^{[\nu]})
+ l_i ^\top \Big( \bar{f}_{\x}(\y^{[\nu]}) \frac{\partial \x_i}{\partial \x} 
+ \bar{f}_{\y}(\y^{[\nu]}) \y^{[\nu]}_{\x} \frac{\partial \x_i}{\partial \x} \Big) \Big] \intd s \\
& + \int_{t_i}^t \Big[ \frac{\partial l_i}{\partial t}^\top \y^{[\nu]}_{\x} \frac{\partial \x_i}{\partial \x}
- \frac{\partial l_i ^\top}{\partial \x} \y^{[\nu]}_{t} \frac{\partial \x_i}{\partial \x} \Big] \intd s 
- \Big( \big( \frac{\partial l_i}{\partial t} 
+ \bar{\lambda}^{(\bar{z}, \xs)}_i \frac{\partial l_i}{\partial \x} \big)^\top 
\y^{[\nu]} \Big) (t_i, \x_i(t_i)) \frac{\partial t_i}{\partial \x} \\
& - \big( l_i^\top \bar{f}(\y^{[\nu]}) \big) (t_i, \x_i(t_i)) \frac{\partial t_i}{\partial \x}
+ \Big( \frac{\partial l_i}{\partial \x} ^\top \y^{[\nu]} \Big)(t, \x) 
- \Big( \frac{\partial l_i}{\partial \x} ^\top \y^{[\nu]} \Big)(t_i, \x_i(t_i)) 
\frac{\partial \x_i(t_i; t,\x)}{\partial \x}
\end{aligned}
\end{equation}
on $\dom_j$, with the abbreviations $t_i = t_i(t,\x)$, $\x_i = \x_i(\cdot; t,\x)$. This implies
\begin{equation} \label{Ynu_i_t}
| \y^{j, [\nu + 1]}_{i, \x} (t, \x) | \leq
\| \varphi_{\x} \| + C + C \int_{t_i}^t Y_{\nu}(s) \intd s.
\end{equation}
If $i \in \{0, n \}$, then $t_i = 0$ and 
$\varphi(t, \x) \in \{ u_{l,i}(\x_i(0;t,\x)), u_{r,i}(\x_i(0;t,\x))\}$. Hence,
$ | \varphi_{\x} | \leq C ( \| u_l \|_{C^1} + \| u_r \|_{C^1} )$.
If $(t, \x) \in \dom_j$ for $j \in [n-1]$,
we assume w.l.o.g.\ that $i > j$. Then
$\varphi(t, \x) = F_i^j \big( t_i, \y^{j - 1, [\nu + 1]}_{i}(t_i, s_j t_i),
\y^{j-1, [\nu]}(t_i, s_j t_i), \y^{j, [\nu]}(t_i, s_j t_i) \big)$ and
\begin{equation}
|\varphi_{\x}(t, \x)| \leq C \left| \frac{\partial t_i}{\partial \x} \right|  
\left( \|\partial_1 F_i^j\| + L_{F_1} | \y^{j-1, [\nu + 1]}_{i, \x}(t_i, \x_i)|
+ L_{F_2} Y_{\nu}(t_i) \right) 
\end{equation}
with the Lipschitz constants $L_{F_1}, L_{F_2}$ as in \eqref{lipschitz_Fij}.
Note that $Y_{\nu}$ is monotone non-decreasing by construction
and therefore, $Y_{\nu}(t_i) \leq Y_{\nu}(t)$.
Repeatedly applying \eqref{Ynu_i_t} to replace $| \y^{j-1, [\nu + 1]}_{i, \x}(t_i, \x_i)|$
in the previous estimate until we arrive on $\dom_0$ yields
\begin{equation} \label{pc1_Ynu_estimate}
Y_{\nu + 1}(t) \leq C + C \int_0^t Y_{\nu}(s) \intd s 
+ \tilde{C}_1 \sum_{k=0}^{n-1} L_{F_2}^k Y_{\nu}(t)
\quad \forall t \in [0,T], \ \forall n \in \N.
\end{equation}
By assumption, $L_{F_2}$ is small and
we may assume that $C_1 = \tilde{C}_1 \sum_{k=0}^{n-1} L_{F_2}^k \leq \frac12$.
Choosing $m = 1$ in \cref{lem:YnuZnu} shows $\sup_{\nu \in \N} Y_{\nu}(T) < \infty$,
showing that $(\y^{[\nu]}_{\x})_{\nu \in \N}$ is bounded in $\pcont$.
Rearranging \eqref{pde_during_iter} yields the same for $(\y^{[\nu]}_{t})_{\nu \in \N}$.
Taking the difference of \eqref{Iter_xderivative} for $\nu + 1$ and $\nu$
implies with the same argument as in \eqref{pc1_Ynu_estimate} that
\begin{equation} \label{pc1_Znu_estimate}
Z_{\nu + 1}(t) \leq C q ^{\nu} + C \int_0^t Z_{\nu}(s) \intd s 
+ \frac12 Z_{\nu}(t)
\quad \forall t \in [0,T], \ \forall n \in \N.
\end{equation} 
Now applying \cref{lem:YnuZnu} with $m = q < 1$ implies that 
$Z_{\nu}(T) \leq C q^{\nu}$ for all $\nu \in \N$. 
By defintion \eqref{def_ZnuYnu} of $Z_{\nu}$, this proves that
$( \bar{y}^{[\nu]}_{\x} )_{\nu \in \N} \subset \pcont$ is a Cauchy sequence.
Rearranging \eqref{pde_during_iter} implies the same for $( \bar{y}^{[\nu]}_{t} )_{\nu \in \N}$.
Therefore, $( \bar{y}^{[\nu]})_{\nu \in \N} \subset \pdiff$ is convergent and $\y \in \pdiff$.
From the fact that $\y$ satisfies the characteristic equations and is continuously differentiable,
it can be easily deduced that $\y$ solves \eqref{problem_ref_sl_local} in the classical sense,
see again the proof of \cite[Theorem 3.6]{Bressan2005}. 

To prove that $\bar{y}_{\x}$ is a broad solution of \eqref{pde_sl_yx},
the associated integral equations must be satisfied. 
This is analogous to \cite[Theorem 3.6]{Bressan2005} or \cite{Douglis1952} and omitted here.

To derive the interior boundary condition for $\bar{y}_{\x}$, consider the
$j$-shock and a field $i > j$. It is easy to verify that for $k \in \{ j-1, j \}$ it holds
with $( \bar{y}^k_{\x} )_i \coloneqq  l_i(\bar{z}^k)^\top \bar{y}^k_{\x} $ that
\begin{equation}
\frac{\mathrm{d}}{\dt} \bar{y}^k_i (t, s_j t) = 
\big( \nabla l_i(\bar{z}^k) (\bar{z}^k_t + s_j \bar{z}^k_{\x}) \big) ^\top \bar{y}^k
+ l_i(\bar{z}^k)^\top \bar{y}^k_t + s_j ( \bar{y}^k_{\x} )_i.
\end{equation}
Differentiating $\bar{y}^{j}_i(t, s_j t) = F_i^j ( t, \bar{y}^{j-1}_i (t, s_j t),
\bar{y}^{j-1} (t, s_j t), \bar{y}^{j} (t, s_j t)  ) $
w.r.t.\ time (and neglecting the argument $(t, s_j t)$) results in
\begin{equation*}
\begin{aligned}
& \big( \nabla l_i(\bar{z}^j) (\bar{z}^j_t + s_j \bar{z}^j_{\x}) \big) ^\top \bar{y}^j
+ l_i(\bar{z}^j)^\top \bar{y}^j_t + s_j ( \bar{y}^j_{\x} )_i \\
& = (F_i^j)_t + \partial_2(F_i^j) 
\Big( \big( \nabla l_i(\bar{z}^{j-1}) (\bar{z}^{j-1}_t + s_j \bar{z}^{j-1}_{\x}) \big) ^\top \bar{y}^{j-1} + l_i(\bar{z}^{j-1})^\top ( \bar{y}^{j-1}_t + s_j \bar{y}^{j-1}_{\x} ) \Big) \\
& + \partial_3(F_i^j) \big( \bar{y}^{j-1}_t + s_j \bar{y}^{j-1}_{\x} \big)
+ \partial_4(F_i^j) \big( \bar{y}^{j}_t + s_j \bar{y}^{j}_{\x} \big).
\end{aligned}
\end{equation*}
Next, we insert
$\bar{y}^k_t = \bar{f}^k(\cdot, \bar{y}^k) - \bar{A}^{(\bar{z}^k, \xs)} \bar{y}^{k}_{\x}$ 
into the above equality and
\begin{equation*}
l_i(\bar{z}^j)^\top \bar{y}^j_t = l_i(\bar{z}^j)^\top 
\bar{f}^j(\cdot, \bar{y}^j) - l_i(\bar{z}^j)^\top 
	\bar{A}^{(\bar{z}^j, \xs)} \bar{y}^{j}_{\x}
= l_i(\bar{z}^j)^\top \bar{f}^j(\cdot, \bar{y}^j) 
- \bar{\lambda}^{( \bar{z_j}, \xs)} _i ( \bar{y}^j_{\x} )_i
\end{equation*}
to obtain with all quantities evaluated in $(t, s_j t)$ that
\begin{equation*}
\begin{aligned}
& \big( \nabla l_i(\bar{z}^j) (\bar{z}^j_t + s_j \bar{z}^j_{\x}) \big) ^\top \bar{y}^j
+ l_i(\bar{z}^j)^\top \bar{f}^j(\cdot, \bar{y}^j) 
+ (s_j - \bar{\lambda}^{( \bar{z_j}, \xs)} _i ) ( \bar{y}^j_{\x} )_i \\
& = (F_i^j)_t + \partial_2(F_i^j) 
\Big( \big( \nabla l_i(\bar{z}^{j-1}) (\bar{z}^{j-1}_t + s_j \bar{z}^{j-1}_{\x}) \big) ^\top \bar{y}^{j-1} \Big) \\
& + \partial_2(F_i^j) \big( l_i(\bar{z}^{j-1})^\top \bar{f}^{j-1}(\cdot, \bar{y}^{j-1}) 
+ (s_j-\bar{\lambda}^{( \bar{z}^{j-1}, \xs)}_i) ( \bar{y}^{j-1}_{\x} )_i \big) \\
& + \partial_3(F_i^j) \big( \bar{f}^{j-1}(\cdot, \bar{y}^{j-1}) 
+ (s_j - \bar{A}^{(\bar{z}^{j-1}, \xs)}) \bar{y}^{{j-1}}_{\x} \big) \\
& + \partial_4(F_i^j) \big( \bar{f}^{j}(\cdot, \bar{y}^{j}) 
+ (s_j - \bar{A}^{(\bar{z}^j, \xs)}) \bar{y}^{j}_{\x} \big).
\end{aligned}
\end{equation*}
Finally, rearranging the above identity proves \eqref{sl_yx_nodecond}.
\end{proof}

\begin{remark} \label{rem:nodecond_yx_slassumptions}
Note that the semilinear system \eqref{pde_sl_yx} with interior
boundary conditions \eqref{K_ij_coupling} satisfies the assumptions
of \cref{thm:existence_broadsol_sl} by possibly assuming the Lipschitz constant 
$L_{F_2}$ to be smaller. In particular, the $PC^0$-bound in \cref{thm:sl_pc0_bound}
and the continuous dependence on the data from \cref{thm:sl_continuous_dep_data} apply.
\end{remark}

\subsection{The Quasilinear Problem} \label{sec:qlproblems}
In this section, we analyze 
the quasilinear problem in the reference space
which is given by \eqref{problem_ref_sl} with $\y = \bar{z}$. 
Using the results from \cref{sec:slproblems},
we prove various stability properties for piecewise
classical solutions of the quasilinear problem w.r.t.\ the initial state.

The existence of a piecewise classical entropy solution 
of the GRP in physical coordinates is proved in \cite{LiYu1985}.

\begin{theorem} \label{thm:grp_sol_physcoord}
Assume the jump $|u_L - u_R|$ and the constants $M_0, M_1, \varepsilon > 0$
in the definition \eqref{def_Uad} of $\Vad$ are sufficiently small.  
Then there is a time $T>0$ such that \eqref{GRP} has a unique 
piecewise $C^1$ entropy solution $y \in BV(\D; \R^n)$ 
for all initial states $u_0$ as in \eqref{initial_data_GRP} with $(u_l, u_r, \xs) \in \Uad$.
There are curves $\xi_k \in C^2([0,T])$ for all $k \in [n]$ with 
$\xi_k(0)=\xs$ and $(t, \xi_k(t)) \in \D$ for all $t \in [0,T]$ such that the
entropy condition \eqref{entropy_cond_speeds} is satisfied.
Moreover, there is a uniform $C^1$-bound of the smooth parts of the solutions
for all $(u_l, u_r, \xs) \in \Uad$ with $\Uad$ from \eqref{Uad_physicalcoord}.

If $y$ solves \eqref{GRP} with the initial state $(u_l, u_r, 0) \in \Uad$, 
then $\tilde{y}(x/t)$ with the principal part 
$\tilde{y}(\xi) = \lim_{t \rightarrow 0+} y(t, \xi t)$ solves the 
associated Riemann problem \eqref{RP} with the piecewise constant initial state 
$(u_l(0 -), u_r(0 +))$.
\end{theorem}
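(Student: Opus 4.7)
The plan is to reduce the quasilinear GRP to a fixed-point problem in the reference space of \cref{sec:refspace} for the transformed system in $\pdiff$. Using the Riemann solution for $(u_L,u_R)$ to fix the sectors $\dom_0,\ldots,\dom_n$ with shock speeds $s_1<\ldots<s_n$, I seek $\bar y\in\pdiff$ satisfying
\begin{equation*}
\bar y_t+\bar A^{(\bar y,\xs)}\bar y_x=\bar g^{(\bar y,\xs)}(\cdot,\bar y)\quad\text{on }\dom,\qquad \bar y^0(0,\cdot)=u_l,\;\bar y^n(0,\cdot)=u_r,
\end{equation*}
coupled on each $\Sigma_j$ by interior boundary conditions encoding the Rankine--Hugoniot jump via \cref{lem:GRP_jump_equivalent}. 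Transforming back through \cref{lem:lemma_rescaledsolution_Dj} then produces the piecewise-$C^1$ entropy solution in physical coordinates, with shock curves given by \eqref{xij_curve_shock}.

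First I would decouple the quasilinearity by fixing $\bar z\in\pdiff$ in $\bar A^{(\bar z,\xs)}$, $\bar g^{(\bar z,\xs)}$, and in the interior conditions. The $F_i^j$ arise by solving \eqref{GRP_jump} at $\Sigma_j$ for the components $l_i(\bar y^j)^\top\bar y^j$ with $i\neq j$: near the base Riemann state these equations are solvable via the Implicit Function Theorem with Lipschitz constants fulfilling the hypotheses of \cref{thm:existence_broadsol_sl}, provided the jump and $M_0,M_1,\varepsilon$ are small. \Cref{thm:existence_broadsol_sl} then yields a broad solution $\Psi(\bar z)$, which \cref{thm:c1_sol_sl} upgrades to a $\pdiff$-solution. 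Combining \cref{thm:sl_continuous_dep_data} with the $PC^0$-bound \cref{thm:sl_pc0_bound} applied both to $\bar y$ and, via \cref{thm:c1_sol_sl}, to $\bar y_{\x}$, one shows that $\Psi$ maps a ball in $\pdiff$ into itself and contracts in the $\pcont$-norm for sufficiently small $T>0$, producing a fixed point. The entropy condition \eqref{entropy_cond_Abar} is enforced along the iteration by shrinking the ball so the shock speeds $\dot\xi_j^{(\bar y,\xs)}(t)$ remain close to $s_j$.

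For the uniform $C^1$-bound on $\Uad$, all constants in \cref{thm:sl_pc0_bound} depend only on $\|\bar z\|_{PC^1(\dom)}$, the Lipschitz constants of $\bar g$ and of the $F_i^j$, and on the $C^1$-norm of the initial data, all of which are controlled uniformly over $\Uad$ by \eqref{def_Uad}. Applying the same bound to the semilinear system \eqref{pde_sl_yx} for $\bar y_{\x}$ yields a uniform $PC^0$-bound on $\bar y_{\x}$; pulling back via \cref{lema:spacetrafo_properties} gives uniform $C^1$-bounds on the smooth parts of $y$ in physical coordinates. The regularity $\xi_k\in C^2([0,T])$ follows immediately from \eqref{xij_curve_shock} since $\bar y\in\pdiff$, and $\xi_k(0)=\xs$ by construction.

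For the principal-part claim with $\xs=0$, I would use self-similar scaling: the rescaled state $y_\epsilon(t,x)\coloneqq y(\epsilon t,\epsilon x)$ solves $(y_\epsilon)_t+f(y_\epsilon)_x=\epsilon\,g(\epsilon\cdot,y_\epsilon)$ with initial data $u_0(\epsilon\cdot)$, which converges in $L^1_{\mathrm{loc}}$ to the piecewise constant state $\mathbf{1}_{\{x<0\}}u_l(0-)+\mathbf{1}_{\{x>0\}}u_r(0+)$ as $\epsilon\to 0+$. By the uniqueness asserted in \cref{thm:thm_sol_RP} and stability of entropy solutions under $L^1$-convergence of initial data within the piecewise-smooth class constructed above, $y_\epsilon$ converges to the Riemann solution $\hat y(t,x)=\tilde y(x/t)$; evaluating at $(1,\xi)$ gives $\tilde y(\xi)=\lim_{t\searrow 0}y(t,\xi t)$. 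The main obstacle throughout is ensuring that the time horizon $T$ and every constant entering the contraction argument can be chosen uniformly over $\Uad$, which is exactly where the quantitative form of \cref{thm:sl_pc0_bound} and the uniform bounds in \eqref{def_Uad} are essential.
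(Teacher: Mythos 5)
Your proposal takes a genuinely different route from the paper. The paper proves this theorem by citation: existence, uniqueness, the uniform $C^1$-bound, and the principal-part property are all imported from Li--Yu's monograph on the generalized Riemann problem (their Chapter~6, Theorem~5.1 and Lemma~1.1, together with remarks that relax the regularity of $f$, $g$ and the initial data to the $C^2$/$C^1$ level assumed here and give a uniform existence time). You instead reconstruct the result from the paper's own semilinear machinery: freeze $\bar z$ in $\bar{A}^{(\bar z,\xs)}$, $\bar g^{(\bar z,\xs)}$ and in the jump conditions, solve the resulting semilinear problem via \cref{thm:existence_broadsol_sl} and \cref{thm:c1_sol_sl}, and close a fixed-point argument in $\pdiff$ with contraction measured in $\pcont$. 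This is essentially the Li--Yu strategy made explicit inside the reference space, and it buys self-containedness at the cost of redoing a substantial amount of work; the paper's choice buys brevity at the cost of asking the reader to check that the cited theorems really cover functional-form source terms and the $C^1$ data class.

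Two steps of your construction need repair. First, a $\pdiff$-ball is not closed in the $\pcont$-norm: Arzel\`a--Ascoli only gives that its $\pcont$-closure consists of Lipschitz functions, so a contraction in $\pcont$ produces a fixed point that is a priori only piecewise Lipschitz. You must additionally show that the derivatives of the iterates converge (or are equicontinuous), exactly as the paper does for the semilinear Picard iteration in the proof of \cref{thm:c1_sol_sl} via \cref{lem:YnuZnu}; without this the claimed $\bar y\in\pdiff$, and hence $\xi_k\in C^2$, does not follow. Second, the principal-part argument by self-similar rescaling is not closed: to pass to the limit in $y_\epsilon(t,x)=y(\epsilon t,\epsilon x)$ you invoke ``stability of entropy solutions under $L^1$-convergence of initial data,'' but no such stability theorem is available at this point (the $L^1$-Lipschitz dependence in \cref{cor:lipschitzL1} is derived later and rests on the present theorem), and the uniqueness in \cref{thm:thm_sol_RP} only identifies self-similar solutions, which the limit of $y_\epsilon$ is not known to be a priori. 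A direct argument avoids this: at $t=0$ each middle sector $\dom_j$ collapses to the origin, so the interior boundary conditions \eqref{G_ij_coupling}, \eqref{intbdary_cond_GRP} reduce to the algebraic system \eqref{GRP_jump} linking the traces $\bar y^0(0,0)=u_l(0-),\bar y^1(0,0),\ldots,\bar y^n(0,0)=u_r(0+)$, which by \cref{ass:Uad1} and the Implicit Function Theorem has the intermediate states of the Riemann solution as its unique solution; this identifies the principal part without any compactness or stability argument.
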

\begin{proof}
If $|u_L - u_R|$ and $M_0, M_1, \varepsilon > 0$ are sufficiently small, the jumps of 
all initial states $(u_l, u_r, \xs) \in \Uad$ are small.
If even $u_l , u_r \in C^{2,1}$ and higher regularity of the data is assumed,
the unique existence of a piecewise $C^{2,1}$ solution
of \eqref{GRP} follows from \cite[Chapter 6, Theorem 5.1]{LiYu1985}.
By construction of $\Vad$, all associated Riemann Problems do not contain
rarefaction waves, see \cref{ass:Uad1}. 
With this, \cite[Chapter 6, Remark 3.1]{LiYu1985} shows
that the result remains valid if only $f \in C^2$. 
Checking \cite[Chapter 4, Theorem 4.1]{LiYu1985}
shows that it is sufficient if the source term and the piecewise smooth
initial states are only $C^1$. Then, the solution of \eqref{GRP} is only piecewise $C^1$. 
A uniform existence time $T>0$ follows by \cite[Chapter 2, Remark 4.3]{LiYu1985} 
and a uniform $C^1$-bound of the smooth parts of the solutions
for all controls $(u_l, u_r, \xs) \in \Uad$ follows by 
\cite[Chapter 2, Theorem 2.2]{LiYu1985} 
and \cite[Chapter 1, Theorem 4.1]{LiYu1985}. 
The asserted property of the principal part is shown in \cite[Chapter 6, Lemma 1.1]{LiYu1985}.
\end{proof}

By \cref{lem:lemma_rescaledsolution_Dj}, the GRP can be equivalently 
solved in the reference space.
In the sequel, we construct the associated problem in the reference space.
First, observe with \cref{lem:GRP_jump_equivalent}
that the interior boundary condition for $i > j$ is given by
\begin{equation} \label{G_ij_coupling}
	\begin{aligned}
		\bar{y}^{j}_i (t, s_j t) &= G_i^j \big( t, \bar{y}^{j-1}_i (t, s_j t),
		\bar{y}^{j-1} (t, s_j t), \bar{y}^{j} (t, s_j t)  \big) && i>j, \\
		\bar{y}^{j-1}_i (t, s_j t) &= G_i^j \big( t, \bar{y}^{j}_i (t, s_j t),
		\bar{y}^{j-1} (t, s_j t), \bar{y}^{j} (t, s_j t) \big) && i<j,
	\end{aligned}
\end{equation}
for all $t \in [0,T]$,
$\bar{y}^{j-1}_i = l_i(\bar{y}^{j-1})^\top \bar{y}^{j-1}$ and
$\bar{y}^{j}_i = l_i(\bar{y}^{j})^\top \bar{y}^{j}$. 
For $i > j$, $G_i^j$ is given by
\begin{equation} \label{intbdary_cond_GRP}
G_i^j(t, v, v^1, v^2) = v + \big( l_i(v^2) - l_i(v^1) \big)^\top v^1
- \sum_{k \neq i} \frac{ l_i ( v^1 , v^2 ) ^\top r_k(v^2)}
{l_i ( v^1 , v^2 ) ^\top r_i(v^2)} l_k(v^2)^\top (v^2 - v^1) .
\end{equation}
The case $i < j$ is analogous. 

The initial state 
$(\urefinit_l, \urefinit_r) \in \Vref \coloneqq C^1 \left( [-\el, 0]; \R^n \right) 
\times C^1 \left( [0, \el]; \R^n \right)$ in the reference space
associated to a control $u = (u_l, u_r, \xs) \in \Uad$ in physical coordinates is
\begin{equation} \label{trafo_initstate}
\urefinit_l(\x) = u_l \Big( \x + \frac{\x+\el}{\el} \xs \Big)
\ \ \forall \x \in [-\el, 0], \quad
\urefinit_r(\x) = u_r \Big( \x - \frac{\x-\el}{\el} \xs  \Big)
\ \ \forall \x \in [0,\el].
\end{equation}
This dilation operation is denoted by
\begin{equation} \label{def_dilation}
\begin{aligned}
\mathbf{D}_l: (u_l, \xs) \in C^1([-\el, \varepsilon]; \R^n) \times (-\varepsilon, \varepsilon)
& \mapsto \urefinit_l \in C^1([-\el, 0]; \R^n), \\ 
\mathbf{D}_r: (u_r, \xs) \in C^1([-\varepsilon, \el]; \R^n) \times (-\varepsilon, \varepsilon)
& \mapsto \urefinit_r \in C^1([0, \el]; \R^n).
\end{aligned}
\end{equation}

\begin{corollary} \label{cor:sol_grp_pc1_refspace}
Let the assumptions of \cref{thm:grp_sol_physcoord} be satisfied.
Then there is $T>0$ such that for all $(u_l, u_r, \xs) \in \Uad$
there is a unique solution $\bar{y} \in \pdiff$ of 
\begin{equation} \label{problem_ref_ql} 
\bar{y}_t + \bar{A}^{(\bar{y}, \xs)} \bar{y}_x = \bar{g}^{(\bar{y}, \xs)}
\quad \text{on } \dom, \quad \bar{y}^0(0, \cdot) = \mathbf{D}_l(u_l, \xs), 
\ \bar{y}^n(0, \cdot) = \mathbf{D}_r(u_r, \xs),
\end{equation}
with $\bar{A}^{(\bar{y}, \xs)}$ from \eqref{A_bar_z},
and the source term in functional form given by
\begin{equation} \label{sourceterm_GRP_functionalform}
\begin{aligned}
(\bar{y}, \xs) \in \pdiff \times (-\varepsilon, \varepsilon)
& \mapsto \bar{g}^{(\bar{y},\xs)} \in \pdiff \\ 
\bar{g}^{(\bar{y},\xs)} (t, \x)  & \coloneqq
g(t, x^{(\bar{y}, \xs)}(t,\x), \bar{y}(t, \x) )  
\end{aligned}
\end{equation}
with $x^{(\bar{y}, \xs)}$ from \eqref{x_trafo_middle}, \eqref{x_trafo_left}.
The interior boundary condition is \eqref{G_ij_coupling}, \eqref{intbdary_cond_GRP}.
\end{corollary}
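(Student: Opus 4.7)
The plan is to derive the reference-space existence from the physical-space result of \cref{thm:grp_sol_physcoord} via the coordinate transformation of \cref{sec:refspace}. Given a control $(u_l, u_r, \xs) \in \Uad$, \cref{thm:grp_sol_physcoord} yields a piecewise $C^1$ entropy solution $y$ with shock curves $\xi_1, \ldots, \xi_n \in C^2([0,T])$ and $\xi_k(0) = \xs$. I would then define $\bar{y}$ directly by pulling back along an affine-in-$\x$ interpolation between these physical shock curves: for $j \in [n-1]$ set
\begin{equation*}
\phi^j(t,\x) \coloneqq \frac{\x - s_j t}{(s_{j+1}-s_j)t}\xi_{j+1}(t) + \frac{s_{j+1}t - \x}{(s_{j+1}-s_j)t}\xi_j(t), \qquad \bar{y}^j(t,\x) \coloneqq y(t, \phi^j(t,\x)),
\end{equation*}
with analogous definitions on $\dom_0, \dom_n$ using the outer reference lines $-\el + s_\ell t$ and $\el + s_r t$.

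The key self-consistency step is to identify these $\xi_j$ with the control-dependent curves $\xi_j^{(\bar{y},\xs)}$ from \eqref{xij_curve_shock}, so that $\phi^j$ coincides with the transformation $x^{(\bar{y},\xs),j}$ of \cref{def:def_spacetrafo}. The physical shock curves satisfy Rankine--Hugoniot, hence by \eqref{rh_speed} one has $\dot{\xi}_j(t) = \lambda_j(y^-(t), y^+(t))$ where $y^{\pm}$ are the one-sided limits at $\xi_j(t)$. By construction of $\bar{y}^{j-1}, \bar{y}^j$ these limits equal $\bar{y}^{j-1}(t, s_j t)$ and $\bar{y}^j(t, s_j t)$, so integrating and using $\xi_j(0) = \xs$ gives $\xi_j^{(\bar{y},\xs)} = \xi_j$; similarly for $\xi_\ell^{(\bar{y},\xs)}$ and $\xi_r^{(\bar{y},\xs)}$. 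This fixed-point identification is the main conceptual obstacle — it is what allows the transformation, which is a priori implicit in $\bar{y}$, to be made explicit through the already-available physical shock curves.

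To invoke \cref{lem:lemma_rescaledsolution_Dj}, I verify the smallness condition \eqref{smallness_ass_welldef_spacetrafo}. The uniform piecewise $C^1$-bound on $y$ provided by \cref{thm:grp_sol_physcoord} (holding over all of $\Uad$) transfers to a uniform bound on $\|\bar{y}\|_{PC^1(\dom)}$, since the derivatives of $\phi^j$ are controlled by the non-degeneracy in \eqref{nondegcondition_shockcurves}; by shrinking $T, M_0, M_1, \varepsilon$ and $|u_L - u_R|$ compatibly with \cref{ass:Uad1} and \cref{thm:grp_sol_physcoord}, \eqref{smallness_ass_welldef_spacetrafo} is satisfied. \Cref{lem:lemma_rescaledsolution_Dj} then asserts that $\bar{y}$ is a classical solution of \eqref{problem_ref_ql} on each $\dom_j$, with the source term of functional form \eqref{sourceterm_GRP_functionalform}. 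The regularity $\bar{y} \in \pdiff$ follows from the piecewise $C^1$-regularity of $y$ together with the $C^2$-regularity of the shock curves.

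Finally, the interior boundary conditions \eqref{G_ij_coupling} with $G_i^j$ given by \eqref{intbdary_cond_GRP} are exactly the reformulation of Rankine--Hugoniot provided by \cref{lem:GRP_jump_equivalent}, evaluated at the traces $\bar{y}^{j-1}(t, s_j t), \bar{y}^j(t, s_j t)$. The initial condition reduces at $t = 0$ to the affine dilation $\x \mapsto \x + \frac{\x+\el}{\el}\xs$ on $[-\el, 0]$ (and analogously on $[0, \el]$), yielding $\bar{y}^0(0, \cdot) = \mathbf{D}_l(u_l, \xs)$ and $\bar{y}^n(0,\cdot) = \mathbf{D}_r(u_r, \xs)$ by definition \eqref{def_dilation}. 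For uniqueness, any second $\pdiff$-solution of \eqref{problem_ref_ql} with the same control induces via the inverse transformation a piecewise $C^1$ entropy solution in physical coordinates satisfying \eqref{GRP}, which by the uniqueness part of \cref{thm:grp_sol_physcoord} must coincide with $y$; pulling back again shows the two reference solutions agree.
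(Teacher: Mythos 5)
The paper gives no explicit proof of this corollary, treating it as an immediate consequence of \cref{thm:grp_sol_physcoord}, the transformation equivalence in \cref{lem:lemma_rescaledsolution_Dj}, the jump reformulation in \cref{lem:GRP_jump_equivalent}, and the definition of the dilation operators; your proposal carries out exactly this intended argument, with the added (and correct) care of verifying a posteriori that the physical shock curves coincide with the implicitly defined reference curves $\xi_j^{(\bar{y},\xs)}$. The approach is the same as the paper's, and the details you supply are sound.
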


We denote the solution operator of the GRP in the reference space by
\begin{equation}
\Sref : (u_l, u_r, \xs) \in \Uad \mapsto \y \in \pdiff 
\quad \text{subject to } \y \text{ solves \eqref{problem_ref_ql}, \eqref{sourceterm_GRP_functionalform}}.
\end{equation}
Using the uniform $C^1$-bound of the piecewise smooth parts of the solution 
in physical coordinates stated in \cref{thm:grp_sol_physcoord} and 
by possibly reducing $T>0$, the space transformations have uniform bounds in $C^1$.
This implies that there exists a constant $C > 0$ depending on $\Uad$
such that it holds
\begin{equation} \label{ref_ql_pc1_apriori_bound}
	\| \Sref(u) \|_{\pdiff} \leq C \quad
	\forall u \in \Uad.
\end{equation}

Moreover, if $M_0,M_1, \varepsilon > 0$ are possibly reduced,
we can assume that the smallness assumptions
\eqref{smallness_ass_welldef_spacetrafo} hold for all solutions of \eqref{problem_ref_ql}
for all controls $(u_l, u_r, \xs) \in \Uad$.
This follows from the fact that the principal part of a solution of the GRP
coincides with the corresponding solution of the associated Riemann Problem
by \cref{thm:grp_sol_physcoord}. 
Noting \cref{rem:RP_soloperator_cont}, the piecewise constant parts of the
solution of a Riemann Problem depend continuously on its piecewise constant initial state.
Therefore, a reduction of $M_0,M_1, \varepsilon > 0$ allows the 
constant $c_{\max} > 0$ from the assumptions of \cref{lem:welldefinedness_spacetrafo} to be small.
This shows that the smallness assumptions \eqref{smallness_ass_welldef_spacetrafo} hold.

For our further considerations, we define for a suitable constant $c_y > 0$ the set
\begin{equation} \label{def_set_calM}
	\Sref(u) \in \mathcal{M} \coloneqq \{ \y \in \pcont \, : \, 
	\| \y \|_{\pcont} \leq c_y \}
	\quad
	\forall u = (u_l, u_r, \xs) \in \Uad.
\end{equation}
By possibly reducing $T>0$ and $M_0, M_1$ from \eqref{def_Uad},
the constant $c_y$ from \eqref{def_set_calM} can be assumed to be small.
In fact, any solution $y$ of \eqref{GRP} can be bounded by 
$$
\| y \|_{L^\infty(\D)} \leq \max \{ \| y(0,\cdot) \|_{L^{\infty}([-\el, \el])} 
+ \| \tilde{y} \|_{L^\infty}  \} + CT.
$$
Here, $\tilde{y}$ is the principal part of $y$
and $C$ is a constant resulting from the uniform $C^1$-bound of the smooth
parts of the solution. For both, see \cref{thm:grp_sol_physcoord}.
The principal part coincides with the solution of the associated Riemann Problem.
Since the Riemann Problems associated to all controls $u \in \Uad$
have small jumps, also the piecewise constant intermediate states are small.
Therefore, by possibly reducing $T>0$ and the constants $M_0, M_1$ from \eqref{def_Uad},
this shows that $\| y \|_{L^\infty(\D)} \leq c_y$ for a small constant $c_y > 0$
holds for all controls $u \in \Uad$. 

To apply the results of the previous section, the functions $G_i^j$
need sufficiently small Lipschitz constants w.r.t.\ $(v^1, v^2)$.
\begin{lemma} \label{lem:GRP_bdarycondition_small}	
	The functions $G_i^j$ from \eqref{intbdary_cond_GRP} 
	are continuously
	differentiable, and have small Lipschitz constants w.r.t.\ $(v^1, v^2)$
	on the domain defined by
	\begin{equation*}
		(t, v, v^1, v^2) \in 
		[0,T] \times [-c_y, c_y] \times [-c_y, c_y]^n \times [-c_y, c_y]^n.
	\end{equation*}
\end{lemma}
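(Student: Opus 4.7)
The plan is to establish the $C^1$ regularity of $G_i^j$ on a neighborhood of $(v^1,v^2)=(0,0)$, then compute the Jacobian in $(v^1,v^2)$ at that point, observe it vanishes, and conclude the smallness of the Lipschitz constant on the specified domain by continuity together with the freedom to shrink $c_y$.

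First I would verify that the averaged eigenvectors $l_i(v^1,v^2)$ and $r_i(v^1,v^2)$ inherit the $C^{1,1}$ regularity of the ``pointwise'' eigenvectors. The averaged matrix
$A(v^1,v^2)=\int_0^1 A\bigl(sv^1+(1-s)v^2\bigr)\,\intd s$
is $C^{1,1}$ in $(v^1,v^2)$, and by the assumed strict hyperbolicity it has simple real eigenvalues, so the Implicit Function Theorem gives continuously differentiable eigenvalues and eigenvectors on a neighborhood. I then check the denominator $l_i(v^1,v^2)^\top r_i(v^2)$: at $v^1=v^2=0$ it equals $l_i(0)^\top r_i(0)=1$ by the normalization $l_i^\top r_j=\delta_{ij}$, so by continuity it stays bounded away from zero on the ball $\max(|v^1|,|v^2|)\leq c_y$ when $c_y$ is small. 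This makes the fraction, and hence $G_i^j$, continuously differentiable on the stated domain; the regularity in $t$ and $v$ is trivial since $G_i^j$ is affine in $v$ and does not depend explicitly on $t$.

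The core of the argument is the smallness of the Lipschitz constant w.r.t.\ $(v^1,v^2)$. I would decompose
$G_i^j(t,v,v^1,v^2)=v+T_1(v^1,v^2)-T_2(v^1,v^2)$
with $T_1\coloneqq\bigl(l_i(v^2)-l_i(v^1)\bigr)^\top v^1$ and $T_2\coloneqq\sum_{k\neq i}c_{ik}(v^1,v^2)\,l_k(v^2)^\top(v^2-v^1)$, where $c_{ik}=l_i(v^1,v^2)^\top r_k(v^2)/l_i(v^1,v^2)^\top r_i(v^2)$, and evaluate $\partial_{(v^1,v^2)}G_i^j$ at the symmetric point $(0,0)$. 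For $T_1$, every term in its partial derivatives contains either the factor $v^1$ or the factor $l_i(v^2)-l_i(v^1)$, both of which vanish at the origin. For $T_2$, the decisive observation is that $c_{ik}(0,0)=l_i(0)^\top r_k(0)=\delta_{ik}=0$ for $k\neq i$ (again by biorthonormality, since $l_i(0,0)=l_i(0)$), while the second factor $l_k(v^2)^\top(v^2-v^1)$ also vanishes at $v^1=v^2=0$; applying the product rule shows that every term in $\partial_{(v^1,v^2)}T_2$ retains at least one of these two vanishing factors. Hence $\partial_{(v^1,v^2)}G_i^j\bigm|_{(0,0)}=0$.

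The conclusion is then a straightforward continuity argument: since $\partial_{(v^1,v^2)}G_i^j$ is continuous on a neighborhood of the origin and vanishes there, its operator norm on $\{(v^1,v^2):\max(|v^1|,|v^2|)\leq c_y\}$ tends to $0$ as $c_y\to 0$. The discussion preceding the lemma explicitly allows us to shrink $c_y$ by further restricting $T$ and $M_0,M_1$, so the Lipschitz constant w.r.t.\ $(v^1,v^2)$ can be made smaller than any prescribed threshold (in particular, smaller than the bound required in \cref{thm:existence_broadsol_sl} for $L_{F_2}$). The main technical obstacle is tracking the averaged quantities $l_i(v^1,v^2)$ carefully enough to confirm $c_{ik}(0,0)=0$ and that its derivatives remain bounded; everything else reduces to elementary product-rule calculations and the biorthonormality relation.
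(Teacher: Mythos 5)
Your proposal is correct and follows essentially the same route as the paper's (much terser) proof: both rest on the biorthonormality $l_i(0,0)^\top r_k(0)=\delta_{ik}$ forcing the off-diagonal coefficients to vanish at the origin, the denominator $l_i(v^1,v^2)^\top r_i(v^2)$ staying bounded away from zero for small $c_y$, and the remaining factors $v^1$, $l_i(v^2)-l_i(v^1)$, $v^2-v^1$ being $\mathcal{O}(c_y)$ on the stated domain. Your reformulation via the vanishing of $\partial_{(v^1,v^2)}G_i^j$ at the origin plus continuity of the derivative is just a cleaner packaging of the same estimate.
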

\begin{proof}	
	Since $c_y > 0$ is small, $| l_i ( v^1 , v^2 ) ^\top r_i(v^2) | \geq \frac12$ can be assumed.
	Moreover, $| l_i ( v^1 , v^2 ) ^\top r_k(v^2) | $ is arbitrary small for $k \neq i$.
	With this, it can be easily checked that the local Lipschitz constant
	of $G_i^j$ w.r.t.\ $(v_1, v_2)$ is small.
	
	The smoothness of $G_i^j$ follows from the smoothness of the eigenvectors $r_i, \, l_i$.
\end{proof}

The source term $\bar{g}^{(\bar{y}, \xs)}$ and the matrix $\bar{A}^{(\bar{y}, \xs)}$
in \eqref{problem_ref_ql} satisfy the following differentiability
properties.
\begin{lemma} \label{lem:GRP_sourceterm_diff}	
The source term $\bar{g}^{(\bar{y}, \xs)}$ from \eqref{sourceterm_GRP_functionalform} is
continuously Fr\'echet differentiable as a map 
$(\bar{y}, \xs) \in \mathcal{M} \times (-\varepsilon, \varepsilon)
\mapsto \bar{g}^{(\bar{y}, \xs)} \in \pcont$.
Moreover, the derivative $ d_{(\bar{y}, \xs)} \bar{g}^{(\bar{y}, \xs)}
\in \mathcal{L}(\pcont \times \R, \pcont)$
is uniformly bounded on $\mathcal{M} \times (-\varepsilon, \varepsilon)$.
\end{lemma}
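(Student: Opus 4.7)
The plan is to decompose the map into a composition of two continuously Fréchet differentiable operations and apply the chain rule. Writing
\[
\bar{g}^{(\bar{y},\xs)}(t, \x) = g\bigl(t, x^{(\bar{y}, \xs)}(t,\x), \bar{y}(t,\x)\bigr),
\]
I view this as (i) the map $(\bar{y}, \xs) \mapsto (\bar{y}, x^{(\bar{y}, \xs)})$ into $\pcont \times \pcont$, followed by (ii) the Nemytskii-type superposition $(u, v) \mapsto g(\cdot, v(\cdot), u(\cdot))$ from $\pcont \times \pcont$ to $\pcont$. The first component of (i) is the identity, and the second is continuously Fréchet differentiable from $\pcont \times \R$ into $PC^1(\dom) \hookrightarrow \pcont$ by \cref{lema:spacetrafo_properties}.

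For step (ii), note that $\bar{y} \in \mathcal{M}$ implies $|\bar{y}(t,\x)| \leq c_y$, and the representations \eqref{x_trafo_middle}, \eqref{x_trafo_left} together with \eqref{nondegcondition_shockcurves}, \eqref{nondegcondition_boundarycurves} confine $x^{(\bar{y}, \xs)}(t, \x)$ to a bounded subset of $\R$. Hence both arguments of $g$ vary in a fixed compact set $K$ on which $g_x, g_y$ are uniformly bounded and globally Lipschitz by virtue of $g \in C^{1,1}$. The Taylor-type bound
\[
\bigl|g(t, v + \delta v, u + \delta u) - g(t, v, u) - g_x\,\delta v - g_y\,\delta u\bigr| \leq L\bigl(|\delta u|^2 + |\delta v|^2\bigr)
\]
on $K$ shows that the superposition operator is Fréchet differentiable with the expected linear derivative $(\delta u, \delta v) \mapsto g_y(\cdot, v, u)\,\delta u + g_x(\cdot, v, u)\,\delta v$, and continuity of this derivative in $(u,v)$ follows from uniform continuity of $g_x, g_y$ on $K$.

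Applying the chain rule yields
\[
d_{(\bar{y}, \xs)} \bar{g}^{(\bar{y}, \xs)}\cdot(\delta \bar{y}, \delta \xs) = g_y\,\delta \bar{y} + g_x \cdot \bigl(d_{(\bar{y}, \xs)} x^{(\bar{y}, \xs)}\cdot(\delta \bar{y}, \delta \xs)\bigr),
\]
and the uniform operator-norm bound on $\mathcal{M} \times (-\varepsilon, \varepsilon)$ combines $\|g_x\|_{C^0(K)}, \|g_y\|_{C^0(K)} \leq C$ with the uniform bound \eqref{x_trafo_middle_Fdiff_normbound} for $d_{(\bar{y}, \xs)} x^{(\bar{y}, \xs)}$. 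The main technical point is the $C^1$-property of the superposition operator into $\pcont$; this is standard but it is precisely where the $C^{1,1}$ (rather than merely $C^1$) regularity of $g$ is used, to obtain the quadratic remainder needed for Fréchet (not just Gâteaux) differentiability. All remaining steps are direct applications of \cref{lema:spacetrafo_properties} or routine uniform estimates.
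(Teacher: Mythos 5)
Your proposal is correct and follows essentially the same route as the paper: the same chain-rule formula for the derivative, the same reliance on \cref{lema:spacetrafo_properties} and on \eqref{x_trafo_middle_Fdiff_normbound}, and the same compactness argument (boundedness of $\bar{y}$ on $\mathcal{M}$ and of $x^{(\bar{y},\xs)}$) to make the remainder estimate uniform; your explicit two-step decomposition into transformation plus Nemytskii operator is just a cleaner packaging of what the paper does directly. One minor remark: the $C^{1,1}$ regularity of $g$ is convenient but not strictly needed here --- uniform continuity of $g_x, g_y$ on the compact set already yields a uniform $o(\cdot)$ remainder, so $g \in C^1$ would suffice for the Fr\'echet differentiability claim.
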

\begin{proof}
Let $(\delta \y, \delta \xs) \in \pcont \times \R$.
For $j \in [n]_0$ and $(t, \x) \in \dom_j$ it holds that
\begin{equation} \label{d_gbar}
\big( d_{(\y, \xs)} \bar{g}^{(\bar{y}, \xs)} \cdot (\delta \y, \delta \xs) \big)(t, \x)
= g_x \cdot \big( d_{(\y, \xs)} x^{(\bar{y}, \xs),j} \cdot (\delta \y, \delta \xs) \big)(t, \x)  + g_y \cdot \delta \y(t, \x)
\end{equation}
where $g_x$ and $g_y$ are evaluated in $(t, x^{(\bar{y}, \xs),j}(t,\x), \bar{y}^j(t, \x))$.
With the differentiability properties \eqref{trafo_Fderivative}
of the space transformation,
it is easy to verify the pointwise estimate
\begin{equation*}
\big| \bar{g}^{(\bar{y} + \delta \y,\xs + \delta \xs)} 
- \bar{g}^{(\bar{y},\xs)} 
- \big( d_{(\y, \xs)} \bar{g}^{(\bar{y}, \xs)} \cdot (\delta \y, \delta \xs) \big) \big| (t, \x) 
= o( \| \delta \y \|_{\pcont} + | \delta \xs| ).
\end{equation*}
Since all arguments of $g$ remain in the compact set $\D \times \mathcal{Y}$,
the above estimate also holds uniformly w.r.t.\ $j \in [n]_0$ and all $(t, \x) \in \dom_j$. 
This proves the differentiability in the topology of $\pcont$.

Finally, $ d_{(\bar{y}, \xs)} \bar{g}^{(\bar{y}, \xs)} \in 
\mathcal{L}(\pcont \times \R, \pcont)$ is bounded 
uniformly for all $(\bar{y}, \xs) \in \mathcal{M} \times (-\varepsilon, \varepsilon)$
since $g_x, g_y$ are bounded on
$\D \times \mathcal{Y}$ and $d_{(\y, \xs)} x^{(\bar{y}, \xs),j}$
is bounded uniformly w.r.t.\ $(\bar{y}, \xs) \in \mathcal{M} \times (-\varepsilon, \varepsilon)$
due to \eqref{x_trafo_middle_Fdiff_normbound}.
\end{proof}

\begin{lemma} \label{lem:GRP_matrix_diff}
The map $(\bar{y}, \xs) \in \mathcal{M} \times (-\varepsilon, \varepsilon)
\mapsto \bar{A}^{(\bar{y}, \xs)} \in PC^0(\dom, \R^{n \times n})$
is continuously Fr\'echet differentiable and
$ d_{(\bar{y}, \xs)} \bar{A}^{(\bar{y}, \xs)} \in 
\mathcal{L}(\pcont \times \R, \pcont)$
is uniformly bounded on $\mathcal{M} \times (-\varepsilon, \varepsilon)$.
\end{lemma}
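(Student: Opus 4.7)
The plan is to decompose $\bar{A}^{(\bar{y}, \xs),j}$ from \eqref{A_bar_z} as the product $\big( A(\bar{y}) - x_t^{(\bar{y}, \xs),j} I \big) \big( x_{\x}^{(\bar{y}, \xs),j} \big)^{-1}$ and apply the chain and product rules, verifying that each factor is continuously Fr\'echet differentiable into $PC^0(\dom_j; \R^{n \times n})$ (resp.\ $C^0(\dom_j)$) with derivative uniformly bounded on $\mathcal{M} \times (-\varepsilon, \varepsilon)$. The expected Fr\'echet derivative in direction $(\delta \bar{y}, \delta \xs)$ is then $\big( A'(\bar{y}) \delta \bar{y} - \delta x_t^{(\bar{y}, \xs),j} \, I \big) \big( x_{\x}^{(\bar{y}, \xs),j} \big)^{-1} - \bar{A}^{(\bar{y}, \xs),j} \big( x_{\x}^{(\bar{y}, \xs),j} \big)^{-1} \delta x_{\x}^{(\bar{y}, \xs),j}$, where $\delta x_t$ and $\delta x_{\x}$ denote the directional derivatives supplied by \cref{lema:spacetrafo_properties}.

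First I would verify that the Nemytskii-type operator $\bar{y} \mapsto A(\bar{y})$ from $\mathcal{M} \subset \pcont$ into $PC^0(\dom; \R^{n \times n})$ is continuously Fr\'echet differentiable with derivative $\delta \bar{y} \mapsto A'(\bar{y}) \delta \bar{y}$. This follows from $A = f' \in C^{1,1}$ (because $f \in C^{2,1}$) via a standard Taylor estimate, and the uniform operator-norm bound is immediate because $\bar{y}$ takes values in the compact set $[-c_y,c_y]^n$ on which $A'$ is bounded. Second, \cref{lema:spacetrafo_properties} directly provides the continuous Fr\'echet differentiability of $(\bar{y}, \xs) \mapsto x_t^{(\bar{y}, \xs),j}, \, x_{\x}^{(\bar{y}, \xs),j}$ into $C^0(\dom_j)$, together with uniform bounds on the derivatives of the type \eqref{x_trafo_middle_Fdiff_normbound} (and the analogous bound for the time derivative derived inside that proof). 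Third, the pointwise inversion $\phi \mapsto 1/\phi$ is smooth on the open subset $\{ \phi \in C^0(\dom_j) : \phi \geq 1/4 \}$ of $C^0(\dom_j)$, and \eqref{spacetrafo_xderivative_bound} guarantees that $x_{\x}^{(\bar{y}, \xs),j} \in [1/2, 2]$, so the composition $(\bar{y}, \xs) \mapsto ( x_{\x}^{(\bar{y}, \xs),j} )^{-1}$ is continuously Fr\'echet differentiable with uniformly bounded derivative.

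Combining these three ingredients through the product rule yields the claimed differentiability on each sector $\dom_j$, and taking the maximum over $j \in [n]_0$ lifts this to the $PC^0(\dom)$-topology. The main point requiring care—and the only place where a genuine quantitative argument is needed—is to check that every constant appearing in the product-rule expansion remains bounded \emph{uniformly} in $(\bar{y}, \xs) \in \mathcal{M} \times (-\varepsilon, \varepsilon)$ and in the sector index $j$. This uniformity follows: $A(\bar{y})$ and $A'(\bar{y})$ are bounded because $\|\bar{y}\|_{\pcont} \leq c_y$; the derivatives of the space transformations are bounded uniformly by terms of the form $C(T \|\nabla \lambda\| + 1)$ from the proof of \cref{lema:spacetrafo_properties}; and $x_{\x}^{(\bar{y}, \xs),j} \geq 1/2$ makes both $(x_{\x}^{(\bar{y}, \xs),j})^{-1}$ and its Fr\'echet derivative uniformly bounded. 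No deeper obstacle is anticipated; the work is essentially a careful bookkeeping of the product rule across all sectors.
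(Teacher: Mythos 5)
Your proposal is correct and follows essentially the same route as the paper: the paper's proof is a one-line reference to the argument for $\bar{g}^{(\bar{y},\xs)}$, relying on the definition \eqref{A_bar_z}, the lower bound \eqref{spacetrafo_xderivative_bound} for the inversion of $x_{\x}^{(\bar{y},\xs),j}$, and the uniform boundedness of \eqref{x_trafo_middle_dx_Fdiff} and \eqref{x_trafo_middle_dt_Fdiff} — exactly the three ingredients you combine via the product rule. Your explicit derivative formula and the uniformity discussion are consistent with what the paper leaves implicit.
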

\begin{proof}
The assertion follows analogously to the previous proof by using the definition
\eqref{A_bar_z} of $\bar{A}^{(\bar{y}, \xs)}$, the bound \eqref{spacetrafo_xderivative_bound},
and observing that
\eqref{x_trafo_middle_dx_Fdiff}, \eqref{x_trafo_middle_dt_Fdiff}
are uniformly bounded
for all $(\bar{y}, \xs) \in \mathcal{M} \times (-\varepsilon, \varepsilon)$.
\end{proof}

\begin{theorem} \label{thm:GRP_lipschitz_pcont}	
There is a constant $L_{\Sref} > 0$ such that for all
$u=(u_l, u_r, \xs), \allowbreak 
\hat{u}=(\hat{u}_l, \hat{u}_r, \hat{\xs}) \in \Uad$
with $\Uad$ from \eqref{Uad_physicalcoord} it holds that
\begin{equation} \label{ql_lipschitz}
\| \Sref(\hat{u}) - \Sref(u) \|_{\pcont} \leq L_{\Sref} 
( \|u_l - \hat{u}_l \|_{C^0([-\el,\varepsilon])}
+ \|u_r - \hat{u}_r \|_{C^0([-\varepsilon,\el])}
+ |\xs - \hat{\xs}| ).
\end{equation}
\end{theorem}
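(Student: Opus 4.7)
The plan is to compare $\bar{y} = \Sref(u)$ and $\hat{\bar{y}} = \Sref(\hat{u})$ as broad solutions of the semilinear problems obtained by freezing the quasilinear coefficient matrix at each of the two solutions, then close the estimate through the integral characteristic equations from \cref{def:broadsol_sl}. Because both solutions live over the same reference domain $\dom$ with the same fixed interior boundaries $\Sigma_j$, the comparison takes place in a common geometric setting, which is precisely what the transformation to the reference space was designed to achieve. Abbreviate $\Delta u \coloneqq \|u_l-\hat{u}_l\|_{C^0([-\el,\varepsilon])} + \|u_r-\hat{u}_r\|_{C^0([-\varepsilon,\el])} + |\xs - \hat{\xs}|$.

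Concretely, fix $i \in [n]$, $j \in [n]_0$ and $(t,\x) \in \dom_j$, and subtract the integral characteristic equations \eqref{semilinear_characteristiceq_0n} or \eqref{semilinear_characteristiceq_j} satisfied by the $i$-th component of $\bar{y}^j$ and $\hat{\bar{y}}^j$. The resulting difference splits into five blocks: (a) the initial data term, which by \eqref{trafo_initstate} and \eqref{def_dilation} together with the uniform $C^1$-bound $M_1$ from \eqref{def_Uad} is bounded by $\|u_l-\hat{u}_l\|_{C^0}+M_1|\xs-\hat{\xs}|$ (analogously on the right); (b) the difference of the $i$-characteristic curves $\bar{x_i}(\cdot;t,\x,\bar{y})$ and $\bar{x_i}(\cdot;t,\x,\hat{\bar{y}})$, bounded by $C(\|\bar{y}-\hat{\bar{y}}\|_{\pcont}+|\xs-\hat{\xs}|)$ via \cref{lem:GRP_matrix_diff}, the uniform $PC^1$-bound \eqref{ref_ql_pc1_apriori_bound} and a Gronwall argument on the characteristic ODE; (c) the difference of the intersection times $t^j_i$, bounded analogously using the minimal angle condition \eqref{minimal_angle_refspace} as in \cref{lem:char_stability}(b); (d) the difference $\bar{g}^{(\bar{y},\xs)}-\bar{g}^{(\hat{\bar{y}},\hat{\xs})}$ inside the integral, controlled by \cref{lem:GRP_sourceterm_diff} together with the Lipschitz continuity of $l_i$, $r_i$, which transfers to the auxiliary integrand $\bar{h}_i$ from \eqref{hi_bar_broadsol}; and (e) the difference of the boundary values $G_i^j$ from \eqref{G_ij_coupling}, which by \cref{lem:GRP_bdarycondition_small} is Lipschitz with a Lipschitz constant $L_{G_1}$ in the transported component and a \emph{small} Lipschitz constant $L_{G_2}$ in the full state arguments.

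Combining these five blocks yields a pointwise estimate of the type
\begin{equation*}
|\bar{y}^j_i-\hat{\bar{y}}^j_i|(t,\x)
\leq C\,\Delta u
+ C\!\int_0^t \|\bar{y}-\hat{\bar{y}}\|_{PC^0(\dom^s)}\intd s
+ L_{G_1}\,|\bar{y}^{j-1}_i-\hat{\bar{y}}^{j-1}_i|(t^j_i,s_j t^j_i)
+ 2L_{G_2}\,\|\bar{y}-\hat{\bar{y}}\|_{PC^0(\dom^t)},
\end{equation*}
with the obvious modifications for $j\in\{0,n\}$ and for $i<j$. Iterating across sectors through the $L_{G_1}$-term exactly as in the proof of \cref{thm:existence_broadsol_sl}, transferring to the rescaled norm $\|\cdot\|_*$ of \eqref{rescalednorm1}, and absorbing the $L_{G_2}$-term by the smallness property from \cref{lem:GRP_bdarycondition_small} (if necessary reducing $c_y$, i.e. $T$ and $M_0,M_1,\varepsilon$), one arrives at $\|\bar{y}-\hat{\bar{y}}\|_* \leq C\,\Delta u + \tfrac34\|\bar{y}-\hat{\bar{y}}\|_*$. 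Rearranging and translating back to the $\pcont$-norm yields the desired Lipschitz bound with $L_{\Sref}=4C$.

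The principal obstacle is the quasilinear coupling: the characteristic curves, the coefficient matrix, and the source term all depend on the solution itself through Lemmas~\ref{lem:GRP_matrix_diff}--\ref{lem:GRP_sourceterm_diff}, so $\|\bar{y}-\hat{\bar{y}}\|_{\pcont}$ reappears on the right-hand side at several places, further amplified by the iteration across sectors. The exponentially rescaled norm $\|\cdot\|_*$, tailored in \cref{thm:existence_broadsol_sl} for exactly this type of sector-by-sector contraction, together with the smallness of $L_{G_2}$ guaranteed by \cref{lem:GRP_bdarycondition_small}, is what makes the final absorption possible; once this is set up, Gronwall's inequality closes the estimate in a completely standard way.
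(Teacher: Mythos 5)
Your overall strategy (work in the reference space, iterate across sectors via the transported boundary component, exploit the smallness of the Lipschitz constant of $G_i^j$ in the full state arguments, close with Gronwall) matches the paper's, and your blocks (a) and (e) are handled correctly. However, the specific decomposition you choose --- freezing the coefficient matrix at \emph{each} of the two solutions and subtracting the two characteristic integral identities, which forces you to compare two \emph{different} families of characteristics and intersection times in blocks (b) and (c) --- has a genuine gap that cannot be closed with the available a-priori information. To bound
$\int_{t_i}^{t}\bar{h}_i(\y)(s,\bar{x_i}(s))\ds-\int_{\hat t_i}^{t}\bar{h}_i(\hat{\y})(s,\hat{\bar{x}}_i(s))\ds$
you must, after adding and subtracting, control the spatial increment of $\bar{h}_i(\y)(s,\cdot)$ between the two curves. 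But $\bar{h}_i$ from \eqref{hi_bar_broadsol} contains the correction term
$\big(\partial_t l_i(\y)+\bar{\lambda}_i\partial_{\x}l_i(\y)\big)^\top\y=\big(\nabla l_i(\y)(\y_t+\bar{\lambda}_i\y_{\x})\big)^\top\y$,
whose Lipschitz constant in $\x$ involves second derivatives $\y_{t\x},\y_{\x\x}$; the solution is only piecewise $C^1$ and \eqref{ref_ql_pc1_apriori_bound} gives no such control, so you only obtain a modulus of continuity, i.e.\ an $o(1)$ rather than an $\mathcal{O}(\Delta u)$ bound. Worse, the same correction term also changes through the frozen coefficient itself: the remaining difference $\big(\nabla l_i(\y)\y_t\big)^\top\y-\big(\nabla l_i(\hat{\y})\hat{\y}_t\big)^\top\hat{\y}$ requires a bound on $\|\y_t-\hat{\y}_t\|_{\pcont}$, i.e.\ a $PC^1$-difference estimate, which is circular here (the $PC^1$-continuity of $\Sref$ is only established afterwards in \cref{thm:GRP_cont_pdiff}, and even then only as continuity, not Lipschitz continuity) and is in any case not dominated by $\|\y-\hat{\y}\|_{\pcont}$.

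The paper's proof is arranged precisely to avoid both problems: it shows that $\bar{v}=\hat{\y}-\y$ is itself the broad solution of a \emph{single} semilinear system with the one frozen matrix $\bar{A}^{(\y,\xs)}$, initial data $\mathbf{D}_l(\hat{u}_l,\hat{\xs})-\mathbf{D}_l(u_l,\xs)$ (bounded by $\|u_l-\hat{u}_l\|+M_1|\xs-\hat{\xs}|$ as you note), a functional source term
$\bar{h}(\bar{v})$ built from mean-value (Bochner integral) representations of $d_{(\y,\xs)}\bar{g}$ and $d_{(\y,\xs)}\bar{A}$ applied to $(\bar{v},\hat{\xs}-\xs)$ and multiplied by the uniformly bounded $\hat{\y}_{\x}$, and interior boundary operators $H_i^j$ obtained by the same device from $G_i^j$, satisfying $H_i^j(\cdot,0,0,0)=0$ with small coupling constant by \cref{lem:GRP_bdarycondition_small}. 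With one coefficient matrix there is only one family of characteristics, the $l_i(\y)$-correction terms always multiply $\bar{v}$ and are bounded using only $\|\y\|_{\pdiff}$, and the a-priori estimate \cref{thm:sl_pc0_bound} (which already contains the sector iteration and Gronwall argument you redo by hand) yields \eqref{ql_lipschitz} in one step. To repair your argument, replace the two frozen semilinear problems by this single difference equation; the rest of your outline then goes through.
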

\begin{proof}
Denote $\y = \Sref(u), \ \tilde{y} = \Sref(\hat{u})$. 
Then $\bar{v} \coloneqq \tilde{y} - \y \in \pdiff$ solves
\begin{equation} \label{ql_difference}
\bar{v}_t + \bar{A}^{(\bar{y}, \xs)} \bar{y}_x = \bar{h}(\bar{v}) 
\quad \text{on } \dom, \quad \bar{v}^0(0, \cdot) = \bar{v}_l, 
\quad \bar{y}^n(0, \cdot) =  \bar{v}_r,
\end{equation}
with $\bar{v}_l = \mathbf{D}_l(\hat{u}_l, \hat{\xs}) - \mathbf{D}_l(u_l, \xs)$,
$\bar{v}_r = \mathbf{D}_r(\hat{u}_r, \hat{\xs}) - \mathbf{D}_r(u_r, \xs)$. 
By \eqref{def_Uad} and \eqref{trafo_initstate},
\begin{equation} \label{init_state_refspace_c0lipschitz}
|\bar{v}_l(\x)| \leq \| u_l - \hat{u}_l \|_{C^0([-\el,\varepsilon])} 
+ M_1|\xs - \hat{\xs}|,
\end{equation}
and analogously for $\bar{v}_r$.
The source term in functional form is given by 
\begin{equation*}
\begin{aligned}
\bar{h}(\bar{v})  & =  \int_0^1 d_{(\bar{y}, \xs)} 
\bar{g}^{\left( s (\bar{y}, \xs) + (1-s) (\tilde{y}, \tilde{x}_0) \right)} 
\ds \cdot (\bar{v}, \tilde{x}_0 - \xs) \\
& - \Big( \int_0^1 d_{(\bar{y}, \xs)} 
\bar{A}^{\left( s (\bar{y}, \xs) + (1-s) (\tilde{y}, \tilde{x}_0) \right)} 
\ds \cdot (\bar{v}, \tilde{x}_0 - \xs) \Big) \tilde{y}_{\x} ,
\end{aligned}
\end{equation*}
with the Fr\'echet derivatives $d_{(\y, \xs)} \bar{g}^{(\bar{y}, \xs)}$
from \cref{lem:GRP_sourceterm_diff} and 
$d_{(\y, \xs)} \bar{A}^{(\bar{y}, \xs)}$ from \cref{lem:GRP_matrix_diff}.
Note that the two Bochner integrals above are in fact well-defined due to the continuous
differentiability of $\bar{g}^{(\bar{y}, \xs)}$ and $\bar{A}^{(\bar{y}, \xs)}$ w.r.t.\ $\y$ and $\xs$.
To bound the Lipschitz constant of $\bar{h}(\bar{v})$
w.r.t.\ $\bar{v} \in \pcont$, we exploit that
$\y, \tilde{y} \in \mathcal{M}$ with the bounded set $\mathcal{M}$ from \eqref{def_set_calM}.
By \cref{lem:GRP_sourceterm_diff}, we have 
$ d_{(\bar{y}, \xs)} \bar{g}^{(\bar{y}, \xs)}
\in \mathcal{L}(\pcont \times \R, \pcont)$
is uniformly bounded for all $(\bar{y}, \xs) \in \mathcal{M} \times (-\varepsilon, \varepsilon)$.
With the same argument and using \cref{lem:GRP_sourceterm_diff},
we can also find an a-priori bound for $d_{(\y, \xs)} \bar{A}^{(\bar{y}, \xs)}$.
Together, this shows that $\bar{h}(\bar{v})$ is Lipschitz continuous
w.r.t.\ $\bar{v} \in \pcont$ in the sense of \eqref{lipschitz_sl_sourceterm},
where the Lipschitz constant can be a-priori bounded.

The interior boundary constraints \eqref{G_ij_coupling}
imply on $\Sigma_j$ for all $j \in [n]$ and $i > j$
\begin{equation} \label{Hij_explicit}
\begin{aligned}
\bar{v}^j_i & = l_i(\tilde{y}^{j}) ^\top \tilde{y}^j 
- l_i(\bar{y}^{j}) ^\top \bar{y}^j + ( l_i(\bar{y}^{j}) - l_i(\tilde{y}^{j})) ^\top \tilde{y}^j \\
& = G_i^j (t, \tilde{y}^{j-1}_i, \tilde{y}^{j-1}, \tilde{y}^{j} ) 
- G_i^j (t, \bar{y}^{j-1}_i, \bar{y}^{j-1}, \bar{y}^{j} )
+ ( l_i(\bar{y}^{j}) - l_i(\tilde{y}^{j})) ^\top \tilde{y}^j \\
& = \int_0^1 (\partial_2 + \partial_3 + \partial_4) G_i^j \ds 
\cdot ( \bar{v}^{j-1}_i, \bar{v}^{j-1}, \bar{v}^{j} )
- (\tilde{y}^j) ^\top \int_0^1 \nabla l_i \big( s \tilde{y}^{j} + (1-s) \bar{y}^{j} \big) \ds 
\cdot \bar{v}^{j} \\
& \eqqcolon H_i^j(t, \bar{v}^{j-1}_i, \bar{v}^{j-1}, \bar{v}^{j}),
\end{aligned}
\end{equation}
where $G_i^j = G_i^j \big(t, s \tilde{y}^{j-1}_i + (1-s) \bar{y}^{j-1}_i, 
s \tilde{y}^{j-1} + (1-s) \bar{y}^{j-1}, 
s \tilde{y}^{j} + (1-s) \bar{y}^{j} \big)$
and $(\partial_2 + \partial_3 + \partial_4) G_i^j$ denotes
the Jacobian matrix of $G_i^j$ w.r.t.\ the second, third, and fourth argument. 
The computation \eqref{Hij_explicit} is analogous for $i < j$.
Since $|\y|, \, |\tilde{y}| \leq c_y$ holds pointwise everywhere
due to \eqref{def_set_calM},
\cref{lem:GRP_bdarycondition_small} implies
that there is an priori bound for the Lipschitz constant of $H_i^j$ w.r.t.\ 
$(\bar{v}^{j-1}_i, \bar{v}^{j-1}, \bar{v}^{j})$.
In particular, \eqref{Hij_explicit} shows that the Lipschitz constant of
$H_i^j$ w.r.t.\ $(\bar{v}^{j-1}, \bar{v}^{j})$ is sufficiently small in the sense
of \cref{thm:existence_broadsol_sl}.

We now apply \cref{thm:sl_pc0_bound} on \eqref{ql_difference}
with $H_i^j(\cdot, 0,0,0) = 0$ and $\bar{h}(\bar{0}) = 0$
and \eqref{init_state_refspace_c0lipschitz}.
This yields \eqref{ql_lipschitz}.
\end{proof}

\begin{theorem} \label{thm:GRP_cont_pdiff}	
The solution operator $\Sref : \Uad \longrightarrow \pdiff$
with $\Uad$ from \eqref{Uad_physicalcoord} is continuous.
\end{theorem}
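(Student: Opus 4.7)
The plan is to apply the continuous dependence theorem for semilinear systems, \cref{thm:sl_continuous_dep_data}, to the system satisfied by the space derivative $\bar{v}^\nu := \bar{y}^\nu_{\bar{x}}$ of the reference-space GRP solutions $\bar{y}^\nu = \Sref(u^\nu)$ for a sequence $u^\nu \to u$ in $\Uad$. By \cref{thm:GRP_lipschitz_pcont} we already have $\bar{y}^\nu \to \bar{y} := \Sref(u)$ in $\pcont$, and \eqref{ref_ql_pc1_apriori_bound} gives uniform boundedness in $\pdiff$. It suffices to prove $\bar{v}^\nu \to \bar{v} := \bar{y}_{\bar{x}}$ in $\pcont$: the convergence $\bar{y}^\nu_t \to \bar{y}_t$ then follows by rearranging the quasilinear equation as $\bar{y}^\nu_t = \bar{g}^{(\bar{y}^\nu, \xs^\nu)} - \bar{A}^{(\bar{y}^\nu, \xs^\nu)} \bar{v}^\nu$ and invoking \cref{lem:GRP_sourceterm_diff} and \cref{lem:GRP_matrix_diff}.

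By \cref{thm:c1_sol_sl} (applied with $\bar{z} = \bar{y}^\nu$), each $\bar{v}^\nu$ is a broad solution of the semilinear system \eqref{pde_sl_yx} with matrix $\bar{A}^{(\bar{y}^\nu, \xs^\nu)}$, initial data $(\mathbf{D}_l(u_l^\nu, \xs^\nu))'$ and $(\mathbf{D}_r(u_r^\nu, \xs^\nu))'$, source term obtained from differentiating \eqref{problem_ref_ql} in $\bar{x}$, and interior boundary conditions $K_i^{j, \nu}$ of the form \eqref{sl_yx_nodecond}. The key difficulty is that, as originally written, both the source term (through $\partial_{\bar{x}} \bar{A}^{(\bar{y}^\nu, \xs^\nu)}$) and each $K_i^{j, \nu}$ (through expressions $\bar{z}^k_t + s_j \bar{z}^k_{\bar{x}}$) depend on derivatives of $\bar{y}^\nu$, whereas only $PC^0$-convergence of $\bar{y}^\nu$ is available. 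I would eliminate these derivative dependencies by substituting the identity $\bar{y}^{k,\nu}_t + s_j \bar{y}^{k,\nu}_{\bar{x}} = (s_j I - \bar{A}^{(\bar{y}^\nu, \xs^\nu), k}) \bar{y}^{k,\nu}_{\bar{x}} + \bar{g}^{(\bar{y}^\nu, \xs^\nu), k}$ along each shock trace $\Sigma_j$, $k \in \{j-1, j\}$ (obtained from the quasilinear equation), and by using $\bar{y}^\nu_{\bar{x}} = \bar{v}^\nu$ throughout. After this manipulation, the resulting source $\bar{h}^\nu(\bar{v})$ and boundary functions $K_i^{j, \nu}(t, v, w_1, w_2)$ have coefficients depending only on $\bar{y}^\nu$ and $\xs^\nu$; these converge by \cref{lem:GRP_sourceterm_diff}, \cref{lem:GRP_matrix_diff}, and \cref{lema:spacetrafo_properties}, together with the uniform convergence $\bar{y}^\nu(\cdot, s_j \cdot) \to \bar{y}(\cdot, s_j \cdot)$ on each $\Sigma_j$, giving the locally uniform convergence $K_i^{j, \nu} \to K_i^j$ required by \cref{thm:sl_continuous_dep_data}.

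The main obstacle is that substituting $\bar{y}^\nu_{\bar{x}} = \bar{v}^\nu$ in $\partial_{\bar{x}} \bar{A}^{(\bar{y}^\nu, \xs^\nu)} \bar{v}^\nu$ renders the effective source quadratic in $\bar{v}^\nu$, hence only locally Lipschitz. I would circumvent this by exploiting the uniform $\pdiff$-bound \eqref{ref_ql_pc1_apriori_bound}: there is a common ball in $\pcont$ containing all $\bar{v}^\nu$ and $\bar{v}$, and truncating the source outside this ball yields a globally Lipschitz modification with $\nu$-uniform constants that does not alter any of the broad solutions, so \eqref{lipschitz_sl_sourceterm} and \eqref{lipschitz_Fij} are met uniformly in $\nu$. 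The uniform validity of \eqref{minimal_angle_refspace} and \eqref{entropy_cond_Abar} along the sequence follows from the construction of $\Uad$, and the initial data converge in $C^0$ since $u^\nu \to u$ in $C^1$, $\xs^\nu \to \xs$, and $\mathbf{D}_l, \mathbf{D}_r$ from \eqref{def_dilation} are continuous in these topologies. Applying \cref{thm:sl_continuous_dep_data} then yields $\bar{v}^\nu \to \bar{v}$ in $\pcont$, and therefore $\bar{y}^\nu \to \bar{y}$ in $\pdiff$.
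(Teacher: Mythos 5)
Your proposal is correct and follows essentially the same route as the paper: identify $\bar{y}^{\nu}_{\bar{x}}$ via \cref{thm:c1_sol_sl} as the unique broad solution of a semilinear system whose data depend on $\bar{y}^{\nu}$ only through $PC^0$-quantities (after substituting the quasilinear equation to eliminate derivative dependencies, exactly as the paper does via \eqref{sl_yx_nodecond} and \eqref{sourceterm_yx_seconddiff_xij}), and then invoke \cref{thm:sl_continuous_dep_data} together with \cref{thm:GRP_lipschitz_pcont}. You are in fact more explicit than the paper on the point that the effective source is quadratic in $\bar{v}$ and hence only locally Lipschitz; your truncation argument based on the uniform bound \eqref{ref_ql_pc1_apriori_bound} is a clean way to justify the paper's tacit claim that the hypotheses of \cref{thm:existence_broadsol_sl} and \cref{thm:sl_continuous_dep_data} are met uniformly in $\nu$.
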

\begin{proof}
Let $u = (u_l, u_r, \xs) \in \Uad$ and $\y = \Sref(u)$.
Further, let $(u^\nu)_\nu \subset \Uad$ with $u^\nu = (u_l^\nu, u_r^\nu, \xs^\nu)$
be a sequence with $u^\nu \longrightarrow u$ in $\Uad$
for $\nu \rightarrow \infty$. Denote $\y^{[\nu]} = \Sref(u^\nu)$.
Using \cref{thm:c1_sol_sl} with $\bar{y} = \bar{z}$
implies that $\bar{y}_{\x}$ is a broad solution of
\begin{equation} \label{yx_pde_a1}
(\bar{y}_{\x})_{t} + \bar{A}^{(\bar{y}, \xs)} (\bar{y}_{\x})_{\bar{x}}
= \partial_{\x} \bar{g}^{(\bar{y}, \xs)} - (\partial_{\x} \bar{A}^{(\bar{y}, \xs)}) \bar{y}_{\x} 
\end{equation}
with interior boundary conditions \eqref{sl_yx_nodecond} with $\bar{z} = \y$.
Using \eqref{A_bar_z} and \eqref{h_bar_z} yields
\begin{equation} \label{yx_sourcterm_derivatives}
\begin{aligned}
\partial_{\x} \bar{g}^{(\bar{y}, \xs)} = g_x x^{(\bar{y}, \xs)}_{\x} + g_y \bar{y}_{\x}, \quad
\partial_{\x} \bar{A}^{(\bar{y}, \xs)} = \big( A'(\y) \y_x - x_{t \x}^{(\y)} \big) 
\big( x_{\x}^{(\y)} \big)^{-1},
\end{aligned}
\end{equation}
since $x_{\x}^{(\y, \xs)}$ is independent of $\x$, see \eqref{x_trafo_middle_dx}. 
This shows for the second derivative
\begin{equation} \label{spacetrafo_mixed_second_D}
\bar{x}_{tx}^{(\bar{y}, \xs)} (t, \x) = 
\big( \xi^{(\bar{y}, \xs)}_{j+1} - \xi^{(\bar{y}, \xs)}_{j} 
- t ( \dot{\xi}^{(\bar{y}, \xs)}_{j+1} - \dot{\xi}^{(\bar{y}, \xs)}_{j} ) \big)
t^{-2} (s_{j+1}-s_j)^{-1} 
\end{equation}
for all $j \in [n-1]$ and $(t, \x) \in \dom_j$. 
Using $\xi^{(\bar{y}, \xs)}_{j} (t) - t \dot{\xi}^{(\bar{y}, \xs)}_{j}(t) 
= - \int_0^t \ddot{\xi}^{(\bar{y}, \xs)}_{j}(\tau) \tau \intd \tau$ 
and \eqref{xij_curve_shock} yields 
\begin{equation} \label{sourceterm_yx_seconddiff_xij}
\xi_{j} (t) - t \dot{\xi}_{j}(t)
= - \int_0^t \nabla \lambda_j(\y^{j-1}, \y^j)  
\big( (s_j - \bar{A}^{(\bar{y}^k, \xs)}) \y^k_{\x} + \bar{g}^{(\bar{y}^k, \xs)} \big)_{k=j-1,j}
(\tau, s_j \tau) \tau \intd \tau
\end{equation}
which depends on the values of $\y^{j-1}_{\x}$ and $\y^{j}_{\x}$ on $\Sigma_j$ on $[0,t]$.
Combining this with \eqref{yx_sourcterm_derivatives} implies
that the source term in \eqref{yx_pde_a1} can be written as
\begin{equation*}
\partial_{\x} \bar{g}^{(\bar{y}, \xs)} 
- (\partial_{\x} \bar{A}^{(\bar{y}, \xs)}) \bar{y}_{\x} =
\bar{d}( \bar{y}_{\x} )
\end{equation*}
for suitable $\bar{d} : \pcont \rightarrow \pcont$.
Note that $\bar{d}$ is of the regularity assumed in \cref{thm:existence_broadsol_sl}.
Moreover, noting \cref{rem:nodecond_yx_slassumptions}, the interior boundary conditions of $\y_{\x}$
satisfy the assumptions of \cref{thm:existence_broadsol_sl}. 
Thus, $\y_{\x}$ is the unique broad solution of
\begin{equation} \label{yx_pde_a2}
\bar{v}_{t} + \bar{A}^{(\bar{y}, \xs)} \bar{v}_{\bar{x}}
= \bar{d}(\bar{v}) \quad \text{on } \dom, 
\quad \bar{v}^0(0, \cdot) = \mathbf{D}_l(u_l, \xs)', 
\ \bar{v}^n(0, \cdot) = \mathbf{D}_r(u_r, \xs)'.
\end{equation}
By replacing $\y$ in \eqref{yx_pde_a2} by $\y^{[\nu]}$,
then $\y^{[\nu]}_{\x}$ is the unique broad solution of 
\begin{equation} \label{yx_pde_a3}
\bar{v}_{t} + \bar{A}^{(\y^{[\nu]}, \xs^\nu)} \bar{v}_{\bar{x}}
= \bar{d}^{\nu}(\bar{v}) \text{ on } \dom, 
\quad \bar{v}^0(0, \cdot) = \mathbf{D}_l(u_l^\nu, \xs^\nu)', 
\ \bar{v}^n(0, \cdot) = \mathbf{D}_r(u_r^\nu, \xs^\nu)',
\end{equation}
and interior boundary condition \eqref{sl_yx_nodecond}
with $\y$ and $\bar{z}$ replaced by $\y^{[\nu]}$.

Now observe that $u^\nu \longrightarrow u$ in $\Uad$ 
implies with the definition \eqref{trafo_initstate} 
of $\mathbf{D}_l, \, \mathbf{D}_r$
that $\mathbf{D}_l(u_l, \xs)' \rightarrow \mathbf{D}_l(u_l, \xs)'$ and 
$\mathbf{D}_r(u_r^\nu, \xs^\nu)' \rightarrow \mathbf{D}_r(u_r, \xs)'$ uniformly on
$[-\el, 0]$ and $[0, \el]$ for $\nu \rightarrow \infty$
since $u_l'$ and $u_r'$ are even uniformly continuous.

\Cref{thm:GRP_lipschitz_pcont} yields $\y^{[\nu]} \longrightarrow \y$ in $\pcont$
and thus $\bar{d}^{\nu}(\bar{w}) \longrightarrow \bar{d}(\bar{w})$ in $\pcont$ for
all $\bar{w} \in \pcont$ for $\nu \rightarrow \infty$.
This follows from the Lipschitz continuous dependence of first-order derivatives
of the space transformations on $\y$, see \cref{lema:spacetrafo_properties}
and \eqref{sourceterm_yx_seconddiff_xij}. 
It can be easily checked that the interior boundary
conditions of \eqref{yx_pde_a2} and \eqref{yx_pde_a3}
also satisfy the assumptions of \cref{thm:sl_continuous_dep_data}.
Then applying \cref{thm:sl_continuous_dep_data} yields 
$\y^{[\nu]}_{\x} \longrightarrow \y_{\x}$ in $\pcont$ for $\nu \rightarrow \infty$.
By rearranging the PDE, this also holds for the time derivatives
$\y^{[\nu]}_{t} \longrightarrow \y_{t}$ in $\pcont$ for $\nu \rightarrow \infty$.
\end{proof}

\section{Differentiability Properties} \label{sec:diff}
In this section, we derive differentiability properties of the solution operator
of the GRP both in the reference space and in physical coordinates
w.r.t.\ the initial state \eqref{initial_data_GRP}
with $u = (u_l, u_r, \xs) \in \Uad$ and $\Uad$ from \eqref{Uad_physicalcoord}.
This will also imply the differentiability
of the objective functional \eqref{objective_fcntal}.

\subsection{Differentiability of the Solution Operator in the Reference Space}
\label{sec:diffref}
We prove the differentiability of the solution of the GRP in the reference space
w.r.t.\ the piecewise $C^1$ initial states and the position of the discontinuity.
Let
\begin{equation}
\U = C^1([-\el,\varepsilon]; \R^n) \times C^1([-\varepsilon,\el]; \R^n) \times \R
\end{equation}
be equipped with the induced norm denoted by $\| \cdot \|_{\U}$.

\begin{lemma} \label{lem:deltay_properties}
Let $u \in \Uad$ and $\y = \Sref(u)$. For all $\delta u = (\delta u_l, \delta u_r, \delta \xs) \in \U$,
there exists a unique broad solution $\delta \y \in \pcont$ of
\begin{align} 
\delta \y_t + \bar{A}^{(\bar{y}, \xs)} \delta \y_{\x}
& = \bar{a}(\delta \y) \coloneqq d_{(\bar{y}, \xs)} \bar{g}^{(\bar{y},\xs)} \cdot (\delta \y , \delta \xs)
- (d_{(\bar{y}, \xs)} \bar{A}^{(\bar{y}, \xs)} \cdot (\delta \y , \delta \xs) ) \y_{\x} , \notag \\
\delta \y^0(0, \cdot) &= \delta \bar{u}_l , \quad \delta \bar{u}_l(\x) 
= \mathbf{D}_l(\delta u_l, \xs)(\x)+\mathbf{D}_l(u_l', \xs)(\x) 
\frac{\x+\el}{\el} \delta \xs \label{pde_dy} , \\
\quad \delta \y^n(0, \cdot) &= \delta \bar{u}_r , \quad \delta \bar{u}_r(\x) 
= \mathbf{D}_r(\delta u_r, \xs)(\x)+\mathbf{D}_r(u_r', \xs)(\x) \frac{\x-\el}{\el} \delta \xs \notag ,
\end{align}
interior boundary conditions for all $j \in [n]$, $i \neq j$ and for all $t \in [0,T]$
defined by
\begin{equation} \label{B_ij_coupling}
	\begin{aligned}
		\delta \bar{y}^{j}_i (t, s_j t) &= B_i^j \big( t, \delta \bar{y}^{j-1}_i (t, s_j t),
		\delta \bar{y}^{j-1} (t, s_j t), \delta \bar{y}^{j} (t, s_j t)  \big) && i>j, \\
		\delta \bar{y}^{j-1}_i (t, s_j t) &= B_i^j \big( t, \delta \bar{y}^{j}_i (t, s_j t),
		\delta \bar{y}^{j-1} (t, s_j t), \delta \bar{y}^{j} (t, s_j t) \big) && i<j,
	\end{aligned}
\end{equation}
with $\delta \bar{y}^{j}_i = l_i(\y) ^\top \delta \y^j$ and
$\delta \bar{y}^{j-1}_i = l_i(\y) ^\top \delta \y^{j-1}$,
and $B_i^j$ given with $G_i^j$ from \eqref{intbdary_cond_GRP} by 
\begin{equation} \label{sl_dy_nodecond}
\begin{aligned}
B_i^j(t, v, v^1, v^2) & = \partial_{2} G_i^j(t, \y_i^{j-1}, \y^{j-1}, \y^{j}) v
+ \partial_{3} G_i^j(t, \y_i^{j-1}, \y^{j-1}, \y^{j})^\top v^1 \\
& + \partial_{4} G_i^j(t, \y_i^{j-1}, \y^{j-1}, \y^{j})^\top v^2 
- (\nabla l_i(\y^j) v^2) ^\top \y^j
\end{aligned}
\end{equation}
for $i>j$. The case $i<j$ is analogous.
Moreover, \eqref{pde_dy} and \eqref{sl_dy_nodecond} define a bounded linear operator 
$ (\delta u \mapsto \delta \y) \in \mathcal{L}(\U, \pcont)$
which is uniformly bounded for all $u \in \Uad$, i.e.,
there is $C > 0$ such that $\| \delta \y \|_{\pcont} \leq C \| \delta u \|_{\U}$
holds independently of $u \in \Uad$.
\end{lemma}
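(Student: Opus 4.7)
The strategy is to recognize \eqref{pde_dy} with interior boundary conditions \eqref{B_ij_coupling} as a semilinear system of the form studied in \cref{sec:slproblems}, applied with $\bar{z} = \bar{y} = \Sref(u)$ and $\xs$ treated as the fixed nominal parameter, $\bar{h}(\cdot) = \bar{a}(\cdot)$, and $F_i^j = B_i^j$. Because $\bar{a}$ is affine in $\delta \y$ (with $\delta \xs$ entering only as a fixed perturbation) and the $B_i^j$ are linear in $(v, v^1, v^2)$, once existence, uniqueness, and the uniform $PC^0$-bound are obtained, the linearity of $\delta u \mapsto \delta \y$ in $\mathcal{L}(\U, \pcont)$ follows at once by superposition and the uniqueness assertion.

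\textbf{Verifying the hypotheses of \cref{thm:existence_broadsol_sl}.} By \cref{lem:GRP_sourceterm_diff,lem:GRP_matrix_diff}, the Fr\'echet derivatives $d_{(\y,\xs)} \bar{g}^{(\y,\xs)}$ and $d_{(\y,\xs)} \bar{A}^{(\y,\xs)}$ are uniformly bounded as operators on $\mathcal{M} \times (-\varepsilon, \varepsilon)$; combined with the a-priori $PC^1$-bound \eqref{ref_ql_pc1_apriori_bound} of $\y_{\x}$, this shows that $\bar{a}$ satisfies the Lipschitz estimate \eqref{lipschitz_sl_sourceterm} with a constant $L_{\bar{a}}$ independent of $u \in \Uad$. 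The interior boundary maps $B_i^j$ are globally linear in $(v, v^1, v^2)$. Reading $\partial_2 G_i^j \equiv 1$ off \eqref{intbdary_cond_GRP}, we see that the Lipschitz constant of $B_i^j$ w.r.t.\ $v$ is equal to $1$, so \eqref{lipschitz_Fij} holds with $L_{F_1} = 1 + \epsilon$ for any fixed small $\epsilon > 0$. The Lipschitz constant $L_{F_2}$ of $B_i^j$ w.r.t.\ $(v^1, v^2)$ is controlled by $\partial_3 G_i^j$, $\partial_4 G_i^j$, and the additional term $(\nabla l_i(\y^j) v^2)^\top \y^j$ in \eqref{sl_dy_nodecond}. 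The first two contributions are small by \cref{lem:GRP_bdarycondition_small}; the last is bounded by $\|\nabla l\| \, c_y$, which is small because of the smallness of $c_y$ in \eqref{def_set_calM}. Thus $L_{F_2}$ can be made as small as required, in particular satisfying $L_{F_2} \leq \tfrac14 (\sum_{k=0}^n L_{F_1}^k)^{-1}$ uniformly for $u \in \Uad$.

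\textbf{Main obstacle.} The one nontrivial point is to secure these smallness estimates uniformly over $u \in \Uad$: both the Lipschitz bounds of $\bar{a}$ and those of $B_i^j$ must be controlled independently of the base point $\y = \Sref(u)$. This is precisely where the careful shrinking of $M_0, M_1, \varepsilon, T$ from \cref{sec:riemannproblem,sec:qlproblems} (giving the a-priori inclusion $\y \in \mathcal{M}$ with small $c_y$, together with the uniform $PC^1$-bound \eqref{ref_ql_pc1_apriori_bound}) enters.

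\textbf{Existence, uniqueness, and the uniform bound.} With the verified hypotheses and initial data $\delta \bar{u}_l, \delta \bar{u}_r \in \mathcal{W}$ (these are $C^1$, hence continuous, since $\mathbf{D}_l(u_l', \xs), \mathbf{D}_r(u_r', \xs)$ are bounded by the definition \eqref{trafo_initstate}), \cref{thm:existence_broadsol_sl} yields a unique broad solution $\delta \y \in \pcont$. For the quantitative bound I would apply \cref{thm:sl_pc0_bound}. Since $B_i^j(t, 0, 0, 0) = 0$ and $\bar{a}(\bar{0}) = d_{(\y, \xs)} \bar{g}^{(\y,\xs)} \cdot (0, \delta \xs) - \big(d_{(\y, \xs)} \bar{A}^{(\y,\xs)} \cdot (0, \delta \xs)\big) \y_{\x}$ satisfies $\|\bar{a}(\bar{0})\|_{\pcont} \leq C |\delta \xs|$ by the uniform operator-norm bounds of \cref{lem:GRP_sourceterm_diff,lem:GRP_matrix_diff} and \eqref{ref_ql_pc1_apriori_bound}, while the initial data obey $\|\delta \bar{u}_l\|_{C^0}, \|\delta \bar{u}_r\|_{C^0} \leq \|\delta u_l\|_{C^0} + \|\delta u_r\|_{C^0} + C |\delta \xs|$ (in the spirit of \eqref{init_state_refspace_c0lipschitz}), we obtain $\|\delta \y\|_{\pcont} \leq C \|\delta u\|_{\U}$ with a constant $C$ independent of $u \in \Uad$. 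The linearity of $\delta u \mapsto \delta \y$ is then immediate: both source $\bar{a}$, interior boundary conditions $B_i^j$, and initial data depend linearly on $(\delta u_l, \delta u_r, \delta \xs)$, so superposition and uniqueness yield the asserted operator in $\mathcal{L}(\U, \pcont)$ with the stated uniform bound.
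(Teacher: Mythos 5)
Your proposal is correct and follows essentially the same route as the paper: recognize \eqref{pde_dy}, \eqref{B_ij_coupling}, \eqref{sl_dy_nodecond} as an instance of the semilinear framework of \cref{thm:existence_broadsol_sl} with $\bar z=\y$, obtain uniqueness and the $PC^0$-bound from \cref{thm:sl_pc0_bound}, deduce linearity from superposition, and get uniformity in $u$ from the uniform operator bounds in \cref{lem:GRP_sourceterm_diff}, \cref{lem:GRP_matrix_diff} together with \eqref{ref_ql_pc1_apriori_bound} and \eqref{def_set_calM}. You are in fact somewhat more explicit than the paper in verifying the Lipschitz hypotheses ($L_{F_1}$ from $\partial_2 G_i^j\equiv 1$, smallness of $L_{F_2}$, and the $\mathcal{O}(|\delta\xs|)$ bound on $\bar a(\bar 0)$), which the paper leaves implicit.
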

\begin{proof}
	First note that \eqref{pde_dy}, \eqref{sl_dy_nodecond} satisfy
	the assumptions of \cref{thm:existence_broadsol_sl},
	implying the unique existence of a solution $\delta \y \in \pcont$.
	Further, $\delta \y$ is linear w.r.t.\ $\delta u$ since
	the initial states $\delta \bar{u}_l, \delta \bar{u}_r$
	and the source term $\bar{a}$ in \eqref{pde_dy} are linear w.r.t.\ $\delta u$
	and \eqref{sl_dy_nodecond} is linear w.r.t.\ $(v,v^1,v^2)$.
	Since $\| \delta \bar{u}_l \|_{C^0([-\el, 0]; \R^n)} \leq \| 
	\delta u_l \| _{C^0([-\el, \varepsilon]; \R^n)} + M_1 | \delta \xs |$ and 
	analogously for $\delta \bar{u}_r$,
	\cref{thm:sl_pc0_bound} implies $\| \delta \y \|_{\pcont} \leq C \| \delta u \|_{\U}$
	and thus \eqref{pde_dy}, \eqref{sl_dy_nodecond} define 
	a bounded linear operator 
	$\delta u = (\delta u_l, \delta u_r, \delta \xs) \in \U \mapsto \delta \y \in \pcont$.
	Moreover, noting the uniform boundedness of $d_{(\bar{y}, \xs)} \bar{g}^{(\y,\xs)}$ 
	and $d_{\bar{y}} \bar{A}^{(\y, \xs)}$ from \cref{lem:GRP_sourceterm_diff} and \cref{lem:GRP_matrix_diff},
	the constant $C$ can be chosen independently of $u \in \Uad$.
\end{proof}


\begin{theorem} \label{thm:differentiability_refspace}
The operator $\Sref$ is continuously Fr\'echet differentiable as a map
\begin{equation} \label{Sref_diffability}
\Sref: \Uad \longrightarrow (\pdiff, \| \cdot \|_{\pcont}) .
\end{equation}
For any $\delta u = (\delta u_l, \delta u_r, \delta \xs) \in \U$,
the sensitivity $\delta \y = d_u \Sref(u) \cdot \delta u \in \pcont$
is the unique broad solution of \eqref{pde_dy}, \eqref{sl_dy_nodecond}.
\end{theorem}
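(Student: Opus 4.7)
The plan is to verify Fréchet differentiability at an arbitrary $u = (u_l, u_r, \xs) \in \Uad$ by comparing $\Sref(u + \delta u)$ with $\Sref(u) + \delta \y$, where $\delta \y \in \pcont$ is the unique broad solution of the linearised system \eqref{pde_dy}, \eqref{sl_dy_nodecond} furnished by \cref{lem:deltay_properties}. Writing $\y \coloneqq \Sref(u)$ and $\tilde y \coloneqq \Sref(u + \delta u)$, set $r \coloneqq \tilde y - \y - \delta \y$; the target estimate is $\|r\|_{\pcont} = o(\|\delta u\|_{\U})$. Two quantitative ingredients will be central: the Lipschitz bound $\|\tilde y - \y\|_{\pcont} \leq L_{\Sref}\|\delta u\|_{\U}$ from \cref{thm:GRP_lipschitz_pcont}, and the qualitative convergence $\|\tilde y - \y\|_{\pdiff} \to 0$ as $\delta u \to 0$ from \cref{thm:GRP_cont_pdiff}.

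The first step is to derive the semilinear integral equations satisfied by $r$ along $\bar{A}^{(\y,\xs)}$-characteristics. Evaluating the broad-solution formulations of $\tilde y$ and $\y$ along these common characteristics---absorbing the discrepancy $(\bar{A}^{(\y,\xs)} - \bar{A}^{(\tilde y,\tilde \xs)})\tilde y_{\x}$ into the source for $\tilde y$---and subtracting the defining integral equation of $\delta \y$ eliminates $\delta \y_{\x}$ entirely and shows that $r$ is the unique broad solution of
\begin{align*}
r_t + \bar{A}^{(\y,\xs)} r_{\x}
& = \bigl[\bar{g}^{(\tilde y,\tilde \xs)} - \bar{g}^{(\y,\xs)} - d_{(\y,\xs)}\bar{g}^{(\y,\xs)}\cdot (\delta \y, \delta \xs)\bigr] \\
& \quad - \bigl[(\bar{A}^{(\tilde y,\tilde \xs)} - \bar{A}^{(\y,\xs)})\tilde y_{\x} - (d_{(\y,\xs)}\bar{A}^{(\y,\xs)}\cdot (\delta \y, \delta \xs))\y_{\x}\bigr].
\end{align*}
Splitting $\tilde y - \y = \delta \y + r$ in the first bracket, the continuous Fréchet differentiability of $\bar{g}^{(\cdot,\cdot)}$ from \cref{lem:GRP_sourceterm_diff} yields an $o(\|\delta u\|_{\U})$ remainder in $\pcont$ plus a bounded linear action of $r$. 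In the second bracket, decompose $\tilde y_{\x} = \y_{\x} + (\tilde y - \y)_{\x}$: the $\y_{\x}$-part is handled analogously using \cref{lem:GRP_matrix_diff} together with the uniform bound \eqref{ref_ql_pc1_apriori_bound} on $\y_{\x}$, while the remaining term $(\bar{A}^{(\tilde y,\tilde \xs)} - \bar{A}^{(\y,\xs)})(\tilde y - \y)_{\x}$ is $o(\|\delta u\|_{\U})$ because its first factor is $O(\|\delta u\|_{\U})$ (by the Lipschitz continuity of $\bar{A}^{(\cdot,\cdot)}$) while its second factor is $o(1)$ in $\pcont$ by \cref{thm:GRP_cont_pdiff}.

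For the shock coupling I would expand \eqref{G_ij_coupling} for $\tilde y$ around $\y$, subtract the linearisation $B_i^j$ from \eqref{sl_dy_nodecond}, and re-express the boundary trace in the $l_i(\y^j)$-basis to match the functional form \eqref{F_ij_coupling}. The smoothness and small Lipschitz constants of $G_i^j$ in $(v^1, v^2)$ provided by \cref{lem:GRP_bdarycondition_small} then produce an inhomogeneity and constant part of the boundary condition for $r^j_i \coloneqq l_i(\y^j)^\top r^j$ that is $o(\|\delta u\|_{\U})$ in $C^0([0,T])$. The initial data for $r$ reduce to the Taylor remainders of $\mathbf{D}_l(u_l + \delta u_l, \xs + \delta \xs)$ and $\mathbf{D}_r(u_r + \delta u_r, \xs + \delta \xs)$ with respect to $(\delta u_l, \delta u_r, \delta \xs)$; these are $o(\|\delta u\|_{\U})$ in $C^0$ by the $C^1$-regularity of $u_l, u_r$, since $\partial_{\xs} \mathbf{D}_l(u_l,\xs)(\x) = \mathbf{D}_l(u_l',\xs)(\x) \frac{\x+\el}{\el}$ exactly matches the linear part built into $\delta \bar u_l$. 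Invoking the $\pcont$-bound of \cref{thm:sl_pc0_bound}---whose hypotheses hold for the system for $r$ by \cref{thm:existence_broadsol_sl} and \cref{rem:nodecond_yx_slassumptions}, with the Lipschitz-in-$r$ contributions absorbed into the Lipschitz constant of the source---then gives $\|r\|_{\pcont} = o(\|\delta u\|_{\U})$, so $d_u\Sref(u)\cdot \delta u = \delta \y$.

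For continuity of $u \mapsto d_u\Sref(u) \in \mathcal{L}(\U, \pcont)$, take $u^\nu \to u$ in $\Uad$. By \cref{thm:GRP_cont_pdiff} $\Sref(u^\nu) \to \Sref(u)$ in $\pdiff$, so the matrix $\bar{A}^{(\Sref(u^\nu), \xs^\nu)}$, the linearised source, and the coupling operators $B_i^{j,\nu}$ converge to their $u$-counterparts in the topologies required by \cref{thm:sl_continuous_dep_data}, uniformly over bounded sets of directions $\delta u$. Combined with the $\U$-uniform bound on $\|d_u\Sref(\cdot)\cdot \delta u\|_{\pcont}$ from \cref{lem:deltay_properties}, that continuous-dependence theorem yields $d_u\Sref(u^\nu) \to d_u\Sref(u)$ in $\mathcal{L}(\U, \pcont)$. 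I expect the main technical obstacle to be the shock bookkeeping: the $G_i^j$ mix eigenvectors $l_i(\y^j)$ and $l_i(\tilde y^j)$ evaluated at different base states, so passing from the natural trace $l_i(\tilde y^j)^\top \tilde y^j$ of $\tilde y$ to the $l_i(\y^j)$-basis used by the linearisation produces correction terms $(l_i(\tilde y^j) - l_i(\y^j))^\top \tilde y^j$ that must be controlled in $C^0([0,T])$, not merely pointwise; additionally, the non-local dependence of $\bar{A}^{(\y,\xs)}$ and $\bar{g}^{(\y,\xs)}$ on the full history of $\y$ through the space transformation \eqref{x_trafo_middle} generates further second-order remainders, which are however controlled by the continuous Fréchet differentiability of the transformation established in \cref{lema:spacetrafo_properties}.
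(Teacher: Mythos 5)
Your derivation of the Fr\'echet derivative itself follows the paper's route essentially verbatim: you form the remainder $r=\tilde y-\y-\delta\y$, identify it as the broad solution of a semilinear system whose source, interior boundary data and initial data are all $o(\|\delta u\|_{\U})$ (using \cref{lem:GRP_sourceterm_diff}, \cref{lem:GRP_matrix_diff}, \cref{thm:GRP_lipschitz_pcont}, \cref{thm:GRP_cont_pdiff} and \cref{lem:GRP_bdarycondition_small} exactly as the paper does), and close with the a priori bound of \cref{thm:sl_pc0_bound}. That part is correct, including the treatment of the term $(\bar A^{(\tilde y,\tilde\xs)}-\bar A^{(\y,\xs)})(\tilde y-\y)_{\x}$ and the basis-change corrections at the shocks.

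The gap is in the continuity of $u\mapsto d_u\Sref(u)$. You assert that the data of the linearised systems converge ``uniformly over bounded sets of directions $\delta u$'' and that \cref{thm:sl_continuous_dep_data} then gives convergence in $\mathcal{L}(\U,\pcont)$. But \cref{thm:sl_continuous_dep_data} is a statement about a single convergent sequence of data; applied naively it only yields $d_u\Sref(u^\nu)\cdot\delta u\to d_u\Sref(u)\cdot\delta u$ for each \emph{fixed} direction $\delta u$, i.e.\ strong (pointwise) convergence of the operators, not operator-norm convergence. The direction enters the linearised problem through the initial data $\delta\bar u_l^\nu,\delta\bar u_r^\nu$ and through $\delta\xs^\nu$ in the source, so the uniformity over the unit ball of $\U$ is precisely what must be proved and is not free. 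The paper closes this by a contradiction argument: assuming unit directions $\delta u^\nu$ violating operator-norm convergence, it uses the compact embedding $C^1\hookrightarrow C^0$ (Arzel\`a--Ascoli) to extract a subsequence with $\delta u_l^\nu\to\delta u_l$, $\delta u_r^\nu\to\delta u_r$ in $C^0$ and $\delta\xs^\nu\to\delta\xs$, which turns the problem back into a single convergent sequence of data to which \cref{thm:sl_continuous_dep_data} applies (note the limit direction need only be $C^0$, which suffices since the sensitivity equation is solved in the broad sense in $\pcont$). Without this compactness step your final claim does not follow from the cited results.
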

\begin{proof}
Fix $u \in \Uad$ and $\y = \Sref(u)$. 
Let $\hat u \in \U$ be sufficiently small such that
$\hat{u} \coloneqq u + \delta u \in \Uad$
and denote $\tilde{y} = \Sref(\hat{u})$. 
Moreover, let $\delta \y \in \pcont$ be the solution of \eqref{pde_dy}, \eqref{sl_dy_nodecond}.
Then $\w \coloneqq \tilde{y} - \y - \delta \y \in \pcont$
is the broad solution of
\begin{equation} \label{pde_difference_dy}
\w_t + \bar{A}^{(\bar{y}, \xs)} \w_{\x}
= \bar{q}(\w) \quad \text{on } \dom, 
\quad \w^0(0, \cdot) = \w_l, 
\quad \w^n(0, \cdot) = \w_r,
\end{equation}
with the initial states 
$ \w_l = \mathbf{D}_l(u_l + \delta u_l, \xs + \delta \xs) 
- \mathbf{D}_l(u_l, \xs) - \delta \bar{u}_l$
and analogously for $\w_r$.
The source term in functional form is given by
\begin{equation} \label{pde_difference_sourceterm}
\begin{aligned}
\bar{q}(\w) &= d_{\bar{y}} \bar{g}^{(\bar{y}, \xs)} \cdot \w
- ( d_{\bar{y}} \bar{A}^{(\bar{y}, \xs)} \cdot \w) \bar{y}_x \\
& + \bar{g}^{(\tilde{y}, \xs + \delta \xs)} - \bar{g}^{(\bar{y}, \xs)}
- d_{(\bar{y}, \xs)} \bar{g}^{(\bar{y}, \xs)} \cdot (\tilde{y} - \bar{y}, \delta \xs) \\ 
& + (\bar{A}^{(\bar{y}, \xs)} - \bar{A}^{(\tilde{y}, \xs + \delta \xs)})
( \tilde{y}_x - \bar{y}_x ) \\
& + \big( \bar{A}^{(\bar{y}, \xs)} 
+ d_{(\bar{y}, \xs)} \bar{A}^{(\bar{y}, \xs)}
\cdot (\tilde{y} - \bar{y}, \delta \xs) 
- \bar{A}^{(\tilde{y}, \xs + \delta \xs)} \big) \bar{y}_x .
\end{aligned}
\end{equation}
The interior boundary conditions read
\begin{equation} \label{Q_ij_coupling}
	\begin{aligned}
		\bar{w}^{j}_i (t, s_j t) &= Q_i^j \big( t, \bar{w}^{j-1}_i (t, s_j t),
		\bar{w}^{j-1} (t, s_j t), \bar{w}^{j} (t, s_j t)  \big) && i>j, \\
		\bar{w}^{j-1}_i (t, s_j t) &= Q_i^j \big( t, \bar{w}^{j}_i (t, s_j t),
		\bar{w}^{j-1} (t, s_j t), \bar{w}^{j} (t, s_j t) \big) && i<j,
	\end{aligned}
\end{equation}
where $\bar{w}^{j-1}_i = l_i(\bar{y}^{j-1})^\top \bar{w}^{j-1}$ and
$\bar{w}^{j}_i = l_i(\bar{y}^{j})^\top \bar{w}^{j}$.
The functions $Q_i^j$ for $i > j$ (the case $i<j$ is analogous) are given by
\begin{equation} \label{pde_difference_nodecond}
\begin{aligned}
Q_i^j(t, v, v^1, v^2) & = \partial_{2} G_i^j(\y)^\top v
+ \partial_{3} G_i^j(\y)^\top v^1
+ \partial_{4} G_i^j(\y)^\top v^2 
- (\nabla l_i(\y^j) v^2 )^\top \y^j \\
& + G_i^j(\tilde{y}) - G_i^j(\y) 
- \partial_{2} G_i^j(\y)^\top (\tilde{y}^j_i - \y^j_i) \\
& - \partial_{3} G_i^j(\y)^\top (\tilde{y}^{j-1} - \y^{j-1})
- \partial_{4} G_i^j(\y)^\top (\tilde{y}^j - \y^j) \\
& + (l_i(\y^j) - l_i(\tilde{y}^j) + \nabla l_i(\y^j) (\tilde{y}^j - \y^j) )^\top \y^j \\
& + (l_i(\y^j) - l_i(\tilde{y}^j))^\top (\tilde{y}^j- \y^j), \\
\end{aligned}
\end{equation}
with the abbreviation $G_i^j(\y) = G_i^j(t, \y_i^{j-1}, \y^{j-1}, \y^{j})$
and similarly for $G_i^j(\tilde{y})$.

Next, we apply \cref{thm:sl_pc0_bound} on
\eqref{pde_difference_dy}, \eqref{Q_ij_coupling}.
For this, we first observe that
\begin{equation}
\begin{aligned}
\w_l(\x) &= \mathbf{D}_l(u_l, \xs + \delta \xs)(\x) - \mathbf{D}_l(u_l , \xs) (\x)
- \mathbf{D}_l(u_l' , \xs) \frac{\x+\el}{\el} \delta \xs \\
& + \mathbf{D}_l(\delta u_l, \xs + \delta \xs)(\x) - \mathbf{D}_l(\delta u_l, \xs)(\x)
\end{aligned}
\end{equation}
for all $\x \in [-\el,0]$. Using \eqref{def_dilation},
this implies $\| \w_l \|_{C^0([-\el,0];\R^n)} = o(\| \delta u \|_{\U})$,
and analogously for $\w_r$.
Using $\| \tilde{y} - \y \|_{\pcont} = \mathcal{O}(\| \delta u \|_{\U})$ by
\cref{thm:GRP_lipschitz_pcont}, \eqref{pde_difference_nodecond} implies
\begin{equation*}
\| Q_i^j(\cdot, 0,0,0) \|_{C^0([0,T])} = o(\| \delta u \|_{\U}) \quad \forall i \neq j \in [n].
\end{equation*}
To bound the source term \eqref{pde_difference_sourceterm}, first apply
\cref{lem:GRP_sourceterm_diff} to obtain
\begin{equation*}
\| \bar{g}^{(\tilde{y}, \xs + \delta \xs)} - \bar{g}^{(\bar{y}, \xs)}
- d_{(\bar{y}, \xs)} \bar{g}^{(\bar{y}, \xs)} \cdot (\tilde{y} - \bar{y}, \delta \xs) \|_{\pcont}
= o(\| \delta u \|_{\U}).
\end{equation*}
Applying \cref{lem:GRP_matrix_diff} and \cref{thm:GRP_cont_pdiff} shows that
\begin{equation*}
\begin{aligned}
\| (\bar{A}^{(\bar{y}, \xs)} - \bar{A}^{(\tilde{y}, \xs + \delta \xs)})
( \tilde{y}_x - \bar{y}_x ) \|_{\pcont} & = o(\| \delta u \|_{\U}), \\
\| \bar{A}^{(\bar{y}, \xs)} 
+ d_{(\bar{y}, \xs)} \bar{A}^{(\bar{y}, \xs)}
\cdot (\tilde{y} - \bar{y}, \delta \xs) 
- \bar{A}^{(\tilde{y}, \xs + \delta \xs)} \|_{\pcont} & = o(\| \delta u \|_{\U}).
\end{aligned}
\end{equation*}
Combining the previous two bounds implies
$ \| \bar{q}(0) \|_{\pcont} = o(\| \delta u \|_{\U})$.
Finally,
\begin{equation}
\| \bar{w} \|_{\pcont} = \| \tilde{y} - \y - \delta \y \|_{\pcont} = o(\| \delta u \|_{\U})
\end{equation}
follows by application of \cref{thm:sl_pc0_bound}.
This proves the Fr\'echet differentiability of $\Sref$ in $u \in \Uad$
in the topology asserted in \eqref{Sref_diffability}.

It remains to prove that $\Sref$ is even continuously 
Fr\'echet differentiable, i.e., it holds 
\begin{equation} \label{cont_Fdiffability}
\sup_{\| \delta u \|_{\U} = 1} 
\| (d_u \Sref(\tilde{u}) - d_u \Sref(u) ) \cdot \delta u \|_{\pcont} \longrightarrow 0 
\end{equation}
for $u = (u_l, u_r, \xs) \in \Uad$ and $\tilde{u} \to u$ in $\Uad \subset \U$.

Assume for the sake of contradiction that \eqref{cont_Fdiffability} does not hold.
Then there exists a sequence 
$( \delta u^{\nu} )_{\nu \in \N} \subset \U$
with $\| \delta u^ {\nu} \|_{\U} = 1 $ and
$\delta u^{\nu} = (\delta u_l^{\nu}, \delta u_r^{\nu}, \delta \xs^{\nu})$ for all $\nu \in \N$
and another sequence 
$( u^{\nu} )_{\nu \in \N} \subset \Uad$
with $u^{\nu} = (u_l^{\nu}, u_r^{\nu}, \xs^{\nu})\longrightarrow u$
in $\U$ for $\nu \longrightarrow \infty$ 
such that for some constant $\epsilon > 0$ it holds that
\begin{equation} \label{continuous_Frechet_contra}
	\| (d_u \Sref( u^{\nu} ) - d_u \Sref(u) ) \cdot \delta u^{\nu} \|_{\pcont}
	\geq \epsilon \quad \forall \nu \in \N.
\end{equation}
By the first part of the proof,
$\delta \y^{[\nu]} \coloneqq d_u \Sref(u) \cdot \delta u^{\nu} \in \pcont$
is the broad solution of 
\begin{equation}
	\delta \y ^{[\nu]}_t + \bar{A}^{(\bar{y}, \xs)} \delta \y ^{[\nu]}_{\x}
	= \bar{a}^\nu( \delta \y ^{[\nu]} ) 
\end{equation}
with $\y = \Sref(u)$ and
the source term defined for all $\bar v \in \pcont$ by
\begin{equation} \label{sourceterm_abar}
	\bar{a}^\nu( \bar v ) \coloneqq 
	d_{(\bar{y}, \xs)} \bar{g}^{(\bar{y},\xs)} \cdot
		(\bar v , \delta \xs^\nu)
	- (d_{(\bar{y}, \xs)} \bar{A}^{(\bar{y}, \xs)} \cdot
		(\bar v , \delta \xs^\nu) ) \y_{\x} ,
\end{equation}
interior boundary conditions
\eqref{B_ij_coupling} and \eqref{sl_dy_nodecond}, 
and the initial state
\begin{equation} \label{deltay_init}
	\delta \y^{0, [\nu]}(0, \cdot) = \delta \bar{u}_l ^\nu , 
	\quad \delta \bar{u}_l^\nu(\x) 
	= \mathbf{D}_l \big( \delta u_l^\nu, \xs \big)(\x) 
	+ \mathbf{D}_l( u_l', \xs)(\x) 
	\frac{\x+\el}{\el} \delta \xs ^\nu 
\end{equation}
for all $\x \in [-\ell, 0]$.
This is analogous for $\delta \y^{n, [\nu]}(0, \cdot) = \delta \bar{u}_r ^\nu$.
Furthermore, we have that
$\delta \hat y ^{[\nu]} \coloneqq d_u \Sref(u^{\nu}) \cdot \delta u^{\nu} \in \pcont$
is the unique broad solution of
\begin{equation}
	\delta \hat y ^{[\nu]}_t 
	+ \bar{A}^{(\bar{y}^{[\nu]}, \xs^\nu)} \delta \hat y ^{[\nu]}_{\x}
= \hat{a}^\nu( \delta \hat y ^{[\nu]} ) 
\end{equation}
with $\y^{[\nu]} \coloneqq \Sref(u^\nu)$ and the source term
defined for all $\bar v \in \pcont$ by
\begin{equation} \label{sourceterm_ahat}
	\hat{a}^\nu( \bar v ) \coloneqq 
	d_{(\bar{y}, \xs)} \bar{g}^{(\bar{y}^{[\nu]}, \xs^\nu)} \cdot
	(\bar v , \delta \xs^\nu)
	- (d_{(\bar{y}, \xs)} \bar{A}^{(\bar{y}^{[\nu]}, \xs^\nu)} \cdot
	(\bar v , \delta \xs^\nu) ) \y^{[\nu]}_{\x} 
\end{equation}
The interior boundary conditions are given
for $\delta \hat y ^{j, [\nu]}_i 
= l_i(\y^{j,[\nu]}) ^\top \delta \hat y ^{j, [\nu]}$ and
\break
$ \delta \hat y ^{j-1, [\nu]}_i 
= l_i(\y^{j-1, [\nu]}) ^\top \delta \hat y ^{j-1, [\nu]}$
for all $t \in [0,T]$ and $i > j$  by
\begin{equation} \label{B_ij_nu_coupling}
	\begin{aligned}
		\delta \hat y ^{j, [\nu]}_i (t, s_j t) 
		&= B_i^{j, [\nu]} \big( t, \delta \hat y ^{j-1, [\nu]}_i (t, s_j t),
		\delta \hat y ^{j-1, [\nu]} (t, s_j t), 
		\delta \hat y ^{j, [\nu]} (t, s_j t) \big) ,
	\end{aligned}
\end{equation}
where $B_i^{j, [\nu]}$ is obtained by replacing 
$\y$ in \eqref{sl_dy_nodecond} by $\y^{[\nu]}$.
The case $i<j$ is similar.
Moreover, the initial condition is
\begin{equation} \label{deltay_init_pert}
	\delta \hat y^{0, [\nu]}(0, \cdot) = \delta \hat{u}_l ^\nu , 
	\quad \delta \hat{u}_l^\nu(\x) 
	= \mathbf{D}_l \big( \delta u_l^\nu, \xs^\nu \big)(\x) 
	+ \mathbf{D}_l \big( (u_l')^\nu, \xs^\nu \big)(\x) 
	\frac{\x+\el}{\el} \delta \xs ^\nu 
\end{equation}
for all $\x \in [-\ell, 0]$.
Since $\| \delta u^ {\nu} \|_{\U} = 1 $ for all $\nu \in \N$,
it is no restriction to assume that there exist 
$\delta u_l \in C^0( [-\ell, \varepsilon])$ and
$\delta u_r \in C^0( [- \varepsilon, \ell])$
such that
\begin{equation} \label{deltau_controlspace_compact}
	\lim_{\nu \rightarrow \infty} \| \delta u_l^\nu - \delta u_l \|_{C^0( [-\ell, \varepsilon])} = 0,
	\quad
	\lim_{\nu \rightarrow \infty} \| \delta u_r^\nu - \delta u_r \|_{C^0( [- \varepsilon, \ell])} = 0	
\end{equation}
by going to a subsequence.
Since $\delta \xs ^\nu \in [-1, 1]$ for all $\nu \in \N$,
it can also be assumed that $\lim_{\nu \rightarrow \infty} \delta \xs ^\nu  = \delta \xs$
for some $\delta \xs \in [-1,1]$.
Let $\delta \bar u_l \in C^0([-\ell, 0])$ be defined by
\begin{equation} \label{delta_u_limit_compact}
	\delta \bar u_l (\x) = 
	\mathbf{D}_l \big( \delta u_l, \xs \big)(\x) 
	+ \mathbf{D}_l \big( u_l', \xs \big)(\x) 
	\frac{\x+\el}{\el} \delta \xs 
	\quad \forall \x \in [-\ell, 0].
\end{equation}
By assumption, it holds that $\lim_{\nu \rightarrow \infty} \xs ^\nu  = \xs$ and
$\lim_{\nu \rightarrow \infty} \| (u_l')^\nu - u_l' \|_{C^0( [-\ell, \varepsilon])} = 0$.
Using the definitions \eqref{trafo_initstate}, \eqref{def_dilation} of $\mathbf{D}_l$
in \eqref{deltay_init}, \eqref{deltay_init_pert} 
shows with \eqref{deltau_controlspace_compact} that
\begin{equation}
	\delta \bar{u}_l^\nu, \, \delta \hat{u}_l^\nu
	\longrightarrow \delta \bar u_l \quad
	\text{uniformly on } [-\ell, 0] \text{ for } \nu \longrightarrow \infty.
\end{equation}
The argument is analogous for $\delta \bar{u}_r^\nu, \, \delta \hat{u}_r^\nu$ on $[0, \ell]$.

Applying \cref{lem:GRP_sourceterm_diff} and \cref{lem:GRP_matrix_diff}
to \eqref{sourceterm_abar} and
additionally \cref{thm:GRP_lipschitz_pcont}, and \cref{thm:GRP_cont_pdiff} 
to \eqref{sourceterm_ahat} implies
for all $\bar v \in \pcont$ with
$\bar a$ from \eqref{pde_dy} that
\begin{equation}
	\lim_{\nu \rightarrow \infty} 
	\| \bar a^ \nu ( \bar v)  - \bar a (\bar v) \|_{\pcont} = 0,
	\quad 
	\lim_{\nu \rightarrow \infty} 
	\| \hat a^ \nu ( \bar v)  - \bar a (\bar v) \|_{\pcont} = 0.
\end{equation}
For the interior boundary conditions 
\eqref{B_ij_coupling} and \eqref{B_ij_nu_coupling} we have that
$B_i^{j, [\nu]} \to B_i^j$ locally uniformly on 
$[0,T] \times \R \times \R^n \times \R^n$ for all $i \neq j$
by \cref{thm:GRP_lipschitz_pcont}.

Let $\delta \y \in \pcont$ be the unique broad solution of 
\begin{equation*}
	\delta \y_t + \bar{A}^{(\bar{y}, \xs)} \delta \y_{\x}
	= \bar{a}(\delta \y) 
\end{equation*}
with $\bar{a}$ from \eqref{pde_dy}, 
interior boundary conditions \eqref{B_ij_coupling} and \eqref{sl_dy_nodecond}.
The initial state is $\delta \y^0(0, \cdot) = \delta \bar u_l$ with
$\delta \bar u_l$ from \eqref{delta_u_limit_compact},
and $\delta \y^n(0, \cdot) = \delta \bar u_r$ with
$\delta \bar u_r$ defined analogously to \eqref{delta_u_limit_compact}.
\Cref{thm:sl_continuous_dep_data} implies that
\begin{equation*}
	\lim_{\nu \rightarrow \infty} 
	\| \delta \y^{[\nu]} - \delta \y \|_{\pcont} = 0,
	\quad 
	\lim_{\nu \rightarrow \infty} 
	\| \delta \hat y^{[\nu]} - \delta \y \|_{\pcont} = 0.
\end{equation*}
This is a contradiction to \eqref{continuous_Frechet_contra}
and thus \eqref{cont_Fdiffability} holds.
\end{proof}

\begin{remark}
The solution operator $\Sref$ is only differentiable in the topology of $\pcont$.
Remainder terms are not small in $\pdiff$. 
\end{remark}

\subsection{Differentiability Properties in Physical Coordinates}
\label{sec:diffphys}
We prove the differentiability of the shock
curves and the objective functional w.r.t.\ the control and
present differentiability properties of the solution operator of the GRP in physical coordinates.
We denote the solution operator in physical coordinates by
\begin{equation}
\mathcal{S} : u \in \Uad \mapsto y \in L^{\infty}(\D; \R^n)  
\quad \text{subject to } y \text{ solves \eqref{GRP}}.
\end{equation}

\begin{corollary} \label{cor:diff_shocks}
For all $j \in [n]$, the $j$-shock curve $\xi_j(u) \in C^2([0,T])$
is continuously Fr\'echet differentiable as a map
\begin{equation*}
u \in \Uad \mapsto \xi_j(u) \in C^1([0,T]). 
\end{equation*} 
For all $\delta u = (\delta u_l, \delta u_r, \delta \xs) \in \U$ 
it holds with $\delta \y = d_u \Sref(u) \cdot \delta u$
for all $t \in [0,T]$ that
\begin{equation} \label{shock_derivative}
\left( d_u \xi_j(u) \cdot \delta u \right)(t) = 
\int_0^t \big( \nabla \lambda_j ( \y^{j-1}, \y^j )
\cdot ( \delta \y^{j-1}, \delta \y^j ) \big) (\tau, s_j \tau) \intd \tau 
+ \delta \xs.
\end{equation}
\end{corollary}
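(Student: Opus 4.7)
The plan is to view the physical $j$-shock as a composition of two already-differentiated maps. By the construction in \cref{cor:sol_grp_pc1_refspace}, the $j$-shock curve in physical coordinates is the image under the space transformation of the ray $\Sigma_j = \{(t, s_j t)\}$, and a direct evaluation of \eqref{x_trafo_middle} gives $x^{(\bar y, \xs),j}(t, s_j t) = \xi_j^{(\bar y, \xs)}(t)$. Consequently, with $\y = \Sref(u)$, the physical shock curve is exactly $\xi_j(u) = \xi_j^{(\Sref(u), \xs)}$ as defined in \eqref{xij_curve_shock}, so I can write
\begin{equation*}
\xi_j : u \in \Uad \;\longmapsto\; (\Sref(u), \xs) \in \pcont \times \R \;\longmapsto\; \xi_j^{(\Sref(u), \xs)} \in C^1([0,T]).
\end{equation*}

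Next I apply the chain rule. By \cref{thm:differentiability_refspace}, the solution operator $\Sref : \Uad \to \pcont$ is continuously Fr\'echet differentiable; combined with the linear continuous projection $u \mapsto \xs$, the first map is continuously Fr\'echet differentiable from $\Uad$ into $\pcont \times \R$ with derivative $\delta u \mapsto (d_u \Sref(u) \cdot \delta u, \delta \xs)$. By \cref{lema:spacetrafo_properties}, in particular the identities \eqref{xij_Fdiff} and \eqref{Fdiff_xij}, the second map is continuously Fr\'echet differentiable from $\pcont \times \R$ into $C^1([0,T])$, with derivative
\begin{equation*}
\big( d_{(\bar y, \xs)} \xi_j^{(\bar y, \xs)} \cdot (\delta \bar y, \delta \xs) \big)(t)
= \int_0^t \nabla \lambda_j(\bar y^{j-1}, \bar y^j) \cdot (\delta \bar y^{j-1}, \delta \bar y^j)(\tau, s_j \tau) \intd \tau + \delta \xs.
\end{equation*}
Composing the two continuously Fr\'echet differentiable maps and inserting $\delta \y = d_u \Sref(u) \cdot \delta u$ into the above expression yields both the asserted continuous differentiability of $u \mapsto \xi_j(u) \in C^1([0,T])$ and formula \eqref{shock_derivative}.

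There is essentially no genuine obstacle at this stage, since all analytic work has been absorbed into \cref{thm:differentiability_refspace} and \cref{lema:spacetrafo_properties}. The one point that warrants a brief verification is that the topologies match: although $\Sref$ is differentiable only into $\pcont$ (not $\pdiff$), \cref{lema:spacetrafo_properties} requires exactly $\pcont$-regularity of $\bar y$ to produce a continuously differentiable $C^1([0,T])$-shock curve. Hence the chain rule applies cleanly in the stated topology.
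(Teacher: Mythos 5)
Your proposal is correct and is essentially the paper's own argument: the paper's proof also just combines \cref{thm:differentiability_refspace} with the Fr\'echet differentiability \eqref{xij_Fdiff}, \eqref{Fdiff_xij} of $(\bar y, \xs) \mapsto \xi_j^{(\bar y, \xs)}$ via the chain rule. Your explicit check that the topologies match ($\Sref$ differentiable into $\pcont$, and \eqref{xij_Fdiff} only requiring $\pcont$-regularity) is a worthwhile detail the paper leaves implicit.
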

\begin{proof}
The assertion follows by combining 
\cref{thm:differentiability_refspace} and \eqref{xij_Fdiff}. 
\end{proof}

For $t \in (0,T]$, let $\It = [-\el + \lambdamax t, \el - \lambdamax t]$
and denote the solution operator of \eqref{GRP} at time $t$ by
\begin{equation*}
\St : u \in \Uad \mapsto \mathcal{S}(u)(t, \cdot) \in BV(\It; \R^n).
\end{equation*}
In regions between the shock curves, $\St$
satisfies the same stability and differentiability properties
as in \cref{thm:GRP_lipschitz_pcont}, \cref{thm:GRP_cont_pdiff},
and \cref{thm:differentiability_refspace}
in the reference space.

\begin{theorem} \label{thm:phys_smooth_region}
	Let $u \in \Uad$ and $t \in (0,T]$ be arbitrary.
	Then 
	\begin{equation} \label{def_deltay_phys}	
		\delta y = \delta \y(t, \bar{x}^{(\y, \xs)}(t, \cdot)) 
		+ \y_{\x} (t, \bar{x}^{(\y, \xs)}(t, \cdot)) 
		(d_{(\y, \xs)} \bar{x}^{(\y, \xs)} \cdot (\delta \y, \delta \xs) ) (t, \cdot) 
	\end{equation}
	with $\y = \Sref(u)$ and $\delta \y = d_u \Sref(u) \cdot \delta u$
	for $\delta u \in \U$
	defines a bounded linear operator
	\begin{equation} \label{T_deltay_operator}
		\delta u \in \U \mapsto T(u) \cdot \delta u \coloneqq \delta y
		\in L^{\infty}(\It). 
	\end{equation}	
	Let $\mathcal{K} = [\alpha, \beta] \subset \It$
	and $\U ' \subset \U$ be an open set with $u \in \U' \subset \Uad$.
	Assume that there exists no $j \in [n]$
	with $\xi_j(u')(t) \in \mathcal{K}$	for any $u' \in \U'$.
	Then the map
	\begin{equation} \label{smooth_region_regulartiy}
		u' \in \U' \mapsto Y(u') \coloneqq \St (u') | _{\mathcal{K}} \in 
		\big( C^1(\mathcal{K}), \| \cdot \|_{C^m(\mathcal{K})} \big)
	\end{equation}
	is continuously Fr\'echet differentiable for $m=0$ and continuous for $m=1$.
	Moreover, $\delta Y = d_u Y(u) \cdot \delta u \in C^0(\mathcal{K})$
	is given by
	\begin{equation}
		\delta Y (x) = \delta y (x) \quad \forall x \in \mathcal{K}.
	\end{equation}
	For all $u, \, \tilde u = u + \delta u \in \Uad$ with 	
	$x_j = \xi_j(u)(t)$ and $\tilde{x}_j = \xi_j(\tilde{u})(t)$,
	it holds on the set
	\begin{equation} \label{set_C_phys_coord}
		\mathcal{C}(u, \tilde u) 
		\coloneqq \It \setminus ( \cup_{j=1}^n I(x_j, \tilde{x}_j) )
	\end{equation}		
	with
	$I(x_j, \tilde{x}_j) \coloneqq [\min(x_j, \tilde x_j), 
		\max(x_j, \tilde x_j)] \subset \It$
	that
	\begin{equation} \label{phys_diff_smoothparts}
		\|\hat{y} - y - \delta y \|_{C^0( \mathcal{C}(u, \tilde u))} 
		= o ( \| \delta u \|_{\U}) \quad \text{for }
		\| \delta u \|_{\U} \to 0. 
	\end{equation} 	
\end{theorem}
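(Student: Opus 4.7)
The plan is to transfer the reference-space result of \cref{thm:differentiability_refspace} to physical coordinates via the pointwise relation $y(t,x) = \bar y(t, \bar x^{(\bar y, \xs)}(t, x))$ and the chain rule, using the inverse-transformation properties from \cref{lema:spacetrafo_inverse_properties}. The boundedness of $T(u)$ in \eqref{T_deltay_operator} follows by combining three facts: $\delta u \mapsto \delta \y$ is bounded linear $\U \to \pcont$ by \cref{lem:deltay_properties}; $\delta u \mapsto d_{(\y, \xs)} \bar x^{(\y, \xs)} \cdot (\delta \y, \delta \xs)$ is bounded linear into $C^0(\D)$ by \eqref{inversetrafo_lipschitz}; and $\y_{\x}$ is uniformly bounded in $\pcont$ via \eqref{ref_ql_pc1_apriori_bound}. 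These combine pointwise to give the asserted $L^\infty(\It)$-bound for $\delta y$.

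For the differentiability of $Y$ on $\mathcal{K}$, the key structural observation is that, by hypothesis, no shock curve $\xi_j(u')(t)$ enters the compact set $\mathcal{K}$ for any $u' \in \U'$. Hence $\mathcal{K}$ lies entirely in a single physical sector $D^{(\Sref(u'), \xs'), j}$ with the \emph{same} index $j$ for all $u' \in \U'$, and
\begin{equation*}
Y(u')(x) = (\Sref(u'))^j \bigl(t, \bar x^{(\Sref(u'), \xs'), j}(t, x)\bigr).
\end{equation*}
Continuous Fréchet differentiability into $C^0(\mathcal{K})$ then follows from the chain rule, using \cref{thm:differentiability_refspace} for the outer map and \cref{lema:spacetrafo_inverse_properties} for the inner map, which gives $\delta Y = \delta y|_\mathcal{K}$. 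Continuity into $C^1(\mathcal{K})$ follows by differentiating in $x$ (using $\bar x_x = (x_{\x})^{-1}$ from \eqref{spacetrafo_identities}) and combining with the $\pdiff$-continuity of $\Sref$ from \cref{thm:GRP_cont_pdiff}.

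For the uniform estimate \eqref{phys_diff_smoothparts}, the crucial topological fact is that if $x \in \mathcal{C}(u, \tilde u)$, then $x$ does not lie between $\xi_j(u)(t)$ and $\xi_j(\tilde u)(t)$ for any $j$, so the \emph{same} sector index $j$ is associated with $x$ under both transformations $\bar x^{(\bar y, \xs), j}$ and $\bar x^{(\tilde{\bar y}, \tilde \xs), j}$, where $\tilde{\bar y} = \Sref(\tilde u)$. With $\bar x = \bar x^{(\bar y, \xs), j}(t, x)$, $\tilde{\bar x} = \bar x^{(\tilde{\bar y}, \tilde \xs), j}(t, x)$, and $\delta \bar x = (d_{(\y, \xs)} \bar x^{(\y, \xs), j} \cdot (\delta \y, \delta \xs))(t, x)$, I would add and subtract to obtain
\begin{align*}
\tilde y(t, x) - y(t, x) - \delta y(x)
={} & \bigl[\tilde{\bar y}_{\x}(t, \xi) - \bar y_{\x}(t, \bar x)\bigr](\tilde{\bar x} - \bar x) \\
& + \bar y_{\x}(t, \bar x)(\tilde{\bar x} - \bar x - \delta \bar x) + [\tilde{\bar y} - \bar y - \delta \bar y](t, \bar x),
\end{align*}
with $\xi$ a mean-value point between $\bar x$ and $\tilde{\bar x}$ (both in $\dom_j$). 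The last bracket is $o(\|\delta u\|_{\U})$ by \cref{thm:differentiability_refspace}; the middle term is $o(\|\delta u\|_{\U})$ uniformly in $x$ since $\bar y_{\x}$ is uniformly bounded in $\pcont$ and $\bar x^{(\y, \xs)}$ is continuously Fréchet differentiable into $C^0(\D)$ by \cref{lema:spacetrafo_inverse_properties}.

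The main obstacle is the first term of the decomposition on the nonconvex, $\tilde u$-dependent set $\mathcal{C}(u, \tilde u)$: the factor $|\tilde{\bar x} - \bar x|$ is merely $O(\|\delta u\|_{\U})$ by \eqref{inversetrafo_lipschitz} combined with \cref{thm:GRP_lipschitz_pcont}, so to obtain $o(\|\delta u\|_{\U})$ I need $|\tilde{\bar y}_{\x}(t, \xi) - \bar y_{\x}(t, \bar x)|$ to tend to zero \emph{uniformly} in $x$. This is exactly the content of the $\pdiff$-continuity of $\Sref$ provided by \cref{thm:GRP_cont_pdiff}, together with $|\xi - \bar x| \le |\tilde{\bar x} - \bar x| \to 0$; without that $\pdiff$-continuity, which is itself nontrivial because of the shock structure, this final step would collapse.
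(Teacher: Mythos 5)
Your proposal is correct and follows essentially the same route as the paper: the same chain-rule transfer to physical coordinates, the same observation that $\mathcal{C}(u,\tilde u)$ keeps $x$ in a single sector under both transformations, and the same three-term decomposition handled by \cref{thm:differentiability_refspace}, \cref{lema:spacetrafo_inverse_properties}\slash\eqref{inversetrafo_lipschitz}, and the $\pdiff$-continuity of $\Sref$ from \cref{thm:GRP_cont_pdiff}. The only cosmetic difference is that the paper writes the increment $\tilde y(t,\tilde{\bar x})-\tilde y(t,\bar x)$ in integral (rather than mean-value) form, which is the cleaner choice for the vector-valued state but changes nothing in the argument.
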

\begin{proof}
It immediately follows from \cref{thm:differentiability_refspace},
the boundedness of $\y \in \pdiff$, 
and \cref{lema:spacetrafo_inverse_properties}
that $T_s(u) \in \mathcal{L}(\U; L^{\infty}(\It) )$.
	
By \cref{cor:diff_shocks}, we can choose a constant $c > 0$ 
depending on $\mathcal{K}$ such that
$\xi_j(\tilde u)(t) \notin \mathcal{K}$ for all $\tilde u \in \Uad$ satisfying 
$\| \tilde u - u \|_{\U} \leq c$.
\Cref{lem:lemma_rescaledsolution_Dj} implies for $y = \St (u), \, \hat y = \St (\tilde u)$
with $\y = \Sref(u), \, \tilde{y} = \Sref(\tilde u)$ that 
for some $j \in [n]_0$ it holds that
\begin{equation} \label{phys_to_ref_trafo}
	y(x) = \bar{y}^j (t, \bar{x}^{(\bar{y}, \xs),j}(t,x)),
	\quad 
	\hat y(x) = \tilde{y}^j (t, \bar{x}^{(\tilde{y}, \tilde \xs),j}(t,x))
	\qquad \forall x \in \mathcal{K}.
\end{equation}

We first prove the continuity of \eqref{smooth_region_regulartiy} for $m=1$.
For this, let $\tilde u \in \Uad$ with $\| \tilde u - u \|_{\U} \leq c$ be arbitrary.
Combining \cref{lema:spacetrafo_inverse_properties} and the a-priori bound \eqref{ref_ql_pc1_apriori_bound},
the identity \eqref{phys_to_ref_trafo} shows that \eqref{smooth_region_regulartiy}
is continuous for $m=0$.
Moreover, differentiating \eqref{phys_to_ref_trafo} w.r.t.\ $x$ implies
for all $x \in \mathcal{K}$ that
\begin{equation*}  
	y_x(x) = \bar{y}_{\x} (t, \bar{x}^{(\bar{y}, \xs)}(t,x))
		\bar{x}_x^{(\bar{y}, \xs)}(t,x),
	\quad 
	\hat y_x(x) = \tilde{y}_{\x} (t, \bar{x}^{(\tilde{y}, \tilde \xs),j}(t,x))
		\bar{x}_x^{(\tilde{y}, \tilde \xs),j}(t,x).
\end{equation*}
This shows by using the identity \eqref{spacetrafo_identities},
then applying \cref{lema:spacetrafo_properties}, using \cref{thm:GRP_cont_pdiff},
and the fact that $\bar{y}_{\x}$ is uniformly continuous on the compact set 
$\mathcal{K}$, that
\begin{equation}
	\| \hat y_x - y_x \|_{C^0(\mathcal{K})} \longrightarrow 0
	\quad \text{for } \| \delta u \|_{\U} \to 0.
\end{equation}
To prove the continuous Fr\'echet differentiability
of \eqref{smooth_region_regulartiy} for $m=0$, 
it suffices to prove \eqref{phys_diff_smoothparts} since
$\mathcal{K} \subset \mathcal{C}(u, \tilde u)$ holds for all
$\tilde{u} \in \Uad$ with $\| \tilde u - u \|_{\U} \leq c$.

To show \eqref{phys_diff_smoothparts}, let $\tilde u \in \Uad$
be arbitrary and $\delta u = \tilde u - u \in \U$.
By definition of the set $\mathcal{C} = \mathcal{C}(u, \tilde u)$
in \eqref{set_C_phys_coord}, both
$\hat y = \St(\tilde{u}), \, y = \St(u)$ do not have
discontinuities in $\mathcal{C}$. 
It follows from $\delta \y, \y_{\x} \in \pcont$
that $\delta y$ from \eqref{def_deltay_phys} is continuous
on $\mathcal{C}$.
We abbreviate
$\x = \x^{(\y, \xs)}(t,x)$, $\tilde{x} = \x^{(\tilde{y}, \tilde \xs)}(t,x)$,
and $ \delta \bar{x}^{(\y, \xs)}
= ( d_{(\y, \xs)} \bar{x}^{(\y, \xs)} \cdot (\delta \y, \delta \xs) )(t, x)$
for all $x \in \mathcal{C}$ and obtain
\begin{equation} \label{phys_diff_t2}
| \hat{y} - y - \delta y | (x) 
\leq | \tilde{y}(t, \bar{x}) - \y(t, \bar{x}) - \delta \y(t, \bar{x}) | 
+ | \tilde y(t, \tilde{x}) - \tilde y(t, \x) 
- \y_{\x} (t, \x) \delta \bar{x}^{(\y, \xs)} | .
\end{equation}
\Cref{thm:differentiability_refspace} implies 
$ | \tilde{y}(t, \bar{x}) - \y(t, \bar{x}) - \delta \y(t, \bar{x}) |
\leq \| \tilde y - \y - \delta \y \|_{\pcont} = o( \| \delta u \|_{\U})$.
Since $(t,\x) , \, (t, \tilde{x}) \in \dom_j$ are located in the same 
sector $\dom_j$ by construction of $\mathcal{C}$,
it holds that
\begin{equation} \label{phys_diff_t1}
	\begin{aligned}
		& \tilde y(t, \tilde{x}) - \tilde y(t, \x) 
		- \y_{\x} (t, \x) \delta \bar{x}^{(\y, \xs)}(t, \x) \\
		& = \int_0^1 \tilde y_{\x}(t, s \x + (1-s) \tilde{x}) \ds 
		\, (\tilde x - \bar x)
		- \y_{\x} (t, \x) \delta \bar{x}^{(\y, \xs)} \\
		& = \int_0^1 \big( \tilde y_{\x}(t, s \x + (1-s) \tilde{x})
			- \y_{\x}(t, s \x + (1-s) \tilde{x}) \big) \ds 
			\, (\tilde x - \bar x) \\
		& \quad + \int_0^1 \big( \y_{\x}(t, s \x + (1-s) \tilde{x})
		- \y_{\x}(t, \x) \big) \ds \, (\tilde x - \bar x) \\
		& \quad + \y_{\x}(t, \x) 
		\big( \tilde x - \bar x - \delta \bar{x}^{(\y, \xs)} \big) .		
	\end{aligned}
\end{equation}
The estimate \eqref{inversetrafo_lipschitz} implies
$| \tilde x - \bar x | = |\x^{(\tilde{y}, \xs + \delta \xs)}(t,x) - \x^{(\bar{y}, \xs)}(t,x) |
= \mathcal{O} ( \| \delta u \|_{\U})$ and
$| \tilde x - \bar x - \delta \bar{x}^{(\y, \xs)} | = o ( \| \delta u \|_{\U})$
holds uniformly for all $x \in \mathcal{C}$ by \cref{lema:spacetrafo_inverse_properties}.
Using \cref{thm:GRP_cont_pdiff}
and that $\y_{\x} \in \pcont$ is uniformly continuous
on $\dom_j$, \eqref{phys_diff_t1} shows
\begin{equation*}
	\tilde y(t, \tilde{x}) - \tilde y(t, \x) 
	- \y_{\x} (t, \x) \delta \bar{x}^{(\y, \xs)}(t, \x) 
	= o ( \| \delta u \|_{\U}) 
\end{equation*}
uniformly for all $x \in \mathcal{C}$.
Therefore, it follows from \eqref{phys_diff_t2}
that \eqref{phys_diff_smoothparts} holds.
\end{proof}

The previous results implies that $\mathcal{S}_t$
is shift-differentiable in the sense of \cite{Ulbrich2002}.

\begin{lemma} \label{lem:shiftdiff}
Fix $t \in (0,T]$ and $u = (u_l, u_r, \xs) \in \Uad$.
We define an operator 
\begin{equation} \label{shift_derivative_operator}
	T_s(u) : \U \longrightarrow L^{r}(\It) \times \R^n 
\end{equation}
for any $r \in [1, \infty]$
which is defined for all
$\delta u \in \U$ by
$T_s(u) \cdot \delta u = (\delta y, \delta x)$,
where $\delta y = T(u) \cdot \delta u $ 
with $T(u)$ from \eqref{T_deltay_operator}, 
and $\delta x _j = ( d_u \xi_j(u) \cdot \delta u )(t)$ 
for all $j \in [n]$.
Then, \eqref{shift_derivative_operator} defines
a bounded linear operator
$T_s(u) \in \mathcal{L}(\U; L^{r}(\It) \times \R^n)$
for all $r \in [1, \infty]$. 
With the shock positions $x_j = \xi_j(u)(t)$ for all $j \in [n]$ and
\begin{equation*}
\mathbf{S}^{x}_{\St(u)} \coloneqq \delta y + 
\sum_{j=1}^n \left( \St(u)(x_j-) - \St(u)(x_j+) \right)
\mathrm{sgn}(\delta x_j) \mathbf{1}_{I(x_j, x_j + \delta x_j)}
\in L^1(\It; \R^n)
\end{equation*}
and $I(\alpha, \beta) = [\min(\alpha, \beta), \max(\alpha, \beta)]$
for $\alpha, \beta \in \R$, it holds that
\begin{equation} \label{firstorder_shiftdiff}
\| \St(u + \delta u) - \St(u) - \mathbf{S}^{x}_{\St(u)}
\|_{L^1(\It;\R^n)} = o ( \| \delta u \|_{\U}) 
\quad \text{for } \U \ni \delta u \rightarrow 0.
\end{equation}
\end{lemma}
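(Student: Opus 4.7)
I would first establish the boundedness of $T_s(u)$. By \cref{thm:phys_smooth_region}, the map $\delta u \mapsto \delta y$ with $\delta y$ given by \eqref{def_deltay_phys} belongs to $\mathcal{L}(\U, L^\infty(\It))$: indeed $\delta \y = d_u \Sref(u) \cdot \delta u \in \pcont$ by \cref{thm:differentiability_refspace}, $\y_{\x} \in \pcont$ by \eqref{ref_ql_pc1_apriori_bound}, and $(\bar y, \xs) \mapsto \bar x^{(\y,\xs)}$ and its Fr\'echet derivative are controlled via \cref{lema:spacetrafo_inverse_properties}. The bound in $L^r(\It)$ for any $r \in [1,\infty]$ follows since $\It$ is bounded. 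The shock derivatives $\delta x_j = (d_u \xi_j(u) \cdot \delta u)(t)$ are bounded linear in $\delta u$ by \cref{cor:diff_shocks}. Hence $T_s(u) \in \mathcal{L}(\U; L^r(\It) \times \R^n)$.

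For the shift-differentiability estimate \eqref{firstorder_shiftdiff}, I would decompose $\It = \mathcal{C}(u,\tilde u) \cup \bigcup_{j=1}^n I(x_j, \tilde x_j)$ and bound each contribution to $\| \St(\tilde u) - \St(u) - \mathbf{S}^x_{\St(u)} \|_{L^1(\It)}$ separately. From \cref{cor:diff_shocks} it follows that $|\tilde x_j - x_j - \delta x_j| = o(\|\delta u\|_{\U})$, so $I(x_j, \tilde x_j)$ has length $\mathcal{O}(\|\delta u\|_{\U})$ and the symmetric difference $I(x_j, \tilde x_j) \triangle I(x_j, x_j+\delta x_j)$ has length $o(\|\delta u\|_{\U})$.

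On $\mathcal{C}(u, \tilde u)$, \eqref{phys_diff_smoothparts} gives $\| \hat y - y - \delta y \|_{C^0(\mathcal{C})} = o(\|\delta u\|_{\U})$, hence $o(\|\delta u\|_{\U})$ in $L^1(\mathcal{C})$ since $|\mathcal{C}| \leq |\It|$. The indicators $\mathbf{1}_{I(x_j, x_j+\delta x_j)}$ appearing in $\mathbf{S}^x_{\St(u)}$ are, on $\mathcal{C}$, supported in $I(x_j, x_j+\delta x_j)\setminus I(x_j, \tilde x_j)$, a set of measure $o(\|\delta u\|_{\U})$, so they contribute $o(\|\delta u\|_{\U})$ in $L^1$.

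On each interval $I(x_j, \tilde x_j)$, I would use the piecewise $C^1$ regularity of $y, \hat y$ (\cref{thm:grp_sol_physcoord}) together with \cref{thm:GRP_lipschitz_pcont} and \cref{thm:GRP_cont_pdiff} to expand: for $x \in I(x_j,\tilde x_j)$ and the sign convention of the shock motion, $\hat y(x) = \St(u)(x_j -) + \mathcal{O}(\|\delta u\|_{\U})$ and $y(x) = \St(u)(x_j+) + \mathcal{O}(\|\delta u\|_{\U})$ when $\delta x_j > 0$, with the roles of the one-sided traces swapped when $\delta x_j < 0$. Thus $\hat y(x) - y(x) = \mathrm{sgn}(\delta x_j)(\St(u)(x_j-) - \St(u)(x_j+)) + \mathcal{O}(\|\delta u\|_{\U})$ pointwise. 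Since $\delta y \in L^\infty(\It)$ with $\|\delta y\|_{L^\infty} = \mathcal{O}(\|\delta u\|_{\U})$ and $|I(x_j, \tilde x_j)| = \mathcal{O}(\|\delta u\|_{\U})$, the $L^1$-norm on $I(x_j, \tilde x_j)$ of the difference $\hat y - y - \mathbf{S}^x_{\St(u)}$ is $\mathcal{O}(\|\delta u\|_{\U}^2) = o(\|\delta u\|_{\U})$, also using once more that $I(x_j,\tilde x_j) \triangle I(x_j, x_j + \delta x_j)$ has measure $o(\|\delta u\|_{\U})$. Summing over $j$ and combining with the contribution on $\mathcal{C}$ yields \eqref{firstorder_shiftdiff}.

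The main technical obstacle is reconciling the two intervals $I(x_j, \tilde x_j)$ and $I(x_j, x_j+\delta x_j)$, and handling the one-sided traces $\St(u)(x_j\pm)$ uniformly with respect to the perturbation: this requires the piecewise $C^1$-regularity up to the shock curves from \cref{thm:grp_sol_physcoord} together with the continuous dependence of the states on the reference side provided by \cref{thm:GRP_cont_pdiff}. All other ingredients are bookkeeping of $o(\|\delta u\|_{\U})$ terms.
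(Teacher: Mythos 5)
Your proposal is correct and follows essentially the same route as the paper: the same decomposition of $\It$ into $\mathcal{C}(u,\tilde u)$ and the intervals $I(x_j,\tilde x_j)$, the same use of \eqref{phys_diff_smoothparts} on the smooth region, the same $\mathcal{O}(\|\delta u\|_{\U}^2)$ bookkeeping on the shock intervals via the uniform $C^1$ bounds and the Lipschitz estimate $\|\tilde y-\y\|_{\pcont}=\mathcal{O}(\|\delta u\|_{\U})$, and the same treatment of the mismatch between $I(x_j,\tilde x_j)$ and $I(x_j,x_j+\delta x_j)$ via $|\tilde x_j-x_j-\delta x_j|=o(\|\delta u\|_{\U})$ from \cref{cor:diff_shocks}. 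The only difference is organizational (you absorb the indicator mismatch into the $\mathcal{C}$-contribution rather than isolating it as a separate summand), which is immaterial.
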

\begin{proof}
\Cref{thm:phys_smooth_region} 
and \cref{cor:diff_shocks} imply that
$T_s(u) \in \mathcal{L}(\U; L^{r}(\It) \times \R^n)$ for all $r \in [1, \infty]$.

To prove \eqref{firstorder_shiftdiff},
let $u,  \tilde{u} \coloneqq u + \delta u \in \Uad$.
For all $j \in [n]$, let $x_j = \xi_j(u)(t)$ and $\tilde{x}_j = \xi_j(\tilde{u})(t)$
denote the shock positions at time $t$.
We abbreviate $y = \St(u)$ and $\hat{y} = \St(\tilde{u})$
and denote $ \mathcal{C} = \It \setminus ( \cup_{j=1}^n I(x_j, \tilde{x}_j) ) $.
Then we obtain
\begin{equation} \label{l1norm_decomp}
\begin{aligned}
\| \hat{y} - y - \mathbf{S}^{x}_{\St(u)} \| &  _{L^1(\It)}  
 \leq \| \hat{y} - y - \delta y \|_{L^1(\mathcal{C})} 
+ \sum_{j=1}^n  \| \delta y \|_{L^1( I(x_j, \tilde{x}_j) )} \\
& + \sum_{j=1}^n  \| \hat{y} - y - 
\mathrm{sgn}(\delta x_j) ( y(x_j-) - y(x_j+) ) \|_{L^1( I(x_j, \tilde{x}_j) )} \\
& + \sum_{j=1}^n |  y(x_j-) - y(x_j+) | 
\| \mathbf{1}_{I(x_j, \tilde{x}_j)}
- \mathbf{1}_{I(x_j, x_j + \delta x_j)} \|_{L^1(\It)} ,
\end{aligned}
\end{equation}
We denote the corresponding states in the reference space by
$\y = \Sref(u)$ and $\tilde{y} = \Sref(\tilde{u})$.
Moreover, we abbreviate 
$\x = \x^{(\y, \xs)}(t,x)$ and $\tilde{x} = \x^{(\tilde{y}, \xs + \delta \xs)}(t,x)$
for all $x \in \mathcal{C}$
and $ \delta \bar{x}^{(\y, \xs)}
= (d_{(\y, \xs)} \bar{x}^{(\y, \xs)} \cdot (\delta \y, \delta \xs)(t, \x)$
with $\delta \y = d_u \Sref(u) \cdot \delta u$.
By construction of $\mathcal{C}$, we have that
$(t,\x) , \, (t, \tilde{x}) \in \dom_j$ are located in the same 
sector $\dom_j$ for some $j \in [n]_0$.

From \eqref{phys_diff_smoothparts} it immediately follows that
$\| \hat{y} - y - \delta y \|_{L^1(\mathcal{C})} = o (\| \delta u \|_{\U} )$.
For the second summand $\| \delta y \|_{L^1( I(x_j, \tilde{x}_j) )}$
in \eqref{l1norm_decomp}, we use that the shock positions
are Lipschitz continuous w.r.t.\ the control by \cref{cor:diff_shocks}.
Since $\| \delta y \|_{L^{\infty}(\It)} = \mathcal{O}(\| \delta u \|_{\U} )$
by the first part of the proof, it follows that
\begin{equation} \label{phys_diff_t12}
	\| \delta y \|_{L^1( I(x_j, \tilde{x}_j) )} 
	\leq  \| \delta y \|_{L^{\infty}(\It)} |\tilde{x}_j - \x_j| 
	= \mathcal{O}(\| \delta u \|^2_{\U} ) .
\end{equation}
For the third summand of \eqref{l1norm_decomp},
first suppose $\delta x_j = 0$. Then $|\tilde{x}_j - x_j| = o (\| \delta u \|_{\U} )$
and thus $\| \hat{y} - y \|_{L^1( I(x_j, \tilde{x}_j) )} = o (\| \delta u \|_{\U} )$
follows by the boundedness of the solution operator $\St$ in $L^\infty$.
Now suppose that $\delta x_j > 0$.
For $x \in I(x_j, \tilde{x}_j)$ it holds with
$\x = \x^{(\y)}(t,x)$ and $\tilde{x} = \x^{(\tilde{y})}(t,x)$ that
$(t, \x) \in \dom_k, (t, \tilde{x}) \in \dom_{\tilde{k}}$ 
with $\{ k, \tilde{k} \} = \{ j-1, j \}$. 
If $\| \delta u \|_{\U} > 0$ is sufficiently small,
$\tilde{x}_j - x_j - \delta x_j = o( \| \delta u \|_{\U} )$
implies that also $\tilde{x}_j > x_j$.
Therefore, we can assume that $k = j$ and $\tilde{k} = j-1$.
Since $y(x) = \y^{k}(t, \x)$ and
$\hat{y}(x) = \tilde{y}^{\tilde{k}}(t, \tilde{x})$,
we obtain with the reference shock speed $s_j$ from \eqref{def_D_i} that
\begin{equation*}
\hat{y}(x) - y(x) = \y^{k}(t, s_j t) - \y^{k}(t, \x) 
+ \tilde{y}^{\tilde{k}}(t, \tilde{x}) - \tilde{y}^{\tilde{k}}(t, s_j t)
+ \tilde{y}^{\tilde{k}}(t, s_j t) - \y^{k}(t, s_j t) .
\end{equation*}
Now using that $x \in I(x_j, \tilde{x}_j)$ yields
$|x - x_j| \leq |x_{j+1} - x_j| = \mathcal{O} (\| \delta u \|_{\U} ) $, it follows that
\begin{equation*}
	| \x - s_j t | = | \x^{(\y)}(t,x) - \x^{(\y)}(t,x_j)|
	= \mathcal{O} (\| \delta u \|_{\U} ) .
\end{equation*}
With this, we obtain that $| \y^{k}(t, s_j t) - \y^{k}(t, \x)  | = \mathcal{O} (\| \delta u \|_{\U} )$
by using the a-priori bound \eqref{ref_ql_pc1_apriori_bound}.
With the same argument, we conclude
$| \tilde{y}^{\tilde{k}}(t, \tilde{x}) - \tilde{y}^{\tilde{k}}(t, s_j t) |
 = \mathcal{O} (\| \delta u \|_{\U} )$. 
Finally, observe that for $\delta x_j > 0$ it holds that
\begin{equation*} 
| \tilde{y}^{\tilde{k}}(t, s_j t) - \y^{k}(t, s_j t) 
- \mathrm{sgn}(\delta x_j) ( y(x_j-) - y(x_j+) ) | 
\leq \| \tilde{y} - \y \| = \mathcal{O} (\| \delta u \|_{\U} ) 
\end{equation*}
since $y(x_j-) - y(x_j+) = \y^{j-1}(t, s_j t) - \y^{j}(t, s_j t)$.
The case $\delta x_j < 0$ is analogous.
Therefore, we have
$ \| \hat{y} - y - \mathrm{sgn}(\delta x_j) ( y(x_j-) - y(x_j+) ) \|_{L^\infty( I(x_j, \tilde{x}_j) )} 
= \mathcal{O} (\| \delta u \|_{\U} )$ and
\begin{equation} \label{phys_diff_t13}
	\| \hat{y} - y - \mathrm{sgn}(\delta x_j) ( y(x_j-) - y(x_j+) )
		\|_{L^1( I(x_j, \tilde{x}_j) )} 
	= \mathcal{O} (\| \delta u \|^2_{\U} ).
\end{equation}
For the last summand of \eqref{l1norm_decomp},
we use \eqref{def_set_calM} and \cref{cor:diff_shocks} to obtain
\begin{equation} \label{phys_diff_t14}
|y(x_j-) - y(x_j+) | 
\| \mathbf{1}_{I(x_j, \tilde{x}_j)}
- \mathbf{1}_{I(x_j, x_j + \delta x_j)} \|_{L^1(\It)}
\leq 2 c_y | \tilde{x}_j - x_j - \delta x_j| 
= o (\| \delta u \|_{\U} ) . 
\end{equation}
Inserting
\eqref{phys_diff_smoothparts}, \eqref{phys_diff_t12}, \eqref{phys_diff_t13}
and \eqref{phys_diff_t14}
into \eqref{l1norm_decomp} proves \eqref{firstorder_shiftdiff}.
\end{proof}

\begin{corollary} \label{cor:lipschitzL1}
For $t \in (0,T]$, the solution operator $\St$ is Lipschitz continuous
as a map $\St: \Uad \longrightarrow L^1(\It; \R^n)$.
\end{corollary}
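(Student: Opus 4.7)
The plan is to decompose the interval $\It$ into the region $\mathcal{C}(u, \tilde u)$ defined in \eqref{set_C_phys_coord}, where both $\St(u)$ and $\St(\tilde u)$ are smooth and associated with the same reference sector, and the finitely many intervals $I(x_j,\tilde x_j)$ around the shifted shock positions. On $\mathcal{C}(u,\tilde u)$ I would control $\hat y - y$ pointwise via the Lipschitz estimate in the reference space from \cref{thm:GRP_lipschitz_pcont} combined with the Lipschitz continuity of the inverse space transformation from \cref{lema:spacetrafo_inverse_properties}, while on the bad intervals I would use the Lipschitz dependence of the shock positions (\cref{cor:diff_shocks}) together with the uniform $L^{\infty}$-bound $\|\St(u)\|_{L^\infty} \leq 2 c_y$ from \eqref{def_set_calM}.

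More precisely, fix $u,\tilde u \in \Uad$ with $\delta u = \tilde u - u$, set $y = \St(u)$, $\hat y = \St(\tilde u)$, $\y = \Sref(u)$, $\tilde y = \Sref(\tilde u)$, and let $x_j=\xi_j(u)(t)$, $\tilde x_j = \xi_j(\tilde u)(t)$. For $x \in \mathcal{C}(u,\tilde u)$ the construction of $\mathcal{C}(u,\tilde u)$ guarantees that $(t,\x)$ and $(t,\tilde x)$ lie in the \emph{same} sector $\dom_k$, where $\x = \x^{(\y,\xs)}(t,x)$ and $\tilde x = \x^{(\tilde y, \tilde \xs)}(t,x)$. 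A triangle inequality gives
\begin{equation*}
|\hat y(x) - y(x)| \leq |\tilde y^k(t,\tilde x) - \y^k(t,\tilde x)| + |\y^k(t,\tilde x) - \y^k(t,\x)|
\leq \|\tilde y - \y\|_{\pcont} + \|\y_{\x}\|_{\pcont}\,|\tilde x - \x|.
\end{equation*}
Applying \cref{thm:GRP_lipschitz_pcont}, the a-priori bound \eqref{ref_ql_pc1_apriori_bound}, and \eqref{inversetrafo_lipschitz} yields $|\hat y - y|(x) \leq C\|\delta u\|_{\U}$ uniformly on $\mathcal{C}(u,\tilde u)$, for a constant $C$ independent of $u,\tilde u \in \Uad$.

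On each $I(x_j,\tilde x_j)$, \cref{cor:diff_shocks} implies $|\tilde x_j - x_j| \leq C\|\delta u\|_{\U}$, and $|\hat y - y| \leq 4 c_y$ pointwise by \eqref{def_set_calM}, so $\|\hat y - y\|_{L^1(I(x_j,\tilde x_j))} \leq 4 c_y C \|\delta u\|_{\U}$. Summing the contributions and combining with the $L^\infty$-bound on $\mathcal{C}(u,\tilde u)$ gives
\begin{equation*}
\|\St(\tilde u) - \St(u)\|_{L^1(\It;\R^n)}
\leq C\,|\It|\,\|\delta u\|_{\U} + 4 n c_y C \|\delta u\|_{\U},
\end{equation*}
which is the desired global Lipschitz estimate. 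The main (but mild) obstacle is verifying the same-sector property on $\mathcal{C}(u,\tilde u)$, which however is built into the definition of $\mathcal{C}(u,\tilde u)$ and only requires that the $n$ shock curves retain their order, a property already encoded in the entropy condition \eqref{entropy_cond_Abar} and the minimal-angle bound \eqref{minimal_angle_refspace}.
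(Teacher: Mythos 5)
Your proof is correct, but it takes a different route from the paper. The paper's own proof is a one-liner: it invokes the shift-differentiability estimate \eqref{firstorder_shiftdiff} from \cref{lem:shiftdiff}, namely $\| \St(u + \delta u) - \St(u) - \mathbf{S}^{x}_{\St(u)}\|_{L^1} = o(\|\delta u\|_{\U})$, together with the observation that $\|\mathbf{S}^{x}_{\St(u)}\|_{L^1} = \mathcal{O}(\|\delta u\|_{\U})$, and concludes by the triangle inequality. You instead rebuild the decomposition of $\It$ into $\mathcal{C}(u,\tilde u)$ and the shock intervals $I(x_j,\tilde x_j)$ from scratch — which is essentially the internal machinery of the proof of \cref{lem:shiftdiff} (cf.\ \eqref{l1norm_decomp}) run only at the level of first-order $\mathcal{O}(\|\delta u\|_{\U})$ bounds, without the finer $o(\cdot)$ remainder analysis. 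Both arguments are valid. What your version buys is self-containedness and a transparently \emph{uniform} Lipschitz constant over $\Uad$ (since $L_{\Sref}$, the $PC^1$ a-priori bound \eqref{ref_ql_pc1_apriori_bound}, the constant in \eqref{inversetrafo_lipschitz}, and $c_y$ are all uniform), whereas the paper's asymptotic $o(\|\delta u\|_{\U})$ statement is, strictly read, pointwise in $u$ and yields the global constant only because all underlying estimates are uniform anyway. What the paper's version buys is brevity: once \cref{lem:shiftdiff} is proved, nothing new is needed. Two minor remarks on your write-up: the mean-value step $|\y^k(t,\tilde x) - \y^k(t,\x)| \leq \|\y_{\x}\|_{\pcont}\,|\tilde x - \x|$ is legitimate because $\dom_k \cap (\{t\}\times\R)$ is an interval, so the segment stays in $\dom_k$ — worth stating; and the crude bound $|\hat y - y|\leq 4c_y$ could be $2c_y$, though this is immaterial.
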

\begin{proof}
The assertion immediately follows from \eqref{firstorder_shiftdiff} and the observation that
$\| \mathbf{S}^{x}_{\St(u)} \|_{L^1(\It;\R^n)} = \mathcal{O} ( \| \delta u \|_{\U})$.
\end{proof}

\begin{theorem} \label{thm:diff_objective}
Fix $t \in (0,T]$ and $\mathcal{I} \coloneqq [a,b] \subset \It$.
Let $\Phi \in C^{1,1}(\R^n \times \R^n)$ and $y_d \in L^{\infty}(\mathcal{I}; \R^n)$.
Moreover, let $\bar{u} \in \Uad$ and $x_j = \xi_j(\bar{u})(t)$ for all $j \in [n]$
and define
\begin{equation} \label{thm_objective_fctn}
u \in \Uad \mapsto J(u) \coloneqq \int_a^b \Phi \left( \St(u)(x), y_d(x) \right) \dx .
\end{equation}
\begin{enumerate}
	\item If $y_d$ is approximately continuous in all $x_j$, 
		$J$ is Fr\'echet differentiable at $\bar{u}$.	
	\item If at least one $x_j$ is an approximate discontinuity of $y_d$,
	$J$ is directionally differentiable at $\bar{u}$.
	\item If $y_d$ is continuous in a neighborhood of all $x_j$,
	$J$ is continuously Fr\'echet differentiable at $\bar{u}$.
\end{enumerate}
\end{theorem}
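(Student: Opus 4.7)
The plan is to expand $J(\bar u + \delta u) - J(\bar u) = \int_a^b [\Phi(\hat y(x), y_d(x)) - \Phi(y(x), y_d(x))]\dx$ with $\hat y = \St(\bar u + \delta u)$, $y = \St(\bar u)$, and to split the integration domain into the smooth part $\mathcal{C} \cap [a,b]$, with $\mathcal{C}$ from \eqref{set_C_phys_coord}, and the shock intervals $I(x_j, \tilde x_j)$ with $\tilde x_j = \xi_j(\bar u + \delta u)(t)$. On $\mathcal{C} \cap [a,b]$, \cref{thm:phys_smooth_region} yields $\|\hat y - y - \delta y\|_{C^0(\mathcal{C})} = o(\|\delta u\|_\U)$ with $\delta y = T(\bar u)\cdot \delta u$, so a first-order Taylor expansion of $\Phi$ combined with the $L^\infty$-boundedness of $\St$ and the $C^{1,1}$-regularity of $\Phi$ yields
\begin{equation*}
\int_{\mathcal{C}\cap[a,b]} [\Phi(\hat y, y_d) - \Phi(y, y_d)] \dx
= \int_{\mathcal{C}\cap[a,b]} \partial_1 \Phi(y, y_d)^\top \delta y \dx + o(\|\delta u\|_\U).
\end{equation*}
Since $\delta y$ is bounded in $L^\infty(\It)$ uniformly and $|[a,b]\setminus \mathcal{C}| = O(\|\delta u\|_\U)$ by \cref{cor:diff_shocks}, extending the inner integral back to $[a,b]$ only costs an $O(\|\delta u\|_\U^2)$-remainder.

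For each shock interval $I(x_j, \tilde x_j)$, I would argue as in the proof of \cref{lem:shiftdiff}: the reference-space representation from \cref{lem:lemma_rescaledsolution_Dj} together with the uniform a-priori bound \eqref{ref_ql_pc1_apriori_bound} in $\pdiff$ shows that $|y(x) - y(x_j\pm)|, |\hat y(x) - y(x_j\mp)| = O(\|\delta u\|_\U)$ uniformly on $I(x_j, \tilde x_j)$, where the sign is selected by $\mathrm{sgn}(\delta x_j)$ and $\delta x_j = (d_u \xi_j(\bar u)\cdot \delta u)(t)$. Using the local Lipschitz continuity of $\Phi$ and $|\tilde x_j - x_j - \delta x_j| = o(\|\delta u\|_\U)$ from \cref{cor:diff_shocks}, the contribution from $I(x_j,\tilde x_j)$ reduces, modulo $o(\|\delta u\|_\U)$, to
\begin{equation*}
\mathrm{sgn}(\delta x_j) \int_{I(x_j,\, x_j + \delta x_j)} [\Phi(y(x_j-), y_d(x)) - \Phi(y(x_j+), y_d(x))] \dx.
\end{equation*}

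The case distinction then follows from Lebesgue differentiation applied to this integral. In case (i), approximate continuity of $y_d$ at $x_j$ with value $\bar y_d(x_j)$ together with the $L^\infty$-bound on $y_d \in BV$ yields $\int_{I(x_j, x_j + \delta x_j)} |y_d(x) - \bar y_d(x_j)|\dx = o(\delta x_j)$, so the shock contribution becomes $\delta x_j\,[\Phi(y(x_j-), \bar y_d(x_j)) - \Phi(y(x_j+), \bar y_d(x_j))] + o(\|\delta u\|_\U)$, which is linear in $\delta u$ via the representation \eqref{shock_derivative} of $\delta x_j$. In case (ii), the left- and right-approximate limits $\bar y_d^\pm(x_j)$ are distinct at some $x_j$, and the same estimate produces a shock contribution that is only positively homogeneous in $\delta u$ because the relevant approximate one-sided limit of $y_d$ switches with $\mathrm{sgn}(\delta x_j)$. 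In case (iii), continuity of $y_d$ on a neighborhood of each $x_j$ turns the approximate limits into ordinary values that depend continuously on $x_j$, and combining continuity of $\bar u \mapsto x_j$ from \cref{cor:diff_shocks}, of $\bar u \mapsto (y(x_j-), y(x_j+))$ from \cref{thm:GRP_cont_pdiff}, and of $\bar u \mapsto T(\bar u)$ in the smooth-region term from \cref{thm:differentiability_refspace}, yields continuity of the derivative in $\bar u$.

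The main obstacle will be making the shock-interval reduction fully quantitative: one needs the error in replacing $y(x)$ by $y(x_j\pm)$ and $\hat y(x)$ by $y(x_j\mp)$ on $I(x_j, \tilde x_j)$ to be $O(\|\delta u\|_\U)$, so that after multiplication by a bounded Lipschitz constant of $\Phi$ and integration over a set of measure $O(\|\delta u\|_\U)$ the total error is $O(\|\delta u\|_\U^2) = o(\|\delta u\|_\U)$. This relies on the uniform $\pdiff$-bound \eqref{ref_ql_pc1_apriori_bound} on the reference-space solutions together with the Lipschitz estimate \eqref{inversetrafo_lipschitz} for the inverse space transformation, and is essentially the same bookkeeping already carried out in the proof of \cref{lem:shiftdiff}.
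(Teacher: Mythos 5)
Your proposal is correct, but it takes a genuinely different route from the paper. The paper's proof is essentially two lines: parts (1) and (2) are obtained by applying \cite[Lemma 2.3]{Ulbrich2002} to the shift-differentiability of $\St$ established in \cref{lem:shiftdiff}, and part (3) follows from the additional observation that $u \mapsto T_s(u) \in \mathcal{L}(\U; L^r(\It) \times \R^n)$ is continuous for $r \in (1,\infty)$; the external lemma encapsulates exactly the computation you carry out by hand. What you do instead is unpack that lemma in this concrete setting: you split $[a,b]$ into $\mathcal{C}(u,\tilde u)$ and the shock intervals, Taylor-expand $\Phi$ on the smooth part using \eqref{phys_diff_smoothparts}, reduce each shock interval to $\mathrm{sgn}(\delta x_j)\int_{I(x_j,x_j+\delta x_j)}[\Phi(y(x_j-),y_d)-\Phi(y(x_j+),y_d)]\dx$ via the uniform $O(\|\delta u\|_\U)$ replacement of $y$ and $\hat y$ by the one-sided traces (the same bookkeeping as in the proof of \cref{lem:shiftdiff}), and then invoke approximate continuity of $y_d$ to extract the linear, respectively positively homogeneous, limit. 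The bookkeeping is sound: the error terms are $O(\|\delta u\|_\U^2)$ or $o(\|\delta u\|_\U)$ as you claim, the sign bookkeeping correctly explains why the derivative is only positively homogeneous at an approximate jump of $y_d$, and your continuity argument for case (3) matches the paper's. What your approach buys is a self-contained proof that makes the mechanism of the shock contribution explicit and shows precisely where approximate (dis)continuity of $y_d$ enters; what it costs is redoing work that \cite{Ulbrich2002} already provides in abstract form for arbitrary shift-differentiable maps. One minor point: for case (2) you implicitly use that the one-sided approximate limits $y_d^{\pm}(x_j)$ exist, which is guaranteed for $y_d \in BV$ as in \eqref{objective_fcntal} but not for general $y_d \in L^\infty$ as stated in the theorem; this is a definitional matter inherited from the notion of ``approximate discontinuity'' and affects the paper's own proof equally.
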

\begin{proof}
The first two properties directly follow from
\cref{lem:shiftdiff}, \cite[Lemma 2.3]{Ulbrich2002}.
The third property follows from the observation that 
$T_s(u) \in \mathcal{L}(\U; L^{r}(\It) \times \R^n)$
is continuous w.r.t.\ $u$ for all $r \in (1, \infty)$.
Applying \cite[Lemma 2.3]{Ulbrich2002} proves the assertion.
\end{proof}

For the next result, we canonically identify $C^0(\R; \R^n)^* \cong ( \mathcal{M}(\R) )^n$ 
with the space $\mathcal{M}(\R)$ of Borel measures
and denote by $\mathcal{M}(\R)^n - w^*$ the induced weak*-topology.

\begin{theorem}
For all $t \in (0,T]$, $\St$ is Fr\'echet differentiable as a map
\begin{equation*}
\St: \Uad \longrightarrow \mathcal{M}(\R)^n - w^* .
\end{equation*}
Let $u \in \Uad$, $y = \St(u)$, and $x_j = \xi_j(u)(t)$.
Then, $\delta \mu = d_u \St(u) \cdot \delta u$ is given by
\begin{equation} \label{delta_mu}
\delta \mu = \delta y + 
\sum_{j=1}^n \big( ( d_u \xi_j(u) \cdot \delta u )(t) \big) 
\left( y(x_j-) - y(x_j+) \right) \delta(\cdot - x_j)
\in \mathcal{M}(\R)^n
\end{equation}
with $\delta y$ as in \eqref{def_deltay_phys}, 
and $\delta(\cdot)$ denoting a Dirac measure centered in $0$.
\end{theorem}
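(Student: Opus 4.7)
The plan is to reduce the statement to the $L^1$-shift-differentiability in Lemma \ref{lem:shiftdiff} by a test-function argument, combined with the elementary fact that $\tfrac{1}{|\delta x_j|}\mathbf{1}_{I(x_j, x_j + \delta x_j)}\intd x$ concentrates to the Dirac measure $\delta(\cdot - x_j)$ as $\delta x_j \to 0$.

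First I would verify that $\delta u \mapsto \delta \mu$ from \eqref{delta_mu} defines a bounded linear operator from $\U$ into $\mathcal{M}(\R)^n$ (with the total-variation norm, hence also continuous into the weak-* topology). Linearity is immediate from the linearity of $d_u \Sref(u)$ (Theorem \ref{thm:differentiability_refspace}) and of each $d_u \xi_j(u)$ (Corollary \ref{cor:diff_shocks}); boundedness follows from the $L^\infty$-bound on $\delta y$ provided by Theorem \ref{thm:phys_smooth_region}, the linear bound $|\delta x_j| = O(\|\delta u\|_{\U})$ from Corollary \ref{cor:diff_shocks}, and the uniform pointwise bound $|y(x_j-) - y(x_j+)| \leq 2 c_y$ from \eqref{def_set_calM}.

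To establish Fréchet differentiability into $\mathcal{M}(\R)^n - w^*$, I would show that for every fixed $\phi \in C^0_c(\R; \R^n)$, the scalar quantity $\langle \St(u + \delta u) - \St(u) - \delta \mu, \phi \rangle$ is $o(\|\delta u\|_{\U})$. Splitting it as
\begin{equation*}
\int_{\It} \phi^\top \big( \St(u + \delta u) - \St(u) - \mathbf{S}^x_{\St(u)} \big) \intd x
+ \Big( \int_{\It} \phi^\top \mathbf{S}^x_{\St(u)} \intd x - \langle \delta \mu, \phi \rangle \Big),
\end{equation*}
the first summand is bounded by $\|\phi\|_{L^\infty} \| \St(u + \delta u) - \St(u) - \mathbf{S}^x_{\St(u)} \|_{L^1(\It;\R^n)} = o(\|\delta u\|_{\U})$ by Lemma \ref{lem:shiftdiff}. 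The $\delta y$-parts of $\mathbf{S}^x_{\St(u)}$ and $\delta \mu$ cancel, leaving the second summand equal to
\begin{equation*}
\sum_{j=1}^n \big( y(x_j-) - y(x_j+) \big)^\top \Big( \int_{x_j}^{x_j + \delta x_j} \phi(x) \intd x - \delta x_j \, \phi(x_j) \Big)
\end{equation*}
with $\delta x_j = (d_u \xi_j(u) \cdot \delta u)(t)$, after invoking the sign identity $\mathrm{sgn}(\delta x_j) \int_{I(x_j, x_j + \delta x_j)} \phi \intd x = \int_{x_j}^{x_j + \delta x_j} \phi \intd x$. Continuity of $\phi$ at $x_j$ yields $\int_{x_j}^{x_j + \delta x_j} \phi \intd x = \phi(x_j)\, \delta x_j + o(|\delta x_j|)$, and combined with $|\delta x_j| = O(\|\delta u\|_{\U})$ this delivers the desired $o(\|\delta u\|_{\U})$ bound.

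I do not anticipate any real obstacle: the substantive analytic work has already been carried out in Lemma \ref{lem:shiftdiff}, and the passage from the indicator $\mathbf{1}_{I(x_j, x_j + \delta x_j)}$ to the Dirac mass $\delta(\cdot - x_j)$ is purely an elementary continuity argument at each shock position. The only bookkeeping point is to keep the sign conventions consistent so that the indicator-interval integrals produce the signed contribution $\delta x_j \phi(x_j)$ expected by \eqref{delta_mu}.
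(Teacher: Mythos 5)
Your proposal is correct, and it takes a genuinely different (and more economical) route than the paper. The paper does not invoke the $L^1$-estimate \eqref{firstorder_shiftdiff} as a black box here; instead it re-decomposes $\langle f, \hat{y}-y-\delta\mu\rangle$ from scratch over the set $\mathcal{C}(u,\tilde u)$ and the intervals $I(x_j,\tilde x_j)$ built from the \emph{true} perturbed shock positions $\tilde x_j=\xi_j(\tilde u)(t)$, and then re-derives the near-shock estimates (the terms $I_1,I_2,I_3$, using uniform continuity of $f$, the argument behind \eqref{phys_diff_t13}, and $\tilde x_j-x_j-\delta x_j=o(\|\delta u\|_{\U})$). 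You instead insert and subtract $\mathbf{S}^{x}_{\St(u)}$, absorb all of that work into the single bound $\|\phi\|_{C^0}\|\St(u+\delta u)-\St(u)-\mathbf{S}^{x}_{\St(u)}\|_{L^1}=o(\|\delta u\|_{\U})$ from \cref{lem:shiftdiff}, and are left only with the elementary identity
\begin{equation*}
\mathrm{sgn}(\delta x_j)\int_{I(x_j,x_j+\delta x_j)}\phi\,\dx-\delta x_j\,\phi(x_j)
=\int_{x_j}^{x_j+\delta x_j}\bigl(\phi-\phi(x_j)\bigr)\dx=o(|\delta x_j|),
\end{equation*}
which is exactly the passage from the smeared indicator to the Dirac mass. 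This buys a shorter proof that cleanly isolates where the measure-valued derivative differs from the $L^1$ shift-derivative, at the cost of hiding the structure of the remainder near the shocks inside \cref{lem:shiftdiff}; your boundedness argument for $\delta u\mapsto\delta\mu$ (even in total-variation norm, slightly stronger than the dual-pairing bound the paper records) is also fine. The one bookkeeping point you already flagged --- the sign convention in the oriented integral --- is handled correctly, so I see no gap.
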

\begin{proof}
Due to \eqref{def_deltay_phys}, $\delta y$ depends linearly on $\delta u$.
For any $f \in C^0(\R; \R^n)$ it holds with 
$f = (f_i)_{i \in [n]}$ with $f_i \in C^0(\R)$ and
the dual pairing
$ \langle f, \mu \rangle = \sum_{i=1}^n \int_{\R} f_i \intd \mu_i$ that
\begin{equation*}
\begin{aligned}
| \langle f, \delta \mu \rangle |
& \leq \| f \|_{C^0} \| \delta y \|_{L^1} 
+ 2 \sum_{j=1}^n | (d_u \xi_j(u) \cdot \delta u )(t) | \| \Sref(u) \|_{\pcont} |f(x_j)| \\
& \leq \| f \|_{C^0} \| T_s(u) \|_{\mathcal{L}(\U; L^{1}(\It) \times \R^n)} \| \delta u \|_{\U} 
(1 + 2 c_y)
\end{aligned}
\end{equation*}	
with $T_s(u)$ from \cref{lem:shiftdiff}
and the a-priori bound \eqref{def_set_calM}.
Therefore, \eqref{delta_mu} defines a bounded linear operator 
$(\delta u \mapsto \delta \mu) \in \mathcal{L}\left( \U; \mathcal{M}(\R)^n - w^* \right)$.
Let $y = \St(u)$ and $\hat{y}=\St(u + \delta u)$ with $u, u + \delta u \in \Uad$
and fix $f \in C_c^0(\R; \R^n)$. 
With the notation as in the proof of \cref{lem:shiftdiff}
and the set $\mathcal{C} = \mathcal{C}(u, \tilde u)$ from \eqref{set_C_phys_coord},
we obtain with $\delta x_j = (d_u \xi_j(u) \cdot \delta u )(t)$ that
\begin{equation} \label{mwstar_bound}
\begin{aligned}
\langle f,  \hat{y} - y - \delta \mu \rangle
& = \int_{\mathcal{C}} f^\top (\hat{y} - y - \delta y ) 
- \sum_{j=1}^n \int_{I(x_j, \tilde{x}_j)} f ^\top  \delta y \\
& + \sum_{j=1}^n \Big( \int_{I(x_j, \tilde{x}_j)} f ^\top (\hat{y} - y ) 
- \delta x_j f(x_j) ^\top \left( y(x_j-) - y(x_j+) \right) \Big).
\end{aligned}
\end{equation}
With \eqref{phys_diff_smoothparts} and 
$\| \delta y \|_{L^\infty} = \mathcal{O} ( \| \delta u \|_{\U})$,
it follows that
\begin{equation} \label{mwstar_bound_line1}
\Big| \int_{\mathcal{C}} f^\top (\hat{y} - y - \delta y ) 
- \sum_{j=1}^n \int_{I(x_j, \tilde{x}_j)} f ^\top  \delta y \Big|
= o ( \| \delta u \|_{\U}).
\end{equation}
Now fix $j \in [n]$. If $\delta x_j > 0$ and $\| \delta u \|_{\U}$ is small,
then also $\tilde{x}_j > x_j$ and it follows that
\begin{align*}
& \int_{I(x_j, \tilde{x}_j)} f ^\top (\hat{y} - y )
- \delta x_j  f(x_j) ^\top \left( y(x_j-) - y(x_j+) \right) \\
& = \int_{x_j}^{\tilde{x}_j}  (f - f(x_j))  ^\top (\hat{y} - y ) \\
& \quad + f(x_j) ^\top  \int_{x_j}^{\tilde{x}_j} \left( \hat{y} - y  - (y(x_j-) - y(x_j+) ) \right) \\
& \quad + (\tilde{x}_j - x_j - \delta x_j ) f(x_j) ^\top \left( y(x_j-) - y(x_j+) \right) \\
& \eqqcolon I_1 + I_2 + I_3.
\end{align*}
It holds that
$\max_{x \in I(x_j, \tilde{x}_j)} | f(x) - f(x_j)| \longrightarrow 0$
for $\delta u \to 0$
since $f$ is uniformly continuous on the compact interval $\It$.
With $|\tilde{x}_j - x_j| = \mathcal{O} ( \| \delta u \|_{\U})$,
this implies $|I_1| = o ( \| \delta u \|_{\U})$.
With the same argument as in \eqref{phys_diff_t13}, also
$|I_2| = o ( \| \delta u \|_{\U})$ holds.
Furthermore, $|I_3| = o ( \| \delta u \|_{\U})$ is obvious.
Inserting this and \eqref{mwstar_bound_line1} into \eqref{mwstar_bound}
proves $| \langle f,  \hat{y} - y - \delta \mu \rangle | = o ( \| \delta u \|_{\U})$.
The case $\delta x_j  < 0$ is analogous.
If $\delta x_j  = 0$, the assertion follows analogously to the
proof of \cref{lem:shiftdiff} by exploiting $|\tilde{x}_j - x_j| = o ( \| \delta u \|_{\U})$.
\end{proof}

\section{Conclusion}
In this paper, we proved that the piecewise
smooth entropy solution of the Generalized Riemann Problem is differentiable
w.r.t.\ the piecewise smooth parts
and the position of the discontinuity of the initial state
after transforming it into a reference space. 
An immediate consequence was the differentiability of the 
position of shock curves w.r.t.\ the controls.
By combining these results, the differentiability of a
large class of tracking-type functionals followed.

The results of this paper are limited to small time horizons
with no shock formations or interactions.
Moreover, it does not allow rarefaction waves. 
This is subject to future research.
An adjoint gradient representation for the objective functional
extending \cite{Ulbrich2003} to hyperbolic systems is currently under development.


\bibliographystyle{siamplain}
\bibliography{references}
\end{document}